\title[Yau's Gradient Estimates]{Yau's Gradient  Estimates  on Alexandrov Spaces}
\author{Hui-Chun Zhang}
\address{Department of Mathematics\\  Sun Yat-sen University\\ Guangzhou 510275\\ E-mail address: zhanghc3@mail.sysu.edu.cn}
\author{Xi-Ping Zhu}
\address{Department of Mathematics\\  Sun Yat-sen University\\ Guangzhou 510275\\ E-mail address: stszxp@mail.sysu.edu.cn}
\newtheorem{thm}{Theorem}[section]
\newtheorem{prop}[thm]{Proposition}
\newtheorem{lem}[thm]{Lemma}
\newtheorem{cor}[thm]{Corollary}
\theoremstyle{definition}
\theoremstyle{remark}
\newtheorem{defn}[thm]{Definition}
\newtheorem{rem}[thm]{Remark}
\numberwithin{equation}{section}
\newcommand{\ls}{\leqslant}
\newcommand{\gs}{\geqslant}
\newcommand{\wa}{\widetilde\angle}
\newcommand{\ip}[2]{\left<{#1},{#2}\right>}
\newcommand{\rv}{{\rm vol}}
\newcommand{\be}[2]{\begin{#1}{#2}\end{#1}}
\newcommand{\R}{\mathbb{R}}
\newcommand{\M}{\mathbb{M}}
\begin{document}

\begin{abstract}In this paper, we establish a  Bochner type formula on Alexandrov spaces with Ricci curvature bounded below.
 Yau's gradient estimate for harmonic functions is also obtained on Alexandrov spaces.
\end{abstract}

\maketitle

\section{Introduction}
 The study of harmonic functions on Riemannian manifolds has been one of the basic topic
in geometric analysis.  Yau in \cite{y75} and Cheng--Yau in \cite{cy75} proved the following well known  gradient estimate for harmonic functions
on smooth manifolds (see also \cite{sy94}).
\begin{thm} \label{thm1.1}{\rm (Yau \cite{y75}, Cheng--Yau \cite{cy75})}\indent
Let $M^n$ be an $n$-dimensional complete noncompact Riemannian manifold with Ricci curvature bounded from below by $-K$, $(K\gs0)$.
 Then there exists a constant $C_n$, depending only on $n$, such that every positive harmonic function $u$ on $M^n$ satisfies
  $$|\nabla \log u|\ls C_n(\sqrt K+\frac{1}{R})$$
   in any ball $B_p(R).$
\end{thm}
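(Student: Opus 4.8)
The plan is to carry out Yau's original argument: a Bochner-type estimate for $|\nabla\log u|^2$, localized by a cutoff function and closed off by the maximum principle together with the Laplacian comparison theorem. First I would reduce the statement to a bound on $w:=|\nabla f|^2$, where $f:=\log u$. Since $u>0$ is harmonic, $\nabla f=\nabla u/u$ and $\Delta f=\Delta u/u-|\nabla u|^2/u^2=-|\nabla f|^2=-w$. It thus suffices to prove $\sup_{B_p(R)}w\ls C_n(K+R^{-2})$; the theorem then follows by taking square roots and using $\sqrt{a+b}\ls\sqrt a+\sqrt b$.

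Next I would produce a differential inequality for $w$. The Bochner formula applied to $f$ gives
$$\tfrac12\Delta w=|\nabla^2 f|^2+\langle\nabla f,\nabla\Delta f\rangle+\mathrm{Ric}(\nabla f,\nabla f).$$
Substituting $\Delta f=-w$, the hypothesis $\mathrm{Ric}\gs-K$, and the Cauchy--Schwarz bound $|\nabla^2 f|^2\gs\tfrac1n(\Delta f)^2=\tfrac1n w^2$, one obtains
$$\tfrac12\Delta w\gs\tfrac1n w^2-\langle\nabla f,\nabla w\rangle-Kw.$$
(Using a refined Kato inequality one can enlarge the coefficient $\tfrac1n$ and sharpen the final constant, but this is not needed for the qualitative statement.)

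I would then localize. Fix $p$ and set $\psi:=\eta(d(\cdot,p)/R)$, where $\eta\colon[0,\infty)\to[0,1]$ is smooth with $\eta\equiv1$ on $[0,1]$, $\eta\equiv0$ on $[2,\infty)$, $\eta'\ls0$, $|\eta''|\ls C$, and $(\eta')^2/\eta\ls C$. The function $G:=\psi w$ attains its maximum over $\overline{B_p(2R)}$ at some interior point $x_0$. If $x_0$ lies on the cut locus of $p$, Calabi's trick (shifting the basepoint slightly toward $x_0$ along a minimizing geodesic) replaces $d(\cdot,p)$ near $x_0$ by a smooth upper barrier agreeing with it at $x_0$, so one may assume $\psi$ is smooth near $x_0$; since $\eta$ is nonincreasing this only helps. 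At $x_0$ one has $\nabla G=0$, i.e.\ $\nabla w=-(w/\psi)\nabla\psi$, and $\Delta G\ls0$, i.e.\ $\psi\Delta w+2\langle\nabla\psi,\nabla w\rangle+w\Delta\psi\ls0$. The Laplacian comparison theorem --- the second place the curvature hypothesis enters --- gives $\Delta d\ls C_n(R^{-1}+\sqrt K)$ on the region $R\ls d\ls 2R$ where $\eta'\neq0$, which together with $\eta'\ls0$ yields the lower bound $\Delta\psi\gs-C_n(R^{-2}+\sqrt K\,R^{-1})$ on $B_p(2R)$.

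Inserting the Bochner inequality and the two relations from the maximum principle, and dividing by $w(x_0)>0$ (if $w(x_0)=0$ there is nothing to prove), gives at $x_0$
$$\tfrac2n\,\psi w\ls 2w^{1/2}|\nabla\psi|+2\psi K+2\psi^{-1}|\nabla\psi|^2-\Delta\psi.$$
The step I expect to be the main obstacle is closing this estimate: the term $2w^{1/2}|\nabla\psi|$ must be absorbed into the left-hand side without producing an extra factor of $\psi$, so that one is left with a bound on $G(x_0)=\psi(x_0)w(x_0)$ rather than on $\psi(x_0)^2w(x_0)$. This is achieved by Young's inequality, $2w^{1/2}|\nabla\psi|=2(\psi^{1/2}w^{1/2})(\psi^{-1/2}|\nabla\psi|)\ls\tfrac1n\psi w+n\psi^{-1}|\nabla\psi|^2$, which leaves
$$\tfrac1n\,G(x_0)\ls (n+2)\,\psi^{-1}|\nabla\psi|^2+2\psi K-\Delta\psi\quad\text{at }x_0.$$
Since $\psi^{-1}|\nabla\psi|^2=(\eta')^2/(R^2\eta)\ls C/R^2$, $\psi K\ls K$, and $-\Delta\psi\ls C_n(R^{-2}+\sqrt K\,R^{-1})\ls C_n(R^{-2}+K)$, one gets $G(x_0)\ls C_n(K+R^{-2})$. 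Because $\psi\equiv1$ on $B_p(R)$, every $x\in B_p(R)$ satisfies $w(x)=G(x)\ls G(x_0)\ls C_n(K+R^{-2})$; taking square roots gives $|\nabla\log u|\ls C_n(\sqrt K+R^{-1})$ on $B_p(R)$, as desired. The two places demanding genuine care are this absorption step and the construction of $\eta$ so that every error term carries the correct power of $R$.
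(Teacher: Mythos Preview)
Your proposal is correct and matches the classical approach the paper itself sketches in the introduction: apply the Bochner formula to $\log u$ to obtain $\Delta Q\gs\frac{2}{n}Q^2-2\langle\nabla\log u,\nabla Q\rangle-2KQ$ with $Q=|\nabla\log u|^2$, then multiply by a cutoff and close via the maximum principle. The paper does not give a full proof of Theorem~1.1 (it is a cited classical result), and your detailed argument is exactly the standard implementation of that sketch, including the Calabi trick at the cut locus and the Laplacian comparison bound on $\Delta\psi$.

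One remark worth making: while your argument for the smooth case (Theorem~1.1) coincides with the paper's outlined strategy, the paper explicitly abandons the maximum principle when proving the Alexandrov-space analogue (Theorems~1.3 and~6.4), replacing it with a Nash--Moser iteration. The reason is structural: on an Alexandrov space the Bochner inequality is available only in the weak (distributional) sense, and there is no pointwise second-order calculus to run the maximum principle on $\psi Q$ at a maximum point. So your approach is the right one for the statement at hand, but it does not transfer to the paper's main setting --- which is precisely why the paper develops the integral-estimate route in Section~6.
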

 A direct consequence of the gradient estimate is  the Yau's Liouville theorem which states  a
positive harmonic function on a complete Riemannian manifold of nonnegative Ricci
curvature must be constant.

The main purpose of this paper is to extend the Yau's estimate to Alexandrov spaces. Roughly speaking, an Alexandrov space
with curvature bounded below is a length space  $X$  with the   property that  any geodesic triangle in $X$ is ``fatter"
 than the corresponding one in the associated  model space. The seminal paper \cite{bgp92} and the 10th chapter in the
 book \cite{bbi01} provide
introductions to Alexandrov geometry.

Alexandrov spaces (with curvature bounded below) generalize successfully the notion of lower bounds of sectional curvature
 from Riemannian manifolds to metric spaces. In the last few years, several notions for the ¡°Ricci curvature bounded below¡± on general
metric spaces appeared. Sturm \cite{s06-2} and Lott--Villani \cite{lv09,lv07}, independently, introduced a
so called curvature-dimension condition on  metric measure spaces, denoted by $CD(K,n)$.  The curvature-dimension condition
implies a generalized Brunn--Minkowski inequality (hence also Bishop--Gromov  comparison and Bonnet--Myer's theorem) and a
 Poincar\'e inequality (see \cite{s06-2,lv09,lv07}).
 Meanwhile, Sturm  \cite{s06-2} and Ohta  \cite{o07}
introduced a measure contraction property, denoted by $ MCP(K,n)$, which is a slight modification of a property
introduced earlier by Sturm in \cite{s98} and in a similar form by Kuwae and Shioya in \cite{ks01,ks03}. The condition
 $MCP(K,n)$ also implies Bishop--Gromov comparison, Bonnet--Myer's theorem and a Poincar\'e inequality (see \cite{s06-2,o07}).

In the framework of Alexandrov spaces, Kuwae--Shioya in \cite{ks07} introduced an infinitesimal version of the Bishop--Gromov
 comparison condition, denoted by $BG(K,n)$. On an $n-$dimensional Alexandrov space with its Hausdorff measure,
  the condition $BG(K,n)$ is equivalent to $MCP(K,n)$ (see \cite{ks07}). Under the condition $BG(0,n)$, Kuwae--Shioya
 in \cite{ks07} proved a topological splitting theorem of Cheeger--Gromoll type. In \cite{zz10}, the authors introduced
 a notion of ``Ricci curvature has a lower bound  $K$", denoted by $Ric\gs K$,  by averaging the second variation of
 arc-length (see \cite{petu98}). On an $n$-dimensional Alexandrov space $M$, the condition $Ric\gs K$ implies that $M$
 (equipped  its Hausdorff measure) satisfies $CD(K,n)$ and $BG(K,n)$ (see \cite{petu09} and Appendix in \cite{zz10}).
 Therefore, Bishop--Gromov comparison and a  Poincar\'e inequality hold on Alexandrov spaces with Ricci curvature bounded below.
  Furthermore,  under this Ricci curvature condition, the authors  in \cite{zz10} proved an \emph{isometric} splitting theorem
   of Cheeger--Gromoll type and the maximal diameter theorem of Cheng type. Remark that all of these generalized notions of
   Ricci curvature bounded below are equivalent to the classical one on smooth Riemannian manifolds.

 Let $M$ be an Alexandrov space. In \cite{os94}, Ostu--Shioya established a $C^1$-structure and a corresponding $C^0$-Riemannian
  structure on the set of regular points of $M$. Perelman  in \cite{p-dc} extended it to a $DC^1$-structure and a corresponding $BV_{loc}^0$-Riemannian structure. By applying this $DC^1$-structure, Kuwae--Machigashira--Shioya in \cite{kms01} introduced
   a canonical Dirichlet form on $M$. Under a $DC^1$ coordinate system and written the $BV_{loc}^0$-Riemannian metric by
   $(g_{ij}),$ a harmonic functions $u$  is a solution of the equation \begin{equation}\label{eq1.1}
 \sum_{i,j=1}^n\partial_i\big(\sqrt{g}g^{ij}\partial_j u\big)=0
 \end{equation}
 in the sense of distribution,  where $g=\det{(g_{ij})}$ and $(g^{ij})$ is the inverse matrix of $(g_{ij}).$ By adapting
  the standard Nash--Moser iteration argument, one knows that a harmonic function must be locally  H\"older continuous.
   More generally, in a metric space with a doubling measure and a Poincar\'e inequality for upper gradient, the same
    regularity assertion still holds for Cheeger-harmonic functions, (see \cite{c99,kshan01} for the details).

The classical  Bernstein trick   in PDE's implies that any harmonic function on smooth Riemannian manifolds is actually
 locally Lipschitz continuous. In the language of differential geometry, one can use Bochner formula to bound the
  gradient of a harmonic function on smooth manifolds. The well known Bochner formula states that for any $C^3$
   function $u$ on a smooth $n$-dimensional Riemannian manifold, there holds
\begin{equation}\label{eq1.2}
\Delta|\nabla u|^2=2|\nabla^2 u|^2+2\ip{\nabla u}{\nabla \Delta u}+2 Ric(\nabla u,\nabla u ).
\end{equation}
 But for singular spaces (including Alexandrov spaces), one meets serious  difficulty to study the Lipschitz continuity
  of harmonic function. Firstly, due to the lacking of the notion of second order derivatives, the Bernstein trick does
  not work directly on singular spaces. Next one notes the singular set might be dense in an Alexandrov space. When
   one considers the partial differential equation \eqref{eq1.1} on an Alexandrov space, the coefficients $\sqrt{g}g^{ij}$
   might be not well defined and not continuous on a dense subset. It seems that all PDE's approaches fail to give
    the Lipschitz continuity for the (weak) solutions of \eqref{eq1.1}.

 The first result for the Lipschitz continuity of harmonic functions on Alexandrov spaces was announced by Petrunin
 in \cite{petu03}. In \cite{petu96}, Petrunin developed an argument based on the second variation formula of arc-length
  and Hamitlon--Jacobi shift, and sketched a proof to the Lipschitz continuity of harmonic functions on Alexandrov
   spaces with nonnegative curvature, which is announced in \cite{petu03}. In the present paper, a detailed exposition
    of Petrunin's proof is contained in Proposition \ref{prop5.3} below. Furthermore, we will prove the Lipschitz
    continuity of solutions of general Poisson equation, see Corollary \ref{cor5.5} below.
 In \cite{krs03}, Koskela--Rajala--Shanmugalingam  proved that the same regularity of Cheeger-harmonic functions on
 metric measure spaces, which supports an Ahlfors regular measure, a Poincar\'e inequality and a certain heat kernel
  condition. In the same paper, they gave an example to show that, on a general metric metric supported a doubling
  measure  and a Poincar\'e inequality, a harmonic function
might fail to be Lipschitz continuous. In \cite{zz10-2}, based on the Lipschitz continuity of harmonic functions and
  a representation of heat kernel in \cite{kms01}, we proved that every solution of  heat equation on an Alexandrov
  space must be Lipschitz continuous. Independently, in \cite{gko10}, by applying the contraction property of gradient
   flow of the relative entropy in $L^2$--Wasserstein space, Gigli--Kuwada--Ohta also obtained the Lipschitz continuity
    of solutions of heat equation on Alexandrov spaces.

Yau's gradient estimate in the above Theorem \ref{thm1.1} is an improvement of the classical Bernstein gradient estimate.
 To extend Yau's estimates to Alexandrov spaces, let us recall what is  its proof  in smooth case. Consider a positive
  harmonic function $u$ on an $n$-dimensional Riemannian manifold. By  applying  Bochner formula \eqref{eq1.2} to $\log u$,
  one has
 $$\Delta Q\gs \frac{2}{n}Q^2-2\ip{\nabla \log u}{\nabla Q}-2KQ,$$
  where $Q=|\nabla \log u|^2.$    Let $\phi$ be a cut-off
  function. By applying maximum  principle to the smooth function $\phi Q$, one can get the desired gradient estimate
   in Theorem \ref{thm1.1}. In this proof, it is crucial  to exist the positive quadratic term $\frac{2}{n}Q^2$ on
    the RHS of the above inequality.

Now let us consider an $n$-dimensional Alexandrov space $M$ with $Ric\gs -K$. In \cite{gko10}, Gigli--Kuwada--Ohta
 proved a weak form of the $\Gamma_2$-condition
 $$\Delta |\nabla u|^2\gs 2\ip{\nabla u}{\nabla\Delta u}-2K|\nabla u|^2, \quad {\rm for\ all}\quad u\in D(\Delta)\cap W^{1,2}(M).$$
  This is a weak version of Bochner  formula. If we use the formula to $\log u$ for a positive harmonic function $u$, then
   $$\Delta Q\gs -2\ip{\nabla \log u}{\nabla Q}-2KQ,$$
    where $Q=|\nabla \log u|^2$. Unfortunately, this does not suffice  to derive the Yau's estimate because the positive
     term $\frac{2}{n}Q^2$ vanishes. The first result in this paper is the following Bochner type formula which keeps the
      desired positive quadratic term.
\begin{thm}\label{thm1.2}Let $M$ be an $n$-dimensional Alexandrov space with Ricci curvature bounded from below by $-K$,
and $\Omega$ be a bounded domain in $M$.
 Let  $f(x,s):\Omega\times[0,+\infty)\to \R$ be a Lipschitz function  and satisfy the following:\\
\indent (a)\indent  there exists a zero measure set $\mathcal N\subset \Omega$ such that for all $s\gs0$, the functions $f(\cdot,s)$ are differentiable at any $x\in \Omega\backslash \mathcal N;$\\
\indent (b)\indent the function $f(x,\cdot)$ is of class $C^1$ for  all $x\in \Omega$  and the function $\frac{\partial f}{\partial s}(x,s)$ is continuous, non-positive on $\Omega\times [0,+\infty)$.

Suppose that $u$ is  Lipschitz on $\Omega$ and
  $$-\int_{\Omega}\ip{\nabla u}{\nabla \phi}d\rv=\int_{\Omega}\phi\cdot f\big(x,|\nabla u|^2\big)\rv $$
 for all Lipschitz function $\phi$ with compact support in $\Omega$.

Then we have $|\nabla u|^2\in W^{1,2}_{loc}(\Omega) $ and
 \begin{equation*}\begin{split}
 -\int_{\Omega}&\ip{\nabla \varphi}{|\nabla u |^2}d\rv\\
 & \gs 2\int_{\Omega}\varphi\cdot\Big(\frac{f^2(x,|\nabla u|^2)}{n}+\ip{\nabla u}{\nabla f(x,|\nabla u|^2)}-K|\nabla u|^2\Big)d\rv
 \end{split}
\end{equation*}
for all  Lipschitz function $\varphi\gs0$ with compact support in $\Omega$,  provided  $|\nabla u|$ is lower semi-continuous at almost all $x\in \Omega$  $($That is, there exists a representative of $|\nabla u|$, which is lower semi-continuous at almost all $x\in \Omega.)$.
\end{thm}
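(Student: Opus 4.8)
The plan is to adapt Petrunin's method---based on the Hamilton--Jacobi (gradient) shift and the second variation of arc-length---from the harmonic case to the present quasilinear one, tracking in particular where the ``trace'' term $f^2/n$ comes from.

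\emph{Reduction.} Since $f$ is Lipschitz and $u$ is Lipschitz, $h:=f\big(\cdot,|\nabla u|^2\big)\in L^\infty_{loc}(\Omega)$ and $u$ is a weak solution of $\Delta u=h$; by Corollary~\ref{cor5.5} $u$ is locally Lipschitz. I would fix $B\subset\subset\Omega$ and work in a Perelman $DC^1$-chart on the (full-measure) set of regular points, carrying the $BV_{loc}^0$-metric $(g_{ij})$. It then suffices to establish $|\nabla u|^2\in W^{1,2}(B')$ for every $B'\subset\subset B$, together with
\[
-\int_\Omega\ip{\nabla\varphi}{\nabla|\nabla u|^2}d\rv\ \gs\ 2\int_\Omega\varphi\Big(\tfrac{h^2}{n}+\ip{\nabla u}{\nabla h}-K|\nabla u|^2\Big)d\rv
\]
for all Lipschitz $\varphi\gs0$ compactly supported in $B$ (note $\nabla h=\nabla f(x,|\nabla u|^2)$, so this is exactly the asserted inequality; a partition of unity then gives the general case).

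\emph{The Bochner inequality via second variation (the core).} Fix such a $\varphi$. Following Petrunin, I would subtract an auxiliary strictly concave function so as to replace $u$ by a genuinely semiconcave perturbation, obtaining a Lipschitz Hamilton--Jacobi shift $\Phi^t(x)$ (moving $x$ a short distance $t$ along the geodesic issuing in the direction $\nabla u$), with a controlled Jacobian $J_t$ for $(\Phi^t)_*\rv$. One then expands, to second order in $t$, an integral quantity built from $|\nabla u|^2\circ\Phi^t$, $\varphi\circ\Phi^{-t}$ and $J_{-t}$; reorganizing the first- and second-order terms by means of the equation $\Delta u=h$ and an integration by parts yields the displayed inequality, in which: the left side $-\int\ip{\nabla\varphi}{\nabla|\nabla u|^2}d\rv$ and the term $2\int\varphi\ip{\nabla u}{\nabla h}d\rv$ come from the first-order data together with the PDE; the term $-2K\int\varphi|\nabla u|^2d\rv$ is precisely the contribution of the \emph{averaged} second variation of arc-length, i.e.\ the defining property of $Ric\gs-K$ from \cite{zz10}; and the term $2\int\varphi\,\tfrac{h^2}{n}d\rv$ is produced by a Cauchy--Schwarz (trace) inequality on the second-variation/Jacobian tensor along the $n$ transverse directions of the shift---equivalently, when $u$ is additionally known to be $DC$, by the classical bound $|\nabla^2u|^2\gs(\mathrm{tr}\,\nabla^2u)^2/n=h^2/n$. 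The hypothesis that $|\nabla u|$ has a representative lower semicontinuous a.e.\ is used here to pass to the limit $t\to0^+$ on the correct (Fatou) side and to match the limiting integrand with that representative; the sign condition $\partial f/\partial s\ls0$ of (b) is used to control the cross-term in $\nabla h=\nabla_xf+\tfrac{\partial f}{\partial s}\nabla|\nabla u|^2$ and to run the comparison principle for the regularized equations.

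\emph{Sobolev regularity, and the main obstacle.} Taking $\varphi=\eta^2$ for a cutoff $\eta$ in the one-sided bound produces a Caccioppoli estimate $\int_{B'}\big|\nabla|\nabla u|^2\big|^2d\rv<\infty$, whence $|\nabla u|^2\in W^{1,2}_{loc}(\Omega)$; a mollification of $u$ in the $DC^1$-chart followed by a limit removes the apparent circularity in the integrations by parts. The genuinely delicate step is the second-variation expansion itself: on a singular space there are no smooth Jacobi fields, so one must invoke Petrunin's quasigeodesic second-variation inequality in integrated form and then verify that the remainder is $o(t^2)$ in $L^1_{loc}$, that the singular set and the (null but a priori uncontrolled) set of ``bad'' directions carry none of the second-order term, and that its lower bound survives the passage $t\to0^+$. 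It is exactly these last points that the lower-semicontinuity assumption on $|\nabla u|$ is designed to secure.
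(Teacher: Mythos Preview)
Your outline touches the right ingredients, but the mechanism you propose for the crucial $f^2/n$ term is not available on an Alexandrov space, and the roles of hypothesis (b) and of the a.e.\ lower semicontinuity are misidentified. There is no pointwise Hessian for a Lipschitz solution $u$, so one cannot invoke $|\nabla^2u|^2\gs(\mathrm{tr}\,\nabla^2u)^2/n$, nor is there a ``second-variation/Jacobian tensor'' on which to run Cauchy--Schwarz directly. The paper's substitute is a \emph{one-parameter family} of supersolution estimates for the Hopf--Lax regularization $u_t=Q_tu$ (Proposition~\ref{prop5.3}): for every $a>0$,
\[
a^2\,\mathscr L_{u_t}\ \ls\ \Big[f\circ F_t+\frac{n(a-1)^2}{t}+\frac{Kt}{3}(a^2+a+1)|\nabla u_t|^2\Big]\cdot\rv,
\]
where $a$ is the scaling ratio between the perturbations at the two endpoints $x^o$ and $y^o=F_t(x^o)$ in the second-variation estimate. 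For the absolutely continuous part $\Delta u_t$ this becomes a quadratic inequality $A_1a^2+A_2a+A_3\gs0$ valid for all $a>0$; \emph{optimizing in $a$} (at $a=-A_2/(2A_1)$) is exactly what produces, after dividing by $t$ and letting $t\to0^+$, the term $f^2/n$. Your plan, which works only with a single shift $\Phi^t$ and a second-order Taylor expansion of $|\nabla u|^2\circ\Phi^t$, contains no replacement for this optimization step.

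The conditions $\partial f/\partial s\ls0$ and a.e.\ lower semicontinuity of $|\nabla u|$ are used \emph{before} any limit: together they make $x\mapsto f\big(x,g(x)\big)$ (with $g=\max\{\mathrm{Lip}^2u,|\nabla u|^2\}$) upper semicontinuous a.e., which is precisely the hypothesis on $f$ required to run Proposition~\ref{prop5.3}. They are not a Fatou device at $t\to0^+$, and no comparison principle for ``regularized equations'' is used. A second use of $\partial f/\partial s\ls0$, via $g\circ F_t\gs|\nabla u_t|^2$ from Lemma~\ref{lem5.6}(i), lets one drop a sign-controlled error after the optimization in $a$. Finally, your Caccioppoli step is circular as written: one needs $|\nabla u|^2\in W^{1,2}_{loc}$ before the quantity $-\int\ip{\nabla\varphi}{\nabla|\nabla u|^2}$ even makes sense. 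In the paper the regularity comes first, from a uniform bound on $\big\|\nabla\big((u_t-u)/t\big)\big\|_{L^2}$ (the ``Claim'' inside the proof of Theorem~\ref{bochner}), obtained from the $a=1$ case of Proposition~\ref{prop5.3} together with a Caccioppoli inequality for the difference quotient; weak $W^{1,2}$-convergence of $(u_{t_j}-u)/t_j$ to $-\tfrac12|\nabla u|^2$ then simultaneously yields $|\nabla u|^2\in W^{1,2}_{loc}$ and allows passage to the limit in $\mathscr L_{(u_{t_j}-u)/t_j}(\phi)$.
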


Instead of the maximum principle argument in the above proof of Theorem \ref{thm1.1}, we will adapt a Nash--Moser
 iteration method to establish the following Yau's gradient estimate, the second result of this paper.
\begin{thm}\label{thm1.3}
Let $M$ be an $n$-dimensional Alexandrov space with Ricci curvature bounded from below by $-K$ $(K\gs0),$ and let
 $\Omega$ be a bounded domain in $M$.  Then there exists a constant $C=C\big(n,\sqrt K{\rm diam}(\Omega)\big)$ such
  that every positive harmonic function $u$ on $\Omega$ satisfies
$$\max_{x\in B_p(\frac{R}{2})}|\nabla \log u|\ls C(\sqrt K+\frac{1}{R})
 $$
for any ball $B_p(R)\subset \Omega.$ If $K=0$, the constant $C$ depends only on $n$.
\end{thm}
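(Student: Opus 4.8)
The plan is to derive the gradient estimate from the Bochner-type inequality of Theorem \ref{thm1.2} applied to $v=\log u$, combined with a Nash--Moser iteration. First I would set $v=\log u$; since $u$ is a positive harmonic function, $v$ satisfies the equation $-\int_\Omega\ip{\nabla v}{\nabla\phi}\,d\rv=\int_\Omega\phi\,|\nabla v|^2\,d\rv$, i.e.\ it is a solution of the integral identity in Theorem \ref{thm1.2} with $f(x,s)=s$, which is Lipschitz, differentiable in $x$ a.e.\ (using that $u$ is locally Lipschitz by Petrunin's theorem, cited as Proposition \ref{prop5.3}), and $C^1$ with $\partial_s f\equiv1$ — wait, the hypothesis requires $\partial_s f\le 0$; instead one takes $f(x,s)=s$ directly in the computation since the sign condition was only needed to control an error term, or more cleanly one verifies the conclusion of Theorem \ref{thm1.2} still holds here because $u$ harmonic makes $f$ independent of $x$. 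In any case, setting $Q=|\nabla v|^2=|\nabla\log u|^2$, Theorem \ref{thm1.2} gives the weak differential inequality
\begin{equation*}
-\int_\Omega\ip{\nabla\varphi}{\nabla Q}\,d\rv\gs 2\int_\Omega\varphi\Big(\tfrac{Q^2}{n}+\ip{\nabla v}{\nabla Q}-KQ\Big)\,d\rv
\end{equation*}
for all Lipschitz $\varphi\gs0$ with compact support, and $Q\in W^{1,2}_{loc}$.

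Next I would run the De Giorgi--Nash--Moser iteration on this inequality. The standard device is to test against $\varphi=\eta^2 Q^p$ (for a cut-off $\eta$ and exponent $p\ge1$), producing on the left a term $\int\eta^2 Q^{p-1}|\nabla Q|^2$ and on the right the crucial positive term $\tfrac{2}{n}\int\eta^2 Q^{p+2}$ together with the gradient terms $\int\eta^2 Q^p\ip{\nabla v}{\nabla Q}$ and the curvature term $-2K\int\eta^2 Q^{p+1}$. The middle term is handled by Cauchy--Schwarz and Young's inequality, bounding $|\ip{\nabla v}{\nabla Q}|\ls |\nabla v|\,|\nabla Q|=Q^{1/2}|\nabla Q|$ and absorbing part of it into $\int\eta^2 Q^{p-1}|\nabla Q|^2$, at the cost of an extra $\int\eta^2 Q^{p+2}$-type term with a small coefficient; this is where one uses that $n<\infty$ so the good term $\tfrac{2}{n}Q^{p+2}$ dominates. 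After these absorptions one arrives at a reverse-Poincar\'e / Caccioppoli inequality of the form $\int\eta^2|\nabla(Q^{(p+1)/2})|^2 + \int\eta^2 Q^{p+2}\ls C(p)\int(|\nabla\eta|^2+\eta^2)(Q^{p+1}+Q^{p})$, and then the Sobolev (or, on Alexandrov spaces, Poincar\'e plus doubling, available since $Ric\gs-K$ implies $BG(-K,n)$ hence a local Sobolev inequality by the results cited in the introduction) inequality upgrades the integrability of $Q$ from $L^{p+1}$ to $L^{\chi(p+1)}$ with $\chi=\tfrac{\nu}{\nu-2}>1$ the Sobolev exponent of the $n$-dimensional space.

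Iterating over $p_k$ with $p_k+1=\chi^k(p_0+1)$ and summing the resulting telescoping product of constants yields an $L^\infty$ bound: $\sup_{B_p(R/2)}Q\ls C\,\fint_{B_p(R)}Q\,d\rv$ up to lower-order terms $C(\sqrt K+1/R)^2$ coming from the $-2KQ$ term and the $|\nabla\eta|^2\sim 1/R^2$ scaling. It remains to bound the mean value $\fint_{B_p(R)}|\nabla\log u|^2\,d\rv$; this is the classical Cheng--Yau trick: test the equation for $v=\log u$ against $\varphi=\eta^2$ to get $\int\eta^2 Q\,d\rv=-\int 2\eta\ip{\nabla\eta}{\nabla v}\,d\rv\ls \tfrac12\int\eta^2 Q+2\int|\nabla\eta|^2\,d\rv$, hence $\int_{B_p(R)}Q\,d\rv\ls C\,\rv(B_p(2R))/R^2$, so $\fint_{B_p(R)}Q\ls C/R^2$ after using doubling. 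Combining, $\sup_{B_p(R/2)}|\nabla\log u|^2\ls C(n,\sqrt K\,\mathrm{diam}(\Omega))(K+1/R^2)$, which is the claim; when $K=0$ all constants above involve only $n$ and the doubling/Sobolev constants, which for $Ric\gs0$ (i.e.\ $BG(0,n)$) depend only on $n$.

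The main obstacle I anticipate is \emph{not} the iteration machinery itself but justifying that the test functions $\eta^2 Q^p$ are admissible in the weak inequality of Theorem \ref{thm1.2}: $Q=|\nabla\log u|^2$ is only known to lie in $W^{1,2}_{loc}$ and to be lower semicontinuous a.e., not to be bounded or Lipschitz a priori, so $Q^p$ need not be an admissible test function and the products need not be integrable. The standard fix is a truncation argument — work with $Q_L=\min\{Q,L\}$, establish the Caccioppoli estimate with $Q$ replaced by $Q_L$ using that $\nabla Q_L=\nabla Q\cdot\mathbf 1_{\{Q<L\}}$, carry out the iteration to get a bound on $\sup Q_L$ independent of $L$, and then let $L\to\infty$. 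One must be careful that the positive term $\tfrac{2}{n}Q^2$ survives the truncation with the right sign (it does, since on $\{Q<L\}$ it equals $\tfrac2n Q_L^2$ and on $\{Q\ge L\}$ it only helps), and that the a.e.\ lower semicontinuity hypothesis is genuinely used to pass from the $L^\infty$-in-mean bound to a pointwise bound everywhere on $B_p(R/2)$.
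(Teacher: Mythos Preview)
Your proposal is correct and follows the same route as the paper: apply the Bochner-type formula of Theorem~\ref{thm1.2} to $v=\log u$ and Nash--Moser iterate the resulting differential inequality for $Q=|\nabla v|^2$. Two small corrections and one methodological difference are worth noting. First, the sign: since $\mathscr L_u=0$ one computes $\mathscr L_v=-|\nabla v|^2\cdot\rv$, so the right choice is $f(x,s)=-s$ (not $+s$), and then $\partial_s f=-1\ls 0$ so the hypothesis of Theorem~\ref{thm1.2} is satisfied directly with no workaround; correspondingly the cross term in the Bochner inequality is $-\ip{\nabla v}{\nabla Q}$, though the sign is immaterial once you apply Cauchy--Schwarz. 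Second, your admissibility worry is unfounded here: Corollary~\ref{cor5.5} (which you already invoke) makes $u$, hence $v=\log u$, locally Lipschitz, so $Q$ is locally bounded and $\eta^2 Q^p$ is a legitimate Lipschitz test function---no truncation is needed. The one genuine difference is how the iteration is seeded: rather than testing the equation for $v$ against $\eta^2$ to get an $L^1$ bound on $Q$ and then iterating upward, the paper (Lemma~\ref{lem6.2}) tests the Bochner inequality with $\psi^{2s}Q^{s-2}$ for $s\gs 2n+4$ and uses the quadratic term $\tfrac{2}{n}\int\psi^{2s}Q^s$ together with H\"older to obtain in one stroke
\[
\|Q\|_{L^s(B_p(R))}\ls \Big(2nK+\frac{8ns}{R^2}\Big)\cdot\rv\big(B_p(2R)\big)^{1/s},
\]
which already has the right $(K+R^{-2})$ scaling and then serves as the seed for a standard Moser iteration (Theorem~\ref{thm6.4}). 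Your $L^1$ seed would also work, but note the paper's Caccioppoli step closes only for $s\gs 2n+4$, so you would need the extra $Q^{p+2}$ term on the left to bridge the gap from low to high exponents; the paper's one-shot $L^s$ bound avoids that bookkeeping.
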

 We also obtain  a global version of the above gradient estimate.
\begin{thm}\label{thm1.4}
 Let $M$ be as above and $u$ be a positive harmonic function on $M$. Then we have $$|\nabla\log u|\ls C_{n,K}$$
 for some constant $C_{n,K}$ depending only on $n,K$.
\end{thm}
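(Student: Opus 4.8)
The plan is to reduce the global statement to the local estimate of Theorem~\ref{thm1.3}, exploiting the fact that the constant there depends on the domain only through the quantity $\sqrt K\,{\rm diam}(\Omega)$, which is automatically controlled on a ball of fixed radius no matter where it is centered. Note first that one cannot simply let $R\to\infty$ in Theorem~\ref{thm1.3} with $\Omega=B_p(2R)$ when $K>0$, because the constant $C\big(n,\sqrt K\,{\rm diam}(\Omega)\big)$ may grow with ${\rm diam}\big(B_p(2R)\big)$ if $M$ is noncompact; instead one localizes.

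So fix an arbitrary point $q\in M$; it suffices to bound $|\nabla\log u|$ at $q$ (the estimate of Theorem~\ref{thm1.3} being a genuine pointwise supremum of the representative used there). Set $\Omega:=B_q(4)$. Since $M$ is a complete length space, $\Omega$ is a bounded domain in $M$, and by the triangle inequality ${\rm diam}(\Omega)\ls 8$, a bound independent of $q$. The restriction of $u$ to $\Omega$ is a positive harmonic function, so Theorem~\ref{thm1.3} applied with the ball $B_q(2)\subset\Omega$ (that is, $R=2$) gives
\[
|\nabla\log u|(q)\ \ls\ \max_{x\in B_q(1)}|\nabla\log u|\ \ls\ C\big(n,8\sqrt K\big)\Big(\sqrt K+\tfrac12\Big)=:C_{n,K}.
\]
Here $C(n,8\sqrt K)$ depends only on $n$ and $K$, and only on $n$ when $K=0$; since $q$ was arbitrary, this proves the theorem.

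Finally, the bound can be sharpened to the scale-invariant form $|\nabla\log u|\ls C_n\sqrt K$: the inequality of Theorem~\ref{thm1.3} is covariant under scaling the metric by a constant $\lambda>0$ (which replaces $K$ by $K/\lambda^2$, ${\rm diam}$ by $\lambda\,{\rm diam}$, and $\sqrt K+1/R$ by $\lambda^{-1}(\sqrt K+1/R)$), and harmonic functions are preserved under such a scaling; hence for $K>0$ one may rescale so that $Ric\gs-1$, apply the previous paragraph with $K=1$, and rescale back, while for $K=0$ one lets $R\to\infty$ with $\Omega=B_p(2R)$ (the constant then depending only on $n$) to obtain $|\nabla\log u|\equiv0$, recovering Yau's Liouville theorem. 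The argument involves essentially no obstacle once Theorem~\ref{thm1.3} is in hand; the only point requiring care is precisely the one flagged above — one must localize to balls of a fixed radius rather than letting $R\to\infty$ directly when $K>0$.
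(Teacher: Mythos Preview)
Your argument is correct, but it is genuinely different from the route the paper takes. You derive the global bound by freezing a ball of fixed radius at each point and invoking Theorem~\ref{thm1.3}, so that the dependence on $\sqrt K\,{\rm diam}(\Omega)$ is uniformly controlled. The paper instead bypasses the full local $L^\infty$ estimate and works directly from the intermediate $L^s$ bound (Lemma~\ref{lem6.2}): it takes $s=R^2$, uses Bishop--Gromov to bound $\rv\big(B_p(2R)\big)^{1/R^2}\ls(C_1e^{C_2R})^{1/R^2}\to1$, and sends $R\to\infty$ so that $\|Q\|_{L^{R^2}(B_p(R))}\to\|Q\|_{L^\infty}$ while the right-hand side stays bounded by $2nK+8n$. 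Your localization is the more transparent deduction once Theorem~\ref{thm1.3} is in hand, and it also cleanly explains why one cannot just let $R\to\infty$ in that theorem; the paper's trick of coupling the exponent $s$ to the radius $R$ is slicker in that it needs only the coarser $L^s$ estimate (no Moser iteration) together with volume comparison. One small point: Theorem~\ref{thm1.3} as proved (Theorem~\ref{thm6.4}) actually requires $B_p(2R)\Subset\Omega$, so with $R=2$ you should take $\Omega=B_q(4+\epsilon)$ rather than $B_q(4)$; this is of course immaterial. Your final scaling remark yielding $|\nabla\log u|\ls C_n\sqrt K$ is a nice sharpening not stated in the paper.
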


The paper is organized as follows. In Section 2, we will provide some necessary
materials for calculus, Sobolev spaces and Ricci curvature on Alexandrov spaces. In Section 3, we will investigate
 a further property of Perelman's concave functions. Poisson equations and mean value inequality on Alexandrov
  spaces will be discussed in Section 4. Bochner type formula will be established in Section 5.
   In the last section, we will prove Yau's gradient estimates on Alexandrov spaces.

\noindent\textbf{Acknowledgements.} We are grateful to Prof. Petrunin for his patient explanation on his manuscript
 \cite{petu96}. We also would like to thank Dr. Bobo Hua for his careful reading on the first version of this paper.
  He told us a gap in the previous proof of Proposition \ref{prop5.3}. The second author is partially supported by NSFC 10831008.

\section{Preliminaries}
\subsection{Alexandrov spaces}

Let $(X,|\cdot\cdot|)$ be a metric space. A rectifiable curve $\gamma$ connecting two points $p,q$ is called a
 geodesic if its length is equal to $|pq|$ and it has unit speed. A metric space $X$ is called a geodesic space
  if every pair points $p,q\in X$ can be connected by \emph{some} geodesic.

Let $k\in\R$ and $l\in\mathbb N$. Denote by $\mathbb M^l_k$ the simply connected, $l$-dimensional space form of
 constant sectional curvature $k$. Given three points $p,q,r$ in a geodesic space $X$, we can take a comparison
  triangle $\triangle \bar p\bar q\bar r$ in the model spaces $\M^2_k$ such that $|\bar p\bar q|=|pq|$,
  $|\bar q\bar r|=|qr|$ and $|\bar r\bar p|=|rp|$. If $k>0$, we add assumption  $|pq|+|qr|+|rp|<2\pi/\sqrt{k}$.
   Angles $\wa_k pqr:=\angle \bar p\bar q\bar r$ are called comparison angles.

A geodesic space $X$ is called an Alexandrov space (of \emph{locally} curvature bounded below) if it satisfies the
 following properties:\\
\indent(i) it is locally compact;\\
\indent(ii)  for any point $x\in X$ there exists a neighborhood $U_x$ of $x$ and a real number $\kappa$ such that,
 for any four different points $p, a, b,c$ in $U_x$, we have $$\wa_\kappa apb+\wa_\kappa bpc+\wa_\kappa cpa\ls 2\pi.$$

The Hausdorff dimension of an Alexandrov space is always an integer. Let $M$ be an $n$-dimensional Alexandrov space,
 we denote by $\rv$ the $n$-dimensional Hausdorff measure of $M$.
Let $p\in M$, given two geodesics $\gamma(t)$ and $\sigma(s)$ with $\gamma(0)=\sigma(0)=p$, the angle
 $$\angle \gamma'(0)\sigma'(0):=\lim_{s,t\to0}\wa_\kappa \gamma(t)p\sigma(s)$$
 is well defined. We denote by $\Sigma'_p$ the set of equivalence classes of geodesic $\gamma(t)$ with $\gamma(0)=p$,
  where $\gamma(t)$ is equivalent to $\sigma(s)$ if $\angle\gamma'(0)\sigma'(0)=0$. The completion of metric space
  $(\Sigma_p',\angle)$ is called the space of directions at $p$, denoted by $\Sigma_p$. The tangent cone at $p$, $T_p$,
   is the Euclidean cone over $\Sigma_p$.
  For two tangent vectors $u,v\in T_p$, their ``scalar product" is defined by (see Section 1 in \cite{petu07})
$$\ip{u}{v}:=\frac{1}{2} (|u|^2 + |v|^2- |uv|^2).$$

  For each point $x\not=p$, the symbol $\uparrow_p^x$ denotes the direction at $p$ corresponding to \emph{some} geodesic $px$.
We refer to the seminar paper \cite{bgp92} or the text book \cite{bbi01} for the details.

Let $p\in M$. Given a direction $\xi\in \Sigma_p$, there does possibly not exists geodesic $\gamma(t)$ starting at $p$
 with $\gamma'(0)=\xi$. To overcome the difficulty, it is shown in \cite{pp95} that for any $p\in M$ and any
  direction $\xi\in \Sigma_p$, there exists a \emph{quasi-geodesic} $\gamma: [0,+\infty)\to M$ with $\gamma=p$
   and $\gamma'(0)=\xi.$ (see also Section 5 of \cite{petu07}).

 Let $M$ be an $n$-dimensional Alexandrov space and $p\in M$. Denote by (\cite{os94})
 $$W_p:=\big\{x\in M\backslash\{p\}\ \big| \ {\rm there\ exists}\ y\in M\ {\rm such\ that}\ y\not
  =x \ {\rm and } \ |py|=|px|+|xy|\big\}.$$
 According to \cite{os94}, the set $W_p$ has full measure in $X$. For each $x\in W_p$, the direction
  $\uparrow_p^x$ is uniquely determined, since any geodesic in $M$ does not branch (\cite{bgp92}).
  Recall that the  map $\log_p: W_p\to T_p$ is defined by $\log_p(x):=|px|\cdot\uparrow_p^x$ (see \cite{petu07}).
 We denote by
 $$\mathscr W_p:=\log_p(W_p)\subset T_p.$$
  The map $\log_p: W_p\to\mathscr W_p$ is one-to-one.
   After Petrunin in \cite{petu98}, the \emph{exponential map} $\exp_p : T_p \to M$ is
defined as follows. $\exp_p(o)=p$ and for any $v \in T_p\backslash\{o\}$, $\exp_p(v)$ is a point on some quasi-geodesic
of length $|v|$ starting point $p$ along direction $v/|v| \in \Sigma_p$.  If the quasi-geodesic is not
unique, we fix some one of them as the definition of $\exp_p(v)$.
    Then $\exp_p|_{\mathscr W_p}$ is the inverse map of $\log_p$, and hence $\exp_p|_{\mathscr W_p}:\ \mathscr W_p\to W_p$
     is one-to-one.  If $M$ has curvature $\gs k$ on $B_p(R)$, then exponential map
     $$\exp_p: B_o(R)\cap \mathscr W_p\subset T^k_p\to M$$ is an non-expending map (\cite{bgp92}), where $T^k_p$
      is the $k$-cone over $\Sigma_p$ and $o$ is the vertex of $T_p$.

 A point $p$ in an $n$--dimesional Alexandrov space $M$ is called to be \emph{regular} if its tangent cone $T_p$ is isometric
  to Euclidean space $\R^n$ with standard metric. A point $p\in M$ is called a singular point if it is not regular.
   Denote by $S_M$ the set of singular points of $M$. It is shown (in Section 10 of \cite{bgp92}) that the Hausdorff
    dimension of $S_M$ is $\ls n-1$ (see \cite{bgp92,os94}). Remark that the singular set $S_M$ is possibly dense in
     $M$ (see \cite{os94}). It is known that $M\backslash S_M$ is convex \cite{petu98}. Let $p$ be a regular point in
      $M$, for any $\epsilon>0$ there is a neighborhood $B_p(r)$ which is bi-Lipschitz  onto  an open domain in
       $\mathbb R^n$ with bi-Lipschitz constant $1+\epsilon$ (see Theorem 9.4 of \cite{bgp92}). Namely, there
        exists a map $F$ from $B_p(r)$ onto an open domain in $\mathbb R^n$ such that
 $$ (1+\epsilon)^{-1}\ls \frac{\|F(x)-F(y)\|}{|xy|}\ls 1+\epsilon\qquad \forall\ x,y\in B_p(r),\ x\not=y.$$

 A (generalized) $C^1$-structure on $M\backslash S_M$ is established in \cite{os94} as the following sense:
  there is an open covering $\{U_\alpha\}$ of  an open set containing $M\backslash S_M$, and a family of homeomorphism
   $\phi_\alpha:\ U_\alpha\to O_\alpha\subset\mathbb R^n$   such that if $U_\alpha\cap U_\beta\not =\varnothing$, then
 $$\phi_\alpha\circ\phi_\beta^{-1}:\ \phi_\beta(U_\alpha\cap U_\beta)\to \phi_\alpha(U_\alpha\cap U_\beta)$$
 is $C^1$ on $\phi_\beta\big((U_\alpha\cap U_\beta)\backslash S_M\big)$. A corresponding $C^0$-Riemannian
  metric $g$ on $M\backslash S_M$ is introduced in \cite{os94}. In \cite{p-dc}, this $C^1$-structure and the
   corresponding $C^0$-Riemannian metric has been extended to be a $DC^1$-structure and the corresponding
    $BV^0_{loc}$-Riemannian metric. Moreover, we have the following:\\
 \indent (1)\indent The distance function on $M\backslash S_M$ induced from $g$ coincides with the original
  one of $M$ (\cite{os94});\\
 \indent (2)\indent The Riemannian measure on $M\backslash S_M$ coincides with the Haudorff measure of $M$,
  that is, under a coordinate system $(U,\phi)$, the metric $g=(g_{ij}), $ we have
 \begin{equation}\label{eq2.1}d\rv(x)=\sqrt{\det(g(\phi(x)))}dx^1 \wedge\cdots\wedge dx^n\end{equation}
 for  all $x\in U\backslash S_M $ (Section 7 in \cite{os94}).

  A point $p$ is called a \emph{smooth} point  if it is regular and there exists a coordinate system $(U,\phi)$
   around $p$ such that \begin{equation}\label{eq2.2}
  |g_{ij}(\phi(x))-\delta_{ij}|=o(|px|),
  \end{equation}
   where $(g_{ij})$ is the corresponding Riemannian metric (see \cite{os94}) near $p$ and $(\delta_{ij})$
   is the identity $n\times n$ matrix. the set of smooth points has full measure \cite{p-dc}.

  \begin{lem}\label{lem2.1}
  Let $p\in M$ be a smooth point.  We have
   \begin{equation}\label{eq2.3}
   \Big|\frac{d\rv (x)\emph{}}{d{H^n}(v)}-1\Big|=o(r),\qquad \forall\ v\in B_o(r)\cap\mathscr W_p ,
   \end{equation}
  where $x=\exp_p (v), $
  and\begin{equation}\label{eq2.4}
  {H^n}\big(B_o(r)\cap\mathscr W_p\big)\gs H^n\big(B_o(r)\big)\cdot(1-o(r))
  \end{equation}
 where $H^n$ is $n$-dimensional Hausdorff measure on $T_p$.
 \end{lem}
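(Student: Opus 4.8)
The plan is to work in a $DC^1$ coordinate chart $(U,\phi)$ around the smooth point $p$ in which the Riemannian metric $(g_{ij})$ satisfies the asymptotic normalization $|g_{ij}(\phi(x))-\delta_{ij}|=o(|px|)$ from \eqref{eq2.2}, and to transport everything to the tangent cone $T_p=\mathbb R^n$ via $\exp_p$ and $\log_p$. The first step is to express the two Hausdorff measures in these coordinates: on the one hand, by \eqref{eq2.1} we have $d\rv(x)=\sqrt{\det(g(\phi(x)))}\,dx^1\wedge\cdots\wedge dx^n$ on $U\backslash S_M$; on the other hand, since $p$ is regular, $T_p$ is isometric to Euclidean $\R^n$ and $H^n$ on $T_p$ is just Lebesgue measure in the coordinates $v$. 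The Jacobian $\frac{d\rv(x)}{dH^n(v)}$ is then the product of $\sqrt{\det(g(\phi(x)))}$ with the Jacobian of the map $v\mapsto\phi(\exp_p v)$, so \eqref{eq2.3} will follow once I show (i) $\sqrt{\det g}=1+o(r)$, which is immediate from \eqref{eq2.2}, and (ii) that $v\mapsto\phi(\exp_p v)$ is, to first order, the identity up to an $o(r)$ error on its derivative, i.e. its Jacobian is $1+o(r)$ on $B_o(r)\cap\mathscr W_p$.

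For step (ii) the key geometric input is the comparison property of the exponential map recorded in the excerpt: if $M$ has curvature $\gs k$ near $p$, then $\exp_p:B_o(R)\cap\mathscr W_p\subset T_p^k\to M$ is non-expanding, while conversely radial distances are preserved exactly, $|p\exp_p v|=|v|$ for $v\in\mathscr W_p$. Combining the non-expanding bound (with $k$ chosen as a local lower curvature bound, so that $T_p^k$ and $T_p=\R^n$ differ only by $o(r)$ on $B_o(r)$) with the reverse inequality coming from the fact that $p$ is a smooth — hence regular — point, one gets that $\exp_p$ is an almost-isometry on $B_o(r)\cap\mathscr W_p$ with distortion $1+o(r)$; composing with the $DC^1$ chart $\phi$, whose differential at $p$ may be normalized to the identity (this is exactly what \eqref{eq2.2} encodes), yields that $\phi\circ\exp_p$ has Jacobian $1+o(r)$. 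This is the quantitative heart of the argument and the step I expect to be the main obstacle, because $\exp_p$ is only defined through quasi-geodesics and $\mathscr W_p$ is not open, so one must be careful that the differential of $\phi\circ\exp_p$ exists a.e.\ and that the pointwise distortion estimate can be integrated; the smoothness of $p$ (full-measure set of such points, \cite{p-dc}) is what makes this legitimate.

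Finally, for \eqref{eq2.4} I would estimate the measure of the complement $\big(B_o(r)\cap\mathscr W_p\big)^c$ inside $B_o(r)$. Since $W_p$ has full measure in $M$ and $\log_p:W_p\to\mathscr W_p$ is a bijection, $\mathscr W_p$ carries full $\rv$-pushed measure; more quantitatively, a point $v\in B_o(r)$ fails to lie in $\mathscr W_p$ precisely when the geodesic from $p$ in direction $v/|v|$ does not extend to length $|v|$, i.e.\ when $x=\exp_p v$ lies outside $W_p$. One then transfers the measure estimate through the almost-isometry $\exp_p$ established above: $H^n(B_o(r)\backslash\mathscr W_p)\le (1+o(r))\,\rv(\exp_p(B_o(r))\backslash W_p)$, and the right-hand side is $o(r)\cdot H^n(B_o(r))$ because the set of points at distance $\le r$ from $p$ that are not interior to any minimizing geodesic from $p$ shrinks faster than the ball's volume near a regular point — equivalently, by Bishop–Gromov comparison together with the fact that $\rv(B_p(r))=(1+o(r))\omega_n r^n$ at a smooth point $p$. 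Assembling (i), (ii) and this last estimate gives both \eqref{eq2.3} and \eqref{eq2.4}.
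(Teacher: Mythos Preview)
Your treatment of \eqref{eq2.3} is essentially the paper's: both reduce to $\sqrt{\det g}=1+o(r)$ via \eqref{eq2.2}. You are more explicit than the paper about the need to control the Jacobian of $\phi\circ\exp_p$ (step~(ii)); the paper simply absorbs this into the identification $dH^n(v)=dx^1\wedge\cdots\wedge dx^n$, so there is no substantive difference in approach here.

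For \eqref{eq2.4}, however, your argument has a genuine gap. The transfer inequality you write,
\[
H^n\big(B_o(r)\setminus\mathscr W_p\big)\;\le\;(1+o(r))\,\rv\big(\exp_p(B_o(r))\setminus W_p\big),
\]
goes the wrong way. The non-expanding property of $\exp_p$ only yields $\rv(\exp_p(A))\le(1+O(r^2))^n H^n(A)$: it bounds the measure of the \emph{image} by that of the \emph{preimage}, not the reverse. To bound $H^n$ of a set in $T_p$ above by $\rv$ of its image you would need $\exp_p$ to be bi-Lipschitz on that set, and off $\mathscr W_p$ this fails (there $\exp_p$ need not even be injective). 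Worse, since $\rv(M\setminus W_p)=0$, your right-hand side vanishes, so your inequality would force $H^n(B_o(r)\setminus\mathscr W_p)=0$, which is stronger than claimed and not what is being proved.

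The paper's route avoids this by sandwiching the total volume $\rv(B_p(r))$ rather than estimating the complement. On one side, \eqref{eq2.2} (via the chart $\phi$) gives $\rv(B_p(r))\ge(1-o(r))\,H^n(B_o(r))$. On the other side, since $\exp_p$ is a bijection from $B_o(r)\cap\mathscr W_p$ onto the full-measure set $W_p\cap B_p(r)$, the non-expanding property gives $\rv(B_p(r))\le(1+O(r^2))\,H^n(B_o(r)\cap\mathscr W_p)$. Chaining the two yields \eqref{eq2.4}. The key point is that the non-expanding map is used only in the direction it actually controls --- upper-bounding image volume by preimage volume --- and the chart supplies the reverse inequality. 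You have the right ingredients (Bishop--Gromov, the smooth-point volume asymptotic), but you should assemble them as a sandwich on $\rv(B_p(r))$ rather than as a direct bound on the complement.
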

\begin{proof}
Let $(U,\phi)$ be a coordinate system such that  $\phi(p)=0$ and  $B_p(r)\subset U$.
 For each $v\in B_o(r)\cap\mathscr W_p\subset T_p$,
$$d\rv(x)=\sqrt{\det[g_{ij}(\phi(x))]}dx^1\wedge\cdots\wedge dx^n,$$
where $x=\exp_p(v).$ Since $p$ is regular,  $T_p$ is isometric to $\mathbb R^n$. We obtain that
$$dH^n(v)=dH^n(o)= dx^1\wedge\cdots\wedge dx^n$$
 for all $v\in T_p$.
 We get
$$\frac{d\rv (x)\emph{}}{d{H^n}(v)}-1=\sqrt{\det[g_{ij}(\phi(x))]}-1.$$
Now the estimate (\ref{eq2.3}) follows from this and the equation (\ref{eq2.2}).

Now we want to show \eqref{eq2.4}.

  Equation \eqref{eq2.2} implies that (see \cite{p-dc}) for any $x,y\in B_p(r)\subset U$,
  $$\big||xy|-\|\phi(x)-\phi(y)\|\big|=o(r^2).$$
  In particular, the map $\phi: U\to \mathbb R^n$ satisfies
  $$ \phi\big(B_p(r)\big)\supset B_o\big(r-o(r^2)\big).$$
On one hand, from  \eqref{eq2.2}, we have
\begin{equation}\label{eq2.5}\begin{split}
{\rm vol}(B_p(r))&=\int_{\phi(B_p(r))}\sqrt{\det(g_{ij})}dx^1\wedge\cdots\wedge dx^n\\
&\gs H^n\big(\phi(B_p(r))\big)\cdot\big(1- o(r)\big)\gs   H^n\big( B_o(r-o(r^2))\big)\cdot(1- o(r))\\
&= H^n(B_o(r))\cdot(1- o(r)).\end{split}
\end{equation}

On the other hand, because
     $\exp_p: B_o(R)\cap \mathscr W_p\subset T^k_p\to M$ is an non-expending map (\cite{bgp92}), where $T^k_p$
      is the $k$-cone over $\Sigma_p$ and $o$ is the vertex of $T_p$,  we have
  $$ \exp_p: B_o(R)\cap \mathscr W_p\subset T_p\to M$$
  is a Lipschitz map with Lipschitz constant $1+O(r^2).$ Hence we get
   $$H^n(B_o(r)\cap\mathscr W_p)\cdot(1+O(r^2))\gs{\rm vol}(B_p(r)).$$
Therefore, by combining with equation \eqref{eq2.5},
we have
$$H^n(B_o(r)\cap\mathscr W_p)\gs H^n\big(B_o(r)\big)\cdot(1-O(r^2))\cdot(1-o(r))= H^n(B_o(r) \cdot(1-o(r)).$$
This is the desired estimate (\ref{eq2.4}).
\end{proof}
\be{rem}{If $M$ is a $C^2$-Riemannian manifold, then for sufficiently small $r>0$, we have
 $$\Big|\frac{d{\rm vol}(x)}{dH^n(v)}-1\Big|=O(r^2),\quad \forall \ v\in B_o(r)\subset T_p\quad {\rm and }\quad x=\exp_p(v).$$
}

Let $M$ be an Alexandrov space without boundary and $\Omega\subset M $ be an open set. A locally Lipschitz
function $f: \Omega\to\R$ is called to be  $\lambda$-concave (\cite{petu07}) if for all geodesics $\gamma(t)$ in $ \Omega$, the function
$$f\circ\gamma(t)-\lambda\cdot t^2/2$$
is concave.  A function $f: \Omega\to\R$ is called to be semi-concave if for any $x\in \Omega$, there exists
 a neighborhood of $U_x\ni  x$ and a number $\lambda_x\in \R$ such that $f|_{U_x}$ is  $\lambda_x$-concave.
  In fact, it was shown that the term ``geodesic" in the definition can be replaced by ``quasigeodesic" (\cite{pp95,petu07}).
Given a semi-concave function $f: M\to\R$, its differential $d_pf$ and gradient $\nabla_pf$ are well-defined
 for each point $p\in M$ (see Section 1 in \cite{petu07} for the basic properties  of semi-concave functions).
 \begin{center}\emph{From now on, we always consider Alexandrov spaces without boundary.}
 \end{center}

 Given a semi-concave function $f: M\to\mathbb R$, a point $p$ is called a \emph{$f$-regular} point if $p$ is
  smooth, $d_pf$ is a linear map on $T_p \ (=\mathbb R^n)$ and there exists a quadratic form $H_pf$ on $T_p$ such that
 \be{equation}
{\label{eq2.6}
f(x)=f(p)+d_pf(\uparrow^x_p)\cdot|xp|+\frac{1}{2}H_pf(\uparrow^x_p,\uparrow^x_p)\cdot|px|^2+o(|px|^2)
}
for any direction $\uparrow^x_p$. We denote by $Reg_f$ the set of all $f$-regular points in $M$.
 According to \cite{p-dc}, $Reg_f$ has full measure in $M$.
\be{lem}
{\label{lem2.3}Let $f$ be a semi-concave function on $M$ and $p\in M$. Then we have
\begin{equation}\label{eq2.7}
\fint_{B_p(r)}\Big(f(x)-f(p)\Big)d{\rm vol}(x)=\frac{nr}{n+1}\cdot\fint_{\Sigma_p}d_pf(\xi)d\xi+o(r),
\end{equation}
where $\fint_Bfd{\rm vol}=\frac{1}{\rv (B)}\int_Bfd{\rm vol}.$ Furthermore, if we add to assume that $p\in Reg_f$, then  \begin{equation}\label{eq2.8}
\fint_{B_p(r)}\Big(f(x)-f(p)\Big)d{\rm vol}(x)=\frac{nr^{2}}{2(n+2)}\cdot\fint_{\Sigma_p}H_pf(\xi,\xi)d\xi+o(r^{2}).
\end{equation}
}
\begin{proof}
According to Theorem 10.8 in \cite{bgp92}, we have
\begin{equation}\label{eq2.9}
\frac{d{\rm vol}(\exp_p(v))}{d{H^n}(v)}=1+o(1), \qquad \frac{\rv(B_p(r))}{H^n(B_o(r))}=1+o(1).
\end{equation}
Similar as in the proof of equation \eqref{eq2.4}, we have
$$ \rv(B_o(r)\cap \mathscr W_p)\gs H^n(B_o(r)) \cdot(1-o(1)).$$
Since   $f(x)-f(p)=d_pf(\uparrow^x_{p})\cdot|px|+o(|px|)$, we get
\begin{equation}\label{eq2.10}\begin{split}
\int_{B_p(r)}\Big(&f(x)-f(p)\Big)d\rv(x)\\
&=\int_{B_o(r)\cap \mathscr W_p}\Big( d_pf(v)+o(|v|)\Big)(1+o(1))dH^n(v).\end{split}
\end{equation}
On the other hand, from (\ref{eq2.9}), we have
$$\Big|\int_{B_o(r)\backslash \mathscr W_p} d_pf(v)dH^n(v)\Big|\ls O(r)\cdot H^n\big(B_o(r)\backslash \mathscr W_p\big)\ls o(r^{n+1}).$$
By combining this and (\ref{eq2.10}), we obtain
\begin{equation*}\begin{split}
\fint_{B_p(r)}\Big(f(x)-f(p)\Big)d\rv(x)&=\frac{H^n(B_o(r))}{\rv(B_p(r))}\fint_{B_o(r)}d_pf(v)dH^n(v)+o(r)\\
&=\fint_{B_o(r)}d_pf(v)dH^n(v)(1+o(1))+o(r)\\
&=\fint_{B_o(r)}d_pf(v)dH^n(v) +o(r)\\&
=\frac{nr}{n+1}\fint_{\Sigma_p} d_pf(\xi)d\xi+o(r).
\end{split}
\end{equation*}
This is equation (\ref{eq2.7}).

Now we want to prove (\ref{eq2.8}). Assume that $p$ is a $f$-regular point. From  \eqref{eq2.6} and Lemma \ref{lem2.1}, we have
 \begin{equation}\label{eq2.11}\begin{split}
\int_{B_p(r)}&\Big(f(x)-f(p)\Big)d\rv(x)\\
&=\int_{B_o(r)\cap\mathscr W_p}\Big(d_pf(v)+\frac{1}{2}H_pf(v,v)+o(|v|^2)\Big)\cdot(1+o(r))dH^n(v).\end{split}
\end{equation}
Using Lemma \ref{lem2.1} again, we have
$$\Big|\int_{B_o(r)\backslash\mathscr W_p}d_pf(v)dH^n\Big|\ls O(r)\cdot H^n(B_o(r)\backslash\mathscr W_p)=O(r)\cdot o(r)\cdot H^n(B_o(r))=o(r^{n+2}).$$
Noticing that $\int_{B_o(r)}d_pf(v)dH^n=0$, we get
\begin{equation}\label{eq2.12}\int_{B_o(r)\cap\mathscr W_p}d_pf(v)dH^n=o(r^{n+2}).
\end{equation}
Similarly, we have
\begin{equation}\label{eq2.13}\begin{split}
\int_{B_o(r)\cap\mathscr W_p}H_pf(v,v)dH^n&=\int_{B_o(r)}H_pf(v,v)dH^n+o(r^{n+3}). \end{split}\end{equation}
From \eqref{eq2.11}--(\ref{eq2.13}) and Lemma \ref{lem2.1}, we have
\begin{equation*}
\begin{split}
\fint_{B_p(r)}\Big(f(x)-f(p)\Big)d\rv(x)&=\frac{H^n(B_o(r))}{\rv(B_p(r))}\fint_{B_o(r)}H_pf(v,v)dH^n+o(r^{2})\\
&= \fint_{B_o(r)}H_pf(v,v)dH^n(1+o(r))+o(r^{2})\\
&=\frac{nr^{2}}{2(n+2)}\fint_{\Sigma_p}H_pf(\xi,\xi)d\xi+o(r^{2}).
\end{split}\end{equation*}
This is the desired (\ref{eq2.8}).
\end{proof}
Given a continuous function $g$ defined on $B_p(\delta_0)$, where $\delta_0$ is a sufficiently small
 positive number, we have $$\int_{\partial B_p(r)}gd{\rv}=\frac{d}{dr}\int_{B_p(r)}gd{\rv}$$
for almost all $r\in(0,\delta_0).$\\
\noindent{\bf Lemma $2.3'$}
\emph{Let $f$ be a semi-concave function on $M$ and $p\in M$. Assume $\delta_0$ is a sufficiently small
 positive number. Then we have, for almost all $r\in (0,\delta_0),$
\begin{equation}\label{eq2.14}
\fint_{\partial B_p(r)}\Big(f(x)-f(p)\Big)d{\rm vol}(x)=nr\cdot\fint_{\Sigma_p}d_pf(\xi)d\xi+o(r).
\end{equation}
 Furthermore, if we add to assume that $p\in Reg_f$, then we have, for almost all $r\in (0,\delta_0),$
  \begin{equation}\label{eq2.15}
\fint_{\partial B_p(r)}\Big(f(x)-f(p)\Big)d{\rm vol}(x)=\frac{r^{2}}{2}\cdot\fint_{\Sigma_p}H_pf(\xi,\xi)d\xi+o(r^{2}).
\end{equation}
}

\subsection{Sobolev spaces}
Several different notions of Sobolev spaces have been established,
 see\cite{c99,kms01,n00,ks93,ks03}\footnote{In \cite{c99,ks93,n00,ks03}, Sobolev spaces
  are defined on  metric measure spaces supporting a doubling property and a Poincar\'e inequality. Since $\Omega$
   is bounded, it satisfies a   doubling property and supports a  weakly Poincar\'e inequality \cite{kms01}.}.
    They coincide each other on Alexandrov spaces.

Let $M$ be an $n$-dimensional Alexandrov space and let $\Omega$ be a bounded open domain in $M$.  Given $u\in C(\Omega)$.
 At a point $p\in \Omega$, the \emph{pointwise Lipschitz constant} (\cite{c99}) and \emph{subgradient norm}
  (\cite {lv07-hj}) of $u$ at $x$ are defined by:
$${\rm Lip}u(x):=\limsup_{y\to x}\frac{|f(x)-f(y)|}{|xy|}\quad {\rm and }\quad |\nabla ^-u|(x):=\limsup_{y\to x}\frac{\big(f(x)-f(y)\big)_+}{|xy|},$$
where $a_+=\max\{a,0\}.$ Clearly, $|\nabla ^-u|(x)\ls {\rm Lip}u(x).$ It was shown in \cite{lv07-hj} for
 a locally Lipschitz function $u$ on $\Omega$, $$ |\nabla^-u|(x)={\rm Lip}u(x)$$
for almost all $x\in\Omega$\footnote{ See Remark 2.27 in \cite{lv07-hj} and its proof.}.

 Let $x\in \Omega$ be a regular point, We say that a function $u$ is \emph{differentiable} at $x$,
  if there exist a vector in $T_x$\ ($=\mathbb R^n$), denoted by $\nabla u(x)$, such that for all
   geodesic $\gamma(t):[0,\epsilon)\to\Omega$ with $\gamma(0)=x$ we have
\begin{equation}\label{eq2.16}
u(\gamma(t))=u(x)+t\cdot\ip{\nabla u(x)}{\gamma'(0)}+o(t).
\end{equation}
 Thanks to Rademacher theorem, which was proved by Cheeger \cite{c99} in the framework of general
 metric measure spaces with
a doubling measure and a Poincar\'e inequality for upper gradients and  was proved by Bertrand \cite{b08}
 in Alexandrov space via a
simply argument, a locally Lipschitz function $u$ is differentiable almost everywhere in $M$. (see also \cite{o95}.)
 Hence  the  vector $\nabla u(x)$ is well defined almost everywhere in $M$.

Remark that any semi-concave function $f$ is locally Lipschitz. The differential of $u$ at any point $x$, $d_xu$, is
well-defined.(see Section 1 in \cite{petu07}.) The gradient $\nabla_xu$ is defined as the maximal value point of
 $d_xu: B_o(1)\subset T_x\to \R.$

\begin{prop}\label{diff}
Let $u$ be a semi-concave function on an open domain $\Omega\subset M$. Then for any $x\in \Omega\backslash S_M$,
 we have
$$|\nabla_xu|\ls |\nabla^-u|(x).$$
Moreover, if $u$ is differentiable at $x$, we have
$$|\nabla_xu|= |\nabla^-u|(x)={\rm Lip}u(x)=|\nabla u(x)|.$$
\end{prop}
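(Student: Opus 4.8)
The plan is to combine three standard facts about semi-concave functions. First, since $x\in\Omega\backslash S_M$, the tangent cone $T_x$ is $\R^n$ with its Euclidean inner product, so in particular it is closed under negation. Second, recall from \cite{petu07} that the differential $d_xu$ is a continuous function on $T_x$ and that the gradient satisfies $d_xu(w)\ls\ip{\nabla_xu}{w}$ for every $w\in T_x$ together with $d_xu(\nabla_xu)=|\nabla_xu|^2$; in particular $|\nabla_xu|=\max_{\xi\in\Sigma_x}d_xu(\xi)$ when this maximum is positive, and $\nabla_xu=o$ otherwise. Third, if $u$ is $\lambda$-concave near $x$, then $u(\gamma(t))-\lambda t^2/2$ is concave along every quasi-geodesic $\gamma$ starting at $x$, which gives the \emph{uniform} one-sided estimate $u(\gamma(t))\ls u(x)+t\,d_xu(\gamma'(0))+\tfrac{\lambda}{2}t^2$.

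For the inequality $|\nabla_xu|\ls|\nabla^-u|(x)$ I would argue as follows. If $\nabla_xu=o$ there is nothing to prove, so assume $\nabla_xu\not=o$ and put $\xi_0=\nabla_xu/|\nabla_xu|\in\Sigma_x$. Testing the gradient inequality with $w=-\nabla_xu\in T_x$ gives $d_xu(-\xi_0)\ls-|\nabla_xu|$. Choose a quasi-geodesic $\gamma$ with $\gamma(0)=x$ and $\gamma'(0)=-\xi_0$ (it exists by \cite{pp95}). Since $|x\gamma(t)|\ls t$ and, by the third fact, $u(\gamma(t))\ls u(x)-t|\nabla_xu|+\tfrac\lambda2 t^2$, we get for small $t>0$
$$\frac{\big(u(x)-u(\gamma(t))\big)_+}{|x\gamma(t)|}\ \gs\ |\nabla_xu|-\tfrac\lambda2 t .$$
Letting $t\to0^+$ (so $\gamma(t)\to x$) yields $|\nabla^-u|(x)\gs|\nabla_xu|$.

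For the ``moreover'' part, assume in addition that $u$ is differentiable at $x$. Then \eqref{eq2.16} shows $d_xu(\xi)=\ip{\nabla u(x)}{\xi}$ for every geodesic direction $\xi$, and since $d_xu$ is continuous on $\Sigma_x$ while geodesic directions are dense there, $d_xu$ equals the linear map $\ip{\nabla u(x)}{\cdot}$ on all of $T_x$; its gradient in the above sense is exactly $\nabla u(x)$, so $|\nabla_xu|=|\nabla u(x)|$. Combined with the inequality just proved and the elementary $|\nabla^-u|(x)\ls{\rm Lip}\,u(x)$, this gives
$$|\nabla u(x)|=|\nabla_xu|\ls|\nabla^-u|(x)\ls{\rm Lip}\,u(x),$$
so it only remains to prove ${\rm Lip}\,u(x)\ls|\nabla u(x)|$, after which equality holds throughout. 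Since $|u(x)-u(y)|=(u(x)-u(y))_++(u(y)-u(x))_+$ with one summand zero, it suffices to bound both $\limsup_{y\to x}\frac{(u(y)-u(x))_+}{|xy|}$ and $\limsup_{y\to x}\frac{(u(x)-u(y))_+}{|xy|}$ by $|\nabla u(x)|$. The former follows directly from $\lambda$-concavity along a geodesic $xy$: $u(y)\ls u(x)+\ip{\nabla u(x)}{\uparrow_x^y}|xy|+\tfrac\lambda2|xy|^2\ls u(x)+|\nabla u(x)|\,|xy|+\tfrac\lambda2|xy|^2$. For the latter I would pick $y_k\to x$ realizing the $\limsup$, take geodesics $\gamma_k$ from $x$ to $y_k$, pass to a subsequence with $\gamma_k'(0)\to\xi_\infty\in\Sigma_x$, fix a large index $k_0$, compare $y_k=\gamma_k(t_k)$ with $\gamma_{k_0}(t_k)$ (where $t_k=|xy_k|$) via the comparison estimate for geodesics issuing from $x$ together with the local Lipschitz bound on $u$, apply \eqref{eq2.16} along the single geodesic $\gamma_{k_0}$, and let first $k\to\infty$ and then $k_0\to\infty$; this yields $\limsup_k\frac{u(x)-u(y_k)}{t_k}\ls\ip{\nabla u(x)}{\xi_\infty}\ls|\nabla u(x)|$.

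The hard part will be this last step: $\lambda$-concavity controls $u$ only from above, so the bound $\limsup_{y\to x}\frac{(u(x)-u(y))_+}{|xy|}\ls|\nabla u(x)|$ genuinely requires upgrading the pointwise expansion \eqref{eq2.16}, which a priori is valid only along each individual geodesic, to an estimate uniform for $y$ in a neighbourhood of $x$; this is where the comparison for geodesics emanating from a common point has to be invoked with some care.
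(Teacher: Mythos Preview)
Your argument for the first inequality $|\nabla_xu|\ls|\nabla^-u|(x)$ is correct and essentially the paper's: the paper picks a sequence $y_j\to x$ whose geodesic directions $\uparrow_x^{y_j}$ converge to $-\nabla_xu/|\nabla_xu|$ and applies $\lambda$-concavity along the geodesics $xy_j$, which is the same idea as your quasi-geodesic in the direction $-\xi_0$.

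For the ``moreover'' part your proof is correct but you are working much harder than necessary, because you read \eqref{eq2.16} as an expansion along each individual geodesic with its own $o(t)$, and then spend the last paragraph upgrading this to a uniform estimate via a comparison-of-geodesics argument. The paper simply treats differentiability as meaning
\[
u(y)-u(x)=|xy|\,\ip{\nabla u(x)}{\uparrow_x^y}+o(|xy|)\qquad\text{uniformly as }y\to x,
\]
takes absolute values, and reads off ${\rm Lip}\,u(x)\ls|\nabla u(x)|$ in one line. The paper then separately shows $|\nabla u(x)|\ls|\nabla_xu|$ by combining the differentiability expansion with the $\lambda$-concavity bound $u(y)-u(x)\ls|xy|\ip{\nabla_xu}{\uparrow_x^y}+\tfrac{\lambda}{2}|xy|^2$ and choosing directions approaching $\nabla u(x)/|\nabla u(x)|$; you instead get $\nabla_xu=\nabla u(x)$ by extending $d_xu=\ip{\nabla u(x)}{\cdot}$ from geodesic directions to all of $\Sigma_x$ by density, which is equally valid.

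One small slip: in your last display the limit should be $\limsup_k\frac{u(x)-u(y_k)}{t_k}\ls-\ip{\nabla u(x)}{\xi_\infty}$, not $+\ip{\nabla u(x)}{\xi_\infty}$, since along the fixed geodesic $\gamma_{k_0}$ you get $u(x)-u(\gamma_{k_0}(t_k))=-t_k\ip{\nabla u(x)}{\gamma'_{k_0}(0)}+o(t_k)$. This does not affect the conclusion, as $-\ip{\nabla u(x)}{\xi_\infty}\ls|\nabla u(x)|$ all the same.
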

\begin{proof} Without loss of generality,  we can assume that $|\nabla_xu|>0$. (Otherwise, we are done.)
 Since $x$ is regular, there exists direction $-\nabla_xu$. Take a sequence of point $\{y_j\}_{j=1}^\infty$ such that $$\lim_{j\to\infty}y_j=x\quad{\rm and }\quad\lim_{j\to\infty}\uparrow^{y_j}_x=-\frac{\nabla_xu}{|\nabla_xu|}.$$
 By semi-concavity of $u$, we have
 $$u(y_j)-u(x)\ls |xy_j|\cdot\ip{\nabla_x u}{\uparrow_x^{y_j}}+\lambda|xy_j|^2/2,\quad j=1,2,\cdots$$
for some $\lambda\in\R$.
Hence
$$-\ip{\nabla_x u}{\uparrow_x^{y_j}}\ls \frac{\big(u(x)-u(y_j)\big)_+}{|xy_j|}+\lambda|xy_j|/2,\quad j=1,2,\cdots$$
Letting $j\to\infty$, we conclude  $|\nabla_xu|\ls |\nabla^-u|(x).$

Let us prove the second assertion.  We need only to show ${\rm Lip}u(x)\ls|\nabla u(x)|$ and $|\nabla u(x)|\ls |\nabla_xu|.$
 Since $u$ is differentiable at $x$, we have
$$u(y)-u(x)=|xy|\cdot\ip{\nabla u(x)}{\uparrow_x^y}+o(|xy|)$$
for all $y$ near $x$.
Consequently,
$$|u(y)-u(x)|=|xy|\cdot|\ip{\nabla u(x)}{\uparrow_x^y}|+o(|xy|)\ls|xy|\cdot |\nabla u(x)|+o(|xy|).$$
This implies that ${\rm Lip}u(x)\ls|\nabla u(x)|.$

Finally, let us show $|\nabla u(x)|\ls |\nabla_xu|.$ Indeed, combining the differentiability and semi-concavity of $u$, we have
$$ |xy|\cdot\ip{\nabla u(x)}{\uparrow_x^y}+o(|xy|)=u(y)-u(x)\ls |xy|\cdot\ip{\nabla_x u}{\uparrow_x^y}+\lambda |xy|^2/2$$
for all $y$ near $x$.
Without loss of generality, we can assume that $|\nabla u(x)|>0$. Take $y$ such that direction $\uparrow_x^y$
 arbitrarily close to $\nabla u(x)/|\nabla u(x)|$. We get
$$|\nabla u(x)|^2\ls  \ip{\nabla_xu}{\nabla u(x)}\ls |\nabla _xu|\cdot|\nabla u(x)|.$$
This is
 $|\nabla _xu|\ls|\nabla u(x)|.$
 \end{proof}

According to this Proposition \ref{diff}, we will not distinguish  between two notations $\nabla_x u $ and $\nabla u(x)$
 for any semi-concave function $u$.\\

We denote by $Lip_{loc}(\Omega)$ the set of locally Lipschitz continuous functions on $\Omega$, and by $Lip_0(\Omega)$
 the set of Lipschitz continuous functions on $\Omega$ with compact support in $\Omega.$ For any $1\ls p\ls +\infty$ and
   $u\in Lip_{loc}(\Omega)$, its $W^{1,p}(\Omega)$-norm is defined by
$$\|u\|_{W^{1,p}(\Omega)}:=\|u\|_{L^{p}(\Omega)}+\|{\rm Lip}u\|_{L^{p}(\Omega)}.$$
 Sobolev spaces $W^{1,p}(\Omega)$ is defined by the closure of the set
$$\{u\in Lip_{loc}(\Omega)|\ \|u\|_{W^{1,2}(\Omega)}<+\infty\},$$
under  $W^{1,p}(\Omega)$-norm.
Spaces $W_0^{1,p}(\Omega)$ is defined by the closure of $Lip_0(\Omega)$ under  $W^{1,p}(\Omega)$-norm.
(This coincides with the definition in \cite{c99}, see Theorem 4.24 in \cite{c99}.)
We say a function $u\in W^{1,p}_{loc}(\Omega)$ if $u\in W^{1,p}(\Omega')$ for every open subset $\Omega'\Subset\Omega.$
 According to   Kuwae--Machigashira--Shioya \cite{kms01}  (see also Theorem 4.47 in \cite{c99}),  the ``derivative" $\nabla u$
  is well-defined for all  $u\in W^{1,p}(\Omega)$ with $1<p<\infty$. Cheeger in Theorem 4.48 of \cite{c99}
   proved that $W^{1,p}(\Omega)$ is reflexive for any $1<p<\infty.$

\subsection{Ricci curvature}
For an Alexandrov space, several different definitions of ``Ricci curvature having lower bounds by $K$" have been
 given (see Introduction).

Here, let us recall the  definition of lower bounds of Ricci curvature on Alexandrov space in \cite{zz10}.

 Let $ M$ be an $n$-dimensional Alexandrov space. According to Section 7 in \cite{bgp92}, if $p$ is an interior point of a geodesic $\gamma$, then the
  tangent cone $T_p$ can be isometrically split into $$T_p=L_p\times \R\cdot\gamma',\qquad v=(v^\bot,t).$$
   We set
   $$\Lambda_p=\{\xi\in L_p: \ |\xi|=1\}.$$
\begin{defn}\label{defn2.4}
 Let $\sigma(t):(-\ell,\ell)\to M$ be a geodesic  and $\{g_{\sigma(t)}(\xi)\}_{-\ell<t<\ell}$ be a  family of
  functions on $\Lambda_{\sigma(t)}$ such that $g_{\sigma(t)}$ is continuous on $\Lambda_{\sigma(t)}$ for
   each $t\in(-\ell,\ell)$.
We say that the family $\{g_{\sigma(t)}(\xi)\}_{-\ell<t<\ell}$ satisfies $Condition\ (RC)$ on $\sigma$ if
for any two points $q_1,q_2\in \sigma$ and  any sequence $\{\theta_j\}_{j=1}^\infty$ with $\theta_j\to 0$
 as $j\to\infty$, there exists an isometry $T: \Sigma_{q_1}\to \Sigma_{q_2}$ and a subsequence $\{\delta_j\}$ of $\{\theta_j\}$
 such that
\begin{equation}\label{eq2.17}
\begin{split}|\exp_{q_1}&(\delta_j l_1\xi),\ \exp_{q_2}(\delta_j l_2T\xi)|\\ \ls  &|q_1q_2|+(l_2-l_1)\ip{\xi}{\gamma'}\cdot \delta_j\\&+\Big(\frac{(l_1-l_2)^2}{2|q_1q_2|}
-\frac{g_{q_1}(\xi^\bot)\cdot|q_1q_2|}{6}\cdot(l_1^2+l_1\cdot l_2+l^2_2)\Big)\cdot\Big(1-\ip{\xi}{\gamma'}^2\Big)
\cdot\delta^2_j\\
&+o(\delta_j^2)
\end{split}\end{equation}
 for any $l_1,l_2\gs0$ and any $\xi\in \Sigma_{q_1}$.
\end{defn}

If $M$ has curvature bounded below by $k_0$ (for some $k_0\in \R$), then by Theorem 1.1 of \cite{petu98}
(or see Theorem 20.2.1 of \cite{akp11}),  the family of functions $\{g_{\sigma(t)}(\xi)=k_0\}_{-\ell<t<\ell}$
 satisfies $Condition \ (RC)$ on $\sigma$. In particular, if a family $\{g_{\sigma(t)}(\xi)\}_{-\ell<t<\ell}$
 satisfies $Condition\ (RC)$, then the family  $\{g_{\sigma(t)}(\xi)\vee k_0\}_{-\ell<t<\ell}$ satisfies $Condition\ (RC)$ too.
\begin{defn}\label{defn2.5}
 Let  $\gamma:[0,a)\to M$ be a geodesic. We say that $M$ has \emph{Ricci curvature bounded below by $K$ along $\gamma$},
if for any $\epsilon>0$ and any $0<t_0<a$, there exists $\ell=\ell(t_0,\epsilon)>0$ and a family of continuous functions $\{g_{\gamma(t)}(\xi)\}_{t_0-\ell<t<t_0+\ell}$ on $\Lambda_{\gamma(t)}$ such that the family satisfies
 $Condition\ (RC)$ on $\gamma|_{(t_0-\ell,\ t_0+\ell)}$ and
\begin{equation}\label{eq2.18}
(n-1)\cdot\fint_{\Lambda_{\gamma(t)}}g_{\gamma(t)}(\xi)d\xi\gs K-\epsilon, \qquad \forall t\in(t_0-\ell,t_0+\ell),
\end{equation}
where $\fint_{\Lambda_{x}}g_x(\xi)=\frac{1}{vol(\Lambda_x)}\int_{\Lambda_x}g_x(\xi)d\xi$.

 We say that $M$ has \emph{Ricci curvature  bounded below by $K$}, denoted by $Ric(M)\gs K$, if  each point $x\in M$
  has  a neighborhood $U_x$   such that  $M$ has Ricci curvature bounded below by $K$ along every geodesic $\gamma$ in $ U_x$.
 \end{defn}

\begin{rem}\label{cuvtoRic}
Let $M$ be an $n$-dimensional Alexandrov space  with curvature $\gs k$. Let  $\gamma:[0,a)\to M$ be any  geodesic.
 By \cite{petu98},  the family of functions $\{g_{\gamma(t)}(\xi):=k\}_{0<t<a}$ satisfies Condition $(RC)$ on $\gamma$.
   According to the Definition \ref{defn2.5}, we know that $M$ has     Ricci curvature bounded from below by $(n-1)k$
   along $\gamma$. Because of  the arbitrariness of geodesic $\gamma$, $M$ has  Ricci curvature bounded from below by $(n-1)k$.
\end{rem}

Let $M$ be an $n$-dimensional Alexandrov space $M$ having Ricci curvature $\gs K$. In \cite{petu09} and Appendix of
 \cite{zz10}, it is shown that  metric measure space $(M,d,\rv)$ satisfies Sturm--Lott--Villani curvature-dimension
 condition $CD(K,n)$, and hence measure contraction property $MCP(K,n)$ (see \cite{s06-2,o07}, since Alexandrov spaces
  are non-branching) and infinitesimal Bishop-Gromov condition $BG(K,n)$ (\cite{ks07}, this is equivalent to $MCP(K,n)$
  on Alexandrov spaces). Consequently, $M$ satisfies a corresponding Bishop--Gromov volume comparison theorem \cite{s06-2,ks07}
   and a corresponding Laplacian comparison in sense of distribution \cite{ks07}.

\section{Perelman's concave functions}
Let $M$ be an Alexandrov space and $x\in M$. In \cite{p94}, Perelman constructed a strictly concave function on a neighborhood
 of $x$. This implies that there exists a convex neighborhood for each point in $M$. In this section, we will investigate a
  further property of Perelman's concave functions.

In this section, we always assume that $M$ has curvature bounded from below by $k$ (for some $k\in \R$).

Let $f:\Omega\subset M\to \R$ be a semi-concave function and $x\in\Omega$. Recall that
a vector $v_s\in T_x$ is said to be a supporting vector of $f$ at $x$ (see \cite{petu07}) if
$$d_xf(\xi)\ls -\ip{ v_s}{\xi} \quad {\rm for\ all}\ \xi\in \Sigma_x.$$
The set of supporting vectors of $f$ at $x$ is a non-empty convex set (see Lemma 1.3.7 of \cite{petu07}).
For a distance function $f=dist_p$, by the first variant formula (see, for example, \cite{bbi01}), any direction
 $\uparrow_x^p$ is a supporting vector of $f$ at $x\not=p$.
\be{prop}
{\label{prop3.1}Let $f:\Omega\subset M\to \R$ be a semi-concave function and $x\in\Omega$. Then we have
$$\int_{\Sigma_x}d_xf(\xi)d\xi\ls 0.$$

Furthermore, if $f$ is a distance function $f=dist_p$ and  $x\not=p$,  the $``="$ holds implies that $\uparrow_x^p$
 is uniquely determined and $\max_{\xi\in\Sigma_x}|\xi,\uparrow_x^p|=\pi.$
}
\begin{proof}
Let $v_s$ be a support vector of $f$ at $x$, then
$$d_xf(\xi)\ls -\ip{v_s}{\xi},\quad \forall \ \xi\in \Sigma_x.$$
Without loss of generality, we can assume $v_s\not=0$. (If $v_s=0$, then $d_xf(\xi)\ls0$. We are done.)
Setting $\eta_0=\frac{v_s}{|v_s|}\in \Sigma_x$, we have
$$d_xf(\xi)\ls -\ip{v_s}{\xi}=-|v_s|\cdot\cos(|\eta_0,\xi|) \qquad \forall \ \xi\in \Sigma_x.$$
Denote $D=\max_{\xi\in\Sigma_x}|\xi,\eta_0|$. By using co-area formula, we have $$I:=\int_{\Sigma_x}d_xf(\xi)d\xi\ls-|v_s|\cdot\int_{\Sigma_x}\cos(|\eta_0,\xi|)d\xi=
  -|v_s|\cdot\int_0^D\cos t\cdot A(t)dt,$$
  where $A(t)={\rm vol}_{n-2}(\{\xi\in\Sigma_x: \ |\xi,\eta_0|=t\}).$

If $D\ls\pi/2$, then $I<0$.

We consider the case $D>\pi/2.$ Since $\Sigma_x$ has curvature $\gs1$, by Bishop--Gromov comparison,
we have
$$A(\pi-t)\ls A(t)\cdot\frac{{\rm vol}_{n-2}(\partial B_o(\pi-t)\subset \mathbb S^{n-1})}{{\rm vol}_{n-2}(\partial B_o(t)
\subset \mathbb S^{n-1})}=A(t)$$
 for any $t\ls\pi/2.$
  Hence
  \begin{align*}\frac{I}{|v_s|}
&\ls-\int^{\pi/2}_0\cos t\cdot A(t)dt-\int^{D}_{\pi/2}\cos t\cdot A(t)dt\\
&\ls -\int^{\pi/2}_0\cos t\cdot A(\pi-t)dt-\int^{D}_{\pi/2}\cos t\cdot A(t)dt\\&=\int^{\pi}_{D}\cos t\cdot A(t)dt\ls0.
\end{align*}
 Moreover, if $I=0$, then $D=\pi.$

If $f=dist_p$, then $v_s$ can be chosen as any direction $\uparrow^p_x$. When $I=0$, we have
\begin{equation}\label{eq3.0+1}
d_xf(\xi)=-\ip{\uparrow_x^p}{\xi},\quad \forall\ \xi\in \Sigma_x,
 \end{equation}
and
$$\max_{\xi\in\Sigma_x}|\xi,\uparrow_x^p|=\pi. $$
The left-hand side of \eqref{eq3.0+1} does not depend on the choice of direction $\uparrow_x^p$. This implies that $\uparrow^p_x$  is determined uniquely.
\end{proof}
\be{lem}
{\label{lem3.2}Given any $n\in\mathbb N$ and any constant $C>0$, we can find $\delta_0=\delta_0(C,n)$ satisfying the following
 property: for any $n$-dimensional Alexandrov spaces $\Sigma^n$ with curvature $\gs1$, if there exist $0<\delta<\delta_0$
  and  points $\{p_j\}_{j=1}^N\subset \Sigma^n$ such that
\begin{equation}\label{eq3.1}
|p_ip_j|>\delta\quad (i\not=j), \qquad N:=\#\{p_j\}\gs C\cdot \delta^{-n}
\end{equation}
 and \begin{equation}\label{eq3.2}
{\rm rad}(p_j):=\max_{q\in \Sigma^n}|p_jq|=\pi\quad  {\rm for \ each}\quad 1\ls j\ls N,
\end{equation}
 then $\Sigma^n$ is isometric to $\mathbb S^n$.
}
\begin{proof}
We use an induction argument with respect to the dimension $n$.
When $n=1$, we take $\delta_0(C,1)=C/3$. Then for each 1--dimensional Alexandrov space $\Sigma^1$ satisfying the
 assumption of the Lemma must contain  at least three different points $p_1, p_2$ and $p_3$ with
 ${\rm rad}(p_i)=\pi$, $i=1,2,3$. Hence $\Sigma^1$ is isometric to $\mathbb S^1$.

Now we assume that the Lemma holds for dimension $n-1$. That is, for any $\widetilde{C}$, there exists
$\delta_0(\widetilde{C},n-1)$ such that any $(n-1)$--dimensional Alexandrov space satisfying the condition
of the Lemma must  be isometric to $\mathbb S^{n-1}$.

We want to prove the Lemma for dimension $n$. Fix any constant $C>0$ and let
\begin{equation}\label{eq3.3}
\delta_0(C,n)=\min\Big\{\frac{10}{8}\cdot \delta_0\Big(\frac{C}{11\pi} \cdot (10/8)^{1-n},n-1\Big),1\Big\}.
\end{equation}

Let $\Sigma^n$ be an $n$--dimensional Alexandrov space with curvature $\gs1.$ Suppose that there exists
 $0<\delta<\delta_0(C,n)$ and a set of points $\{p_\alpha\}_{\alpha=1}^N\subset \Sigma^n$ such that they
  satisfy (\ref{eq3.1}) and (\ref{eq3.2}).

 Let $q_1\in \Sigma^n$ be the point that $|p_1q_1|=\pi$. Then $\Sigma^n$ is a suspension over some $(n-1)$-dimensional Alexandrov space $\Lambda$ of curvature $\gs1$ and with vertex $p_1$
  and $q_1$, denoted by $\Sigma^n=S(\Lambda)$.  We divide $\Sigma^n$ into  pieces
  $A_1, \ A_2,  \cdots, A_l,\ \cdots, \ A_{\bar l}$ as
  $$A_l=\big\{x\in \Sigma^n :\
  (\delta/10)\cdot l<|xp_1|\ls(\delta/10)\cdot (l+1)\big\},\quad 0\ls l\ls \bar l:=[\frac{\pi}{\delta/10}],$$
  where $[a]$ is the integer such that $[a]\ls a<[a]+1.$
Then there exists some piece, say $A_{l_0}$, such that
 \begin{equation}\label{eq3.4}
 N_1:=\# \big(A_{l_0}\cap \{p_j\}_{j=1}^N\big)\gs \frac{N}{\bar l+1}\gs  \frac{N}{10\pi/\delta
 +1}\overset{(\delta<1)}{\gs} \frac{C}{11\pi}\cdot \delta^{1-n}.
 \end{equation}
Notice that $$A_1\cup A_2\subset B_{p_1}(\delta/2)\quad {\rm and}\quad   A_{\bar l}\cup A_{\bar l-1}\subset B_{q_1}(\delta/2),$$
we have $l_0\not\in\{1,2,\bar l-1,\bar l\}.$

We denote the points $A_{l_0}\cap \{p_\alpha\}_{\alpha=1}^N$ as $(x_i,t_i)_{i=1}^{N_1}\subset S(\Lambda)\
 (=\Sigma^n),$ where $x_i\in \Lambda$ and $0<t_i<\pi$ for $1\ls i\ls N_1.$ Let $\gamma_i$
  be the geodesic $p_1(x_i,t_i)q_1$ and $\widetilde{p}_i=\gamma_i\cap \partial B_{p_1}\big((l_0+1)\cdot \delta/10\big).$
   By triangle inequality, we have
\begin{equation}\label{eq3.5}
|\widetilde{p}_i\widetilde{p}_j|\gs\frac{8}{10}\cdot \delta.
\end{equation}
Applying  cosine law, we have
$$\cos(|\widetilde{p}_i\widetilde{p}_j|)
=\cos(|p_1\widetilde{p}_i|)\cdot\cos(|p_1\widetilde{p}_j|)
+\sin(|p_1\widetilde{p}_i|)\cdot\sin(|p_1\widetilde{p}_j|)\cdot\cos(|x_ix_j|)$$
for each $i\not=j$.
Since $|p_1\widetilde{p}_i|=|p_1\widetilde{p}_j|$, we get
\begin{equation}\label{eq3.6}
|x_ix_j|\gs |\widetilde{p}_i\widetilde{p}_j|.
\end{equation}
By the assumption \eqref{eq3.2}, there exist points   $(\bar{x_i},\bar{t_i})\in \Sigma^n\ \big(=S(\Lambda)\big)$   such that $$|(x_i,t_i),(\bar{x_i},\bar{t_i})|=\pi$$
 for each $1\ls i\ls N_1$.
 By using the cosine law again, we have
 \begin{align*}
 -1=\cos(|(x_i,t_i)(\bar{x_i},\bar{t_i})|)
 &=\cos t_i\cdot\cos \bar{t_i}+\sin t_i\cdot\sin \bar{t_i}\cdot\cos (|x_i\bar{x_i}|)\\
 &=\cos(t_i+\bar{t_i})+\sin t_i\cdot\sin \bar{t_i}\cdot\big(\cos (|x_i\bar{x_i}|)+1\big)\\
 &\gs \cos(t_i+\bar{t_i}).
 \end{align*}
By  combining  with $0<t_i,\bar{t_i}<\pi$, we deduce
\begin{equation}\label{eq3.7}
|x_i\bar{x_i}|=\pi\quad {\rm and}\quad t_i+\bar{t_i}=\pi.
\end{equation}
By the  induction assumption and (\ref{eq3.3})--(\ref{eq3.7}), we know $\Lambda$ is isometric to
 $\mathbb S^{n-1}. $ Hence $\Sigma^n$ is isometric to $\mathbb S^n$.
\end{proof}
\be{lem}
{\label{lem3.3}{\rm(Perelman's concave function.)}\indent  Let $p\in M$. There exists a constant $r_1>0$
 and  a function $h: B_p(r_1)\to\R$ satisfying:\\
\indent (i)\indent $h$ is $(-1)$--concave;\\
\indent (ii)\indent$h$ is $2$-Lipschitz, that is, $h$ is Lipschitz continuous with a Lipschitz constant 2;\\
\indent (iii)\indent for each $x\in B_p(r_1)$,  we have
\begin{equation}\label{eq3.8}
\int_{\Sigma_x}d_xh(\xi)d\xi\ls0.
\end{equation}
Moreover, if $``="$ holds, then $x$ is regular.
}
\begin{proof}
Let us recall Perelman's construction in \cite{p94}. Fix a small $r_0>0$ and choose a maximal set of points
 $\{q_\alpha\}_{\alpha=1}^N\subset \partial B_p(r_0)$ with $\widetilde\angle q_\alpha pq_\beta>\delta$
  for $\alpha\not=\beta$, where $\delta$ is an arbitrarily (but fixed) small positive number $\delta\ll r_0$.
   By Bishop--Gromov volume comparison, there exists a constant $C_1$, which is independent of $\delta$, such that
\begin{equation}\label{eq3.9}
N\gs C_1\cdot \delta^{1-n}.
\end{equation}
 Consider the function
 $$h(y)=\frac{1}{N}\cdot\sum_{\alpha=1}^N\phi(|q_\alpha y|) $$
  on $B_p(r_1)$ with $0<r_1\ls\frac{1}{2}r_0$,
  where $\phi(t)$ is a real function with $\phi'(t)=1$ for $t\ls r_0-\delta$, $\phi'(t)=1/2$
  for $t\gs r_0+\delta$ and $\phi''(t)=-1/(4\delta)$ for $t\in(r_0-\delta,r_0+\delta).$

 The assertions  (i) and (ii) have been proved for some positive constant $r_1\ll r_0$ in \cite{p94}, (see also \cite{k02} for more details).
  The assertion (iii) is implicitly claimed in Petrunin's manuscript \cite{petu96}. Here we provide a proof as follows.

Let $x$ be a point near $p$.  It is clear that (\ref{eq3.8}) follows from Proposition \ref{prop3.1}
 and the above construction of $h$.  Thus we only need to consider the case of
 \begin{equation}\label{eq3.10}
 \int_{\Sigma_x}d_xh(\xi)d\xi=0.
 \end{equation}
  We want to show that $x$ is a regular point.

From $\angle q_\alpha pq_\beta \gs \widetilde\angle q_\alpha pq_\beta>\delta$ for $\alpha\not=\beta$ and
 the lower semi-continuity of angles (see Proposition 2.8.1 in \cite{bgp92}), we can assume $\angle q_\alpha xq_\beta\gs \delta/2$
  for $\alpha\not =\beta.$
  Proposition \ref{prop3.1} and (\ref{eq3.10}) imply that
  $$\int_{\Sigma_x}d_x{\rm dist}_{q_\alpha}(\xi)d\xi=0\qquad {\rm for\ each}\quad 1\ls \alpha\ls N.$$
Using Proposition \ref{prop3.1} again, we have
 \begin{equation}\label{eq3.11}
 \max_{\xi\in \Sigma_x}|\uparrow^{q_\alpha}_x\xi|=\pi\qquad {\rm for\ each}\quad 1\ls \alpha\ls N.
 \end{equation}
From Lemma \ref{lem3.2} and the arbitrarily small property of $\delta$, the combination of  (\ref{eq3.9})
 and (\ref{eq3.11}) implies  that $\Sigma_x$ is isometric to $\mathbb S^{n-1}$. Hence $x$ is regular.
\end{proof}

\section{Poisson equations and mean value inequality}
\subsection{Poisson equations}
Let $M$  be an $n$-dimensional  Alexandrov space and $\Omega$
 be a bounded domain in $M$. In \cite{kms01}, the canonical Dirichlet form $\mathscr E: W_0^{1,2}(\Omega)\times W_0^{1,2}(\Omega)\to \R$
  is defined by
 $$\mathscr E(u,v)=\int_\Omega\ip{\nabla u}{\nabla v}d{\rm vol}\qquad {\rm for}\ u,v\in W_0^{1,2}(\Omega).$$
 Given a function $u\in W_{loc}^{1,2}(\Omega)$,  we define  a functional $\mathscr L_u$ on $Lip_0(\Omega)$ by
$$\mathscr L_u(\phi)=-\int_\Omega\ip{\nabla u}{\nabla \phi}d{\rm vol},\qquad \forall \phi\in Lip_0(\Omega).$$

When a function $u$ is   $\lambda$-concave, Petrunin in \cite{petu09} proved that   $\mathscr L_u$ is signed Radon measure.
 Furthermore, if we write its Lebesgue's decomposition as
\begin{equation}
\label{eq4.1}\mathscr L_u=\Delta u\cdot{\rm vol}+\Delta^s u,
\end{equation}
 then  $\Delta^su\ls 0$ and
 \begin{equation}\label{eq4.2}
 \Delta u(p)=n\fint_{\Sigma_p}H_pu(\xi,\xi)d\xi\ls n\cdot\lambda
 \end{equation}
 for almost all points $p\in M$, where $H_pu$ is the Perelman's Hessian (see \eqref{eq2.6} or \cite{p-dc}).

 Nevertheless,  to study harmonic functions on Alexandrov spaces, we can not restrict our attention only on semi-concave functions.
  We have to consider the functional $\mathscr L_u$ for general functions  in $W^{1,2}_{loc}(\Omega).$

Let $f\in L^2(\Omega) $ and $u\in W_{loc}^{1,2}(\Omega)$. If the functional $\mathscr L_u$  satisfies
$$\mathscr L_u(\phi)\gs\int_\Omega f\phi d{\rm vol}\qquad \Big({\rm or}\quad \mathscr L_u(\phi)\ls\int_\Omega f\phi d{\rm vol}\Big)$$
 for all nonnegative $\phi\in Lip_0(\Omega)$, then, according to \cite{h89}, the functional $\mathscr L_u$ is a signed Radon  measure.
  In this case, $u$ is said to be a subsolution (supersolution, resp.) of Poisson equation $$\mathscr L_u=f\cdot{\rm vol}.$$

Equivalently,  $u\in W_{loc}^{1,2}(\Omega)$ is subsolution of $\mathscr L_u=f\cdot{\rm vol}$ if and only if
 it is a local minimizer of the energy
 $$\mathcal E(v)=\int_{\Omega'}\big(|\nabla v|^2+2fv\big)d{\rm vol}$$
  in the set of  functions $v$ such that $u\gs v$ and $u-v$ are  in $W^{1,2}_0(\Omega')$ for every fixed $\Omega'\Subset\Omega.$
   It is known (see for example \cite{k08}) that every continuous subsolution of $\mathscr L_u=0$ on $\Omega$ satisfies Maximum Principle,
    which states that $$\max_{x\in B}u\ls \max_{x\in\partial B}u$$ for any ball $B\Subset\Omega.$

 A function $u$ is a (weak) solution of Poisson equation $\mathscr L_u=f\cdot{\rm vol}$ on $\Omega$ if it is
 both a subsolution and a supersolution of the equation. In particular, a (weak) solution of $\mathscr L_u=0$ is called a harmonic function.

Now remark that $u$ is a (weak) solution of Poisson equation $\mathscr L_u=f\cdot{\rm vol}$ if and only if $\mathscr L_u$ is a signed Radon measure
 and its Lebesgue's decomposition $\mathscr L_u=\Delta u\cdot \rv+\Delta^su$ satisfies
$$\Delta u=f\qquad{\rm and}\qquad\Delta^su=0.$$

 Given a function $f\in L^2(\Omega)$ and $g\in W^{1,2}(\Omega)$, we can solve  Dirichlet problem of the equation
  \begin{equation*}
 \begin{cases}\mathscr L_u&=f\cdot{\rm vol}\\ u&=g|_{\partial \Omega}.\end{cases}
 \end{equation*}
  Indeed, by Sobolev compact embedding theorem (see \cite{hk00,kms01}) and a standard argument
   (see, for example, \cite{gt01}),  it is known that the solution of Dirichlet problem exists and  is unique  in
    $W^{1,2}(\Omega).$ (see, for example, Theorem 7.12 and Theorem 7.14 in \cite{c99}.) Furthermore, if we add the
     assumption $f\in L^s$ with $s>n/2$, then the solution is locally H\"older continuous  in $\Omega$ (see \cite{kshan01,kms01}).

\begin{defn}
A function $u\in C(\Omega)\cap W^{1,2}_{loc}(\Omega)$ is called a $\lambda$-superharmonic (or $\lambda$-subharmonic, resp.)
 on $\Omega$, if it satisfies the following comparison property: for every open subset $\Omega'\Subset \Omega$, we have
  $$\widetilde{u}\ls u,\qquad ({\rm or}\quad \widetilde{u}\gs u,  {\rm resp.}),$$
   where $\widetilde{u}$ is the (unique) solution of the equation  $\mathscr L_{\widetilde{u}}=\lambda\cdot\rv$
    in $\Omega'$ with boundary value $\widetilde{u}= u$ on $\partial \Omega'.$

In particular, a $0$-superharmonic (or, $0$-subharmonic, resp.) function is simply said a superharmonic (or, subharmonic, resp.) function.
\end{defn}

In partial differential equation theory, this definition is related to the  notion of viscosity solution (see \cite{c89}).

According to the maximum principle, we know that a continuous supersolution of $\mathscr L_u =0$ must be a superharmonic function.
 Notice that  the converse is not true in general metric measure space (see \cite{km02}). Nevertheless, we will prove a
  semi-concave superharmonic function on $M$ must be a supersolution   of $\mathscr L_u =0$ (see Corollary \ref{cor4.6} below).

\subsection{Mean value inequality for solutions of Poisson equations}

Let  $u\in W^{1,2}(\Omega)$  such  that $\mathscr L_u$ is a signed Radon measure on $\Omega$ and $A\Subset \Omega $ be an open set.
   We define a functional
$I_{u,A} $  on $W^{1,2}(A)$ by
\begin{equation}\label{eq4.3}
I_{u,A}(\phi)=\int_A\ip{\nabla u}{\nabla \phi}d\rv+\int_A\phi d\mathscr L_u.
\end{equation}
\be{rem}
{\label{rem4.2}(i)\  If $\phi_1,\phi_2\in W^{1,2}(A)$ and $\phi_1-\phi_2\in W_0^{1,2}(A)$, then, by the definition of
 $\mathscr L_u$, we have $I_{u,A}(\phi_1)=I_{u,A}(\phi_2).$\\
(ii)\  If $M$ is a smooth manifold and $\partial A$ is smooth, then $I_{u,A}(\phi)=\int_A{\rm div}(\phi\nabla u)d\rv.$
}
\be{lem}
{\label{lem4.3}
Let $0<r_0<R_0$ and  $w(x)=\varphi(|px|)$ satisfy $\mathscr L_w\gs0$ on some neighborhood of $B_p(R_0)\backslash B_p(r_0)$,
 where $\varphi\in C^2(\R)$. Consider a function
 $v\in W^{1,2}(B_p(R_0)\backslash \overline{B_p(r_0)})\cap L^\infty(\overline{B(p,R_0)}\backslash B(p,r_0)).$
  Then for almost all $r,R\in(r_0,R_0)$, we have
   $$I_{w,A}(v)=\varphi'(R)\int_{\partial B_p(R)}vd\rv-\varphi'(r)\int_{\partial B_p(r)}vd\rv,$$
    where $A=B_p(R)\backslash\overline{B_p(r)}.$
}
\begin{proof}Since $\mathscr L_w$ is a signed Radon measure, we have $\mathscr L_w\big(B_p(R_0)\backslash B_p(r_0)\big)<+\infty.$
 Hence, for almost all $r,R\in (r_0,R_0)$,  $\mathscr L_w(A_j\backslash A)\to0$ as $j\to\infty,$
  where $A_{j}= B_p(R+\frac{1}{j})\backslash B_p(r-\frac{1}{j})$.     Now let us fix such  $r$ and $R$.

Let $ v_j=v\cdot \eta_j(|px|)\in W^{1,2}_0(D),$ where $D=B_p(R_0)\backslash\overline{B_p(r_0)}\ $  and  \begin{align*}
\eta_j(t):=\begin{cases}1&{\rm if}\quad t\in[r,R]\\ j\cdot(t-r)+1&{\rm if}\quad t\in[r-\frac{1}{j},r]\\
 -j\cdot(t-R)+1&{\rm if}\quad t\in[R,R+\frac{1}{j}]\\
 0&{\rm if}\quad t\in (-\infty,r-\frac{1}{j})\cup(R+\frac{1}{j}, \infty).\end{cases}
\end{align*}
By the definitions of $I_{w,A}(v)$ and $\mathscr L_w$, we have
\begin{align}\label{eq4.4}
I_{w,A}(v)&=\int_D\ip{\nabla w}{\nabla v_j}d\rv-\int_{D\backslash A}\ip{\nabla w}{\nabla v_j}d\rv+\int_Dv_jd\mathscr L_w-\int_{D\backslash A}v_jd\mathscr L_w\\ \nonumber&=-\int_{D\backslash A}v\ip{\nabla w}{\nabla \eta_j}d\rv-\int_{D\backslash A}\eta_j\ip{\nabla w}{\nabla v}d\rv-\int_{D\backslash A}v_jd\mathscr L_w\\ \nonumber&:=-J_1-J_2-J_3.
\end{align}
Notice that
\begin{equation*}  |J_2|\ls \int_{A_j\backslash A}|\nabla w|\cdot|\nabla v|d\rv  \quad {\rm and}\quad|J_3|\ls \mathscr L_w(A_j\backslash A)\cdot|v|_{L^\infty(D)},\end{equation*} Hence we have $J_2\to 0$ and $J_3\to0$ as $j\to\infty.$
\begin{equation}
\label{eq4.5}
J_1=j\cdot\int_{B_p(r)\backslash B_p(r-1/j)}v\varphi'd\rv-j\cdot\int_{B_p(R+1/j)\backslash B_p(R)}v\varphi'd\rv.
\end{equation}

The assumption  $v\in L^\infty(D)$ implies the function $h(t)=\int_{B_p(t)}vd\rv$ is Lipschitz continuous in $(r_0,R_0)$.
 Indeed, for each $r_0<s<t<R_0$,
$$|h(t)-h(s)|\ls \int_{B_p(t)\backslash B_p(s)}|v|d\rv\ls |v|_{L^\infty}\cdot \rv\big(B_p(t)\backslash B_p(s)\big)\ls c\cdot(t^n-s^n),$$
 where constant $c$ depends only on $R_0,\ n$ and the lower bounds of curvature on $B_p(R_0)$.
 Then $h(t)$ is differentiable almost all $t\in(r_0,R_0)$. By co-area formula, we have
$$h'(t)=\int_{\partial B_p(t)}vd\rv$$for almost all $t\in(r_0,R_0).$

Without loss of generality, we may assume that $r$ and $R$ are differentiable points of function $h$. Now
\begin{equation*}
\begin{split}\Big|j\int_{B_p(r)\backslash B_p(r-1/j) }\varphi'vd\rv&-\varphi'(r)\cdot j\Big(h(r)-h(r-1/j)\Big)\Big|\\
&\ls \int_{B_p(r)\backslash B_p(r-1/j)}\max |\varphi''|\cdot|v|d\rv\to 0
\end{split}\end{equation*}
as $j\to\infty.$ The similar estimate also holds for  $ j\int_{B_p(R+1/j)\backslash B_p(R) }\varphi'vd\rv $. Therefore,
\begin{equation*}\begin{split}\lim_{j\to\infty} J_1&=\lim_{j\to\infty}\varphi'(r)\cdot j\Big(h(r)-h(r-1/j)\Big)
-\lim_{j\to\infty}\varphi'(R)\cdot j\Big(h(R+1/j)-h(R)\Big)\\
&=\varphi'(r)h'(r)-\varphi'(R)h'(R).\end{split}
\end{equation*}
 By combining this and (\ref{eq4.4}),  we get the desired assertion.
\end{proof}

 If $M$ has $Ric\gs (n-1)k$, then for a distance function $dist_p(x):=|px|$,
Laplacian comparison (see Theorem 1.1 and Corollary 5.9 in \cite{ks07}) asserts  that  $\mathscr L_{dist_p}$
 is a signed Radon measure and
 $$\mathscr L_{dist_p}\ls (n-1)\cdot\cot_k\circ dist_p\cdot{\rm vol}\qquad {\rm on}
\quad M\backslash\{p\}.$$
 Moreover, $G(x):=\phi_k(|px|)$ is defined on $M\backslash\{p\}$ and
 $$\mathscr L_G\gs0 \qquad {\rm on}\quad M\backslash\{p\},$$
 where  $\phi_k(r)$ is the real value function such that $\phi\circ dist_o$ is the Green function on
 $\mathbb M^n_k$ with singular
point $o$. That is, if $n\gs3$,
 \begin{equation}
\label{eq4.6}\phi_k(r)=\frac{1}{(n-2)\cdot\omega_{n-1}}\int_r^{\infty}s^{1-n}_k(t)dt,
\end{equation} where $\omega_{n-1}={\rv}
(\mathbb S^{n-1})$ and
 \begin{equation*}s_k(t)=\begin{cases}\sin(\sqrt kt)/\sqrt k& \quad k>0\\t&\quad k=0\\ \sinh(\sqrt{- k}t)/\sqrt{- k}& \quad k<0.\end{cases}
\end{equation*}
If $n=2$, the function $\phi_k$ can be given similarly.

By applying the Lemma \ref{lem4.3} to  function $G$, we have the following mean value inequality for nonnegative supersolution
 of Poisson equation.
\be{prop}{\label{prop4.4}
Let $M$ be an $n$--dimensional Alexandrov space with $Ric\gs (n-1)k$ and $\Omega$ be a bounded domain in $M$.
 Assume that $f\in L^\infty(\Omega)$ and $u$ is a continuous, nonnegative supersolution of Poisson equation
  $\mathscr L_u=f\cdot{\rm vol}$ on $\Omega$. Then for any ball $B_p(R)\Subset \Omega,$ we have \begin{align}
\label{eq4.7}\frac{{\rm vol}(\Sigma_p)}{\omega_{n-1}}&\bigg(\frac{1}{H^{n-1}(\partial B_o(R)\subset T^k_p)}\int_{\partial B_p(R)}ud\rv- u(p)\bigg)\\
\nonumber&\ls(n-2)\cdot\int_{B^*_p(R)}Gfd{\rm vol}-(n-2)\cdot\phi_k(R)\int_{B_p(R)}fd{\rm vol},
\end{align}
where $B^*_p(R)=B_p(R)\backslash\{p\}$ and $T_p^k$ is the $k$-cone over $\Sigma_p$ (see \cite{bbi01} p. 354).}
\begin{proof}
For simplicity, we only give a proof for the case $n\gs3$.  A slight modification of the argument will prove the case $n=2.$\\

\emph{Case 1:} Assume that $u$ is a solution of $\mathscr L_u=f\cdot\rv$.

Let $G(x)=\phi_k(|px|)$, where the real function $\phi_k$ is chosen such that $\phi_k(|ox|)$ is the Green function on $\M^n_k$
 with singular point at $o$. Then, by Laplacian comparison theorem (see \cite{ks07} or \cite{zz10}),
  the signed Radon measure $\mathscr L_G$ is nonnegative on $M\backslash\{p\}$.

Since $u$ is continuous on $B_p(R)$, the function $h(s)=\int_{B_p(s)}ud{\rm vol}$ is Lipschitz. From Lemma \ref{lem4.3}, we have $$I_{G,A}(u)=\phi_k'(t)\cdot h'(t)-\phi_k'(s)\cdot h'(s)$$ for almost all $s,t\in (0,R)$ with $s<t$, where
$A=B_p(t)\backslash\overline{B_p(s)}.$
By the definition of supersolution of Poisson equation, we have
$$I_{G,A}(u)-I_{u,A}(G)=\int_Aud\mathscr L_G-\int_AGd\mathscr L_u\gs-\int_AGfd{\rm vol}.$$
On the other hand, letting
 \begin{align*}\bar G(x)=\begin{cases}G(x)&{\rm if}\quad s\ls|px|\ls t\\
\phi_k(t)&{\rm if}\quad |px|> t\\ \phi_k(s)&{\rm if}\quad |px|< s,\end{cases}\end{align*}
we have \begin{align*}\int_A\ip{\nabla G}{\nabla u}&=\int_{B_p(t)}\ip{\nabla (\bar G-\phi_k(t))}{\nabla u}-\int_{B_p(s)}\ip{\nabla (\bar G-\phi_k(s))}{\nabla u}\\&=-\int_AGd{\mathscr L_u}+\phi_k(t)\int_{B_p(t)}d{\mathscr L_u}-\phi_k(s)\int_{B_p(s)}d{\mathscr L_u}.
\end{align*}
  Hence, by $\mathscr L_u=f\cdot\rv$,
   $$I_{u,A}(G)=\phi_k(t)\int_{B_p(t)}fd{\rm vol}-\phi_k(s)\int_{B_p(s)}fd{\rm vol}.$$
   If we set
   $$\psi(\tau)=\phi_k'(\tau)\cdot h'(\tau)-\phi_k(\tau)\int_{B_p(\tau)}fd{\rm vol},$$
   then  the function
   $$\psi(\tau)+\int_{B_p^*(\tau)}Gfd{\rm vol}$$
   is nondecreasing  with respect to $\tau$ (for almost all $\tau\in (0,R)$). Indeed,   for almost all $s<t$,
   $$\psi(t)+\int_{B_p^*(t)}Gfd{\rm vol}-\psi(s)-\int_{B_p^*(s)}Gfd{\rm vol}=I_{G,A}(u)-I_{u,A}(G)+\int_AGfd\rv\gs0.$$
 Thus by
 $$\phi'_k(t)= -s^{1-n}_k(t)\cdot\frac{1}{(n-2)\omega_{n-1}}
 =-\frac{1}{n-2}\cdot\frac{\rv(\Sigma_p)}{\omega_{n-1}}\cdot\frac{1}{H^{n-1}(\partial B_o(t)\subset T_p^k)},$$ we have \begin{align*}\phi_k'(t)h'(t)-\phi_k(t)\int_{B_p(t)}fd\rv+ \int_{B_p^*(t)}Gf\rv&\gs \lim_{s\to0}\Big(\psi(s)+\int_{B_p^*(s)}Gfd\rv\Big)\\&=-\frac{1}{n-2}
 \cdot\frac{{\rm Vol}(\Sigma_p)}{\omega_{n-1}}u(p)\end{align*}
By combining this
  and $h'(s)= \int_{\partial B_p(r)}ud\rv$ a.e. in $(0,R)$, we obtain that  (\ref{eq4.7}) holds for almost all $r\in(0,R)$.

By combining the Bishop--Gromov inequality on spheres (see \cite{bbi01} or Lemma 3.2 of \cite{ks07}), the assumption
  $u\gs0$ and the continuity of $u$,  we have  \begin{equation}
\label{eq4.8}\liminf_{r\to R^-}\int_{\partial B_p(r)}ud\rv\gs \int_{\partial B_p(R)}ud\rv.
\end{equation} Therefore, we get the desired result for this case.\\

\emph{Case 2:}   $u$ is a supersolution of $\mathscr L_u=f\cdot\rv$.

For each $R>0$, let $ \widetilde{u}$ be a solution of $\mathscr L_{  \widetilde{u}}=f\cdot\rv$ on $B_p(R)$ with boundary value condition
$  \widetilde{u}=u$ on $\partial B_p(r)$. Since $\mathscr L_{\widetilde{u}-u}\gs0$, by maximal principle, we get $\widetilde u(p)\ls u(p)$. Therefore, by applying  $Case\ 1 $ to $\widetilde{u}$, we get the desired estimate.
\end{proof}

\be{cor}
{\label{cor4.5}
Let $M$, $\Omega$, $u$ and $f$ be as above.  If $p$ is a Lebesgue point of  $f$, i.e.,
\begin{equation}
\label{eq4.9}\fint_{B_p(R)}fd\rv=f(p)+o(1),
\end{equation}
then
$$\frac{1}{H^{n-1}(\partial B_o(R)\subset T^k_p)}\int_{\partial B_p(R)}u(x)d\rv\ls u(p)+\frac{ f(p)}{2n}\cdot R^{2}+o(R^{2}).$$
}
\begin{proof}
By using \eqref{eq4.7}, we have  \begin{equation}
\label{eq4.10} \frac{1}{H^{n-1}(\partial B_o(R)\subset T^k_p)}\int_{\partial B_p(R)}ud\rv-u(p)\ls (n-2)\cdot\frac{\omega_{n-1}}{\rv(\Sigma_p)}\cdot\varrho(R),
\end{equation}
where
\begin{equation*}\begin{split}
\varrho(R)&=\int_{B^*_p(R)}Gfd{\rm vol}-\phi_k(R)\int_{B_p(R)}fd{\rm vol}\\
&=\int_0^R\int_{\partial B_p(s)}\phi_k(s)f-\phi_k(R)\int_0^R\int_{\partial B_p(s)}f.
\end{split}\end{equation*}
 Hence, by (\ref{eq4.9}), we have
 \begin{align*}\varrho'(R)&=-\phi_k'(R)\int_{B_p(R)}fd\rv\\ \nonumber&=
\frac{\rv(\Sigma_p)}{(n-2)\omega_{n-1}}\cdot\frac{\int_0^Rs_k^{n-1}(r)dr}{s_k^{n-1}(R)}
\cdot\frac{\rv(B_p(R))}{H^n(B_o(R)\subset T^n_k)}\fint_{B_p(R)}fd\rv\\
\nonumber&=\frac{\rv(\Sigma_p)}{(n-2)\omega_{n-1}}\cdot\big(\frac{R}{n}+o(R)\big)\cdot(1+o(1))\cdot(f(p)+o(1))\\
\nonumber&=\frac{\rv(\Sigma_p)}{n(n-2)\omega_{n-1}}f(p)\cdot R+o(R).\end{align*}
Hence, noting that $\rho(0)=0$, we get
\begin{equation}\label{eq4.11}
\rho(R)=\frac{\rv(\Sigma_p)}{2n(n-2)\omega_{n-1}}f(p)\cdot R^2+o(R^2).
\end{equation}
Therefore, the desired result follows from (\ref{eq4.10}) and (\ref{eq4.11}).
\end{proof}
\be{cor}
{\label{cor4.6}
Let $M$ be an $n$-dimensional Alexandrov space with $Ric\gs (n-1)k$ and $\Omega$ be a bounded domain in $M$.
 Let $u$ be a semi-concave function on $\Omega$ and $f\in L^\infty(\Omega)$.
Then $u$ is a supersolution of $\mathscr L_u=f\cdot\rv$ provided it satisfies the property:
for each  point $p\in Reg_u$ and every sufficiently small ball $B_p(R)\Subset \Omega$, we have
\be{equation}
{\label{eq4.12}\widetilde{u}_R-u\ls0,}
where the function $\widetilde{u}_R$ is the (unique) solution of Dirichlet problem :
\begin{equation*}\begin{cases}
\mathscr L_{\widetilde{u}_R}=f\cdot\rv&\quad {\rm in}\ B_p(R)\\ \widetilde{u}_R=u&\quad {\rm on}\ \ \partial B_p(R).\end{cases}
\end{equation*}

In particular, a semi-concave superharmonic function  must be a supersolution of  the equation $\mathscr L_u=0.$}
 \begin{proof}Since the singular part of $\mathscr L_u$ is non-positive,
we need only to consider its absolutely continuous part $\Delta u\cdot\rv$.

Fix a point $p\in Reg_u$ such that (\ref{eq4.2}) holds and $p$ is a Lebesgue point of  $f$. Since the set of such points has
full measure in $\Omega$, we need only to show that $\Delta u(p)\ls f(p).$

We set $$g_R(x)=u(x)-\min_{x\in\overline{B_p(R)}}\widetilde{u}_R(x)\quad {\rm and }\quad \widetilde{g}_R(x)=\widetilde{u}_R(x)-\min_{x\in\overline{B_p(R)}}\widetilde{u}_R(x).$$
Then $\widetilde g_R\ls g_R$ and  $\widetilde g_R|_{\partial B_p(R)}\ls g_R|_{\partial B_p(R)}$.
 Noting that the functions $\widetilde{g}_R$ is nonnegative and $\mathscr L_{\widetilde {g}_R}=f\cdot\rv$.
 By  Corollary \ref{cor4.5} and  $p$ is regular, we have
\begin{align}\label{eq4.13}
\int_{\partial B_p(R)}g_R& =\int_{\partial B_p(R)}\widetilde{g}_R \ls H^{n-1}(\partial B_o(R)\subset T_p^k)\cdot\Big( \widetilde{g}_R(p)+\frac{f(p)}{2n}R^{2} +o(R^{2})\Big)\\&\ls\nonumber g_R(p)\cdot H^{n-1}(\partial B_o(R)\subset T_p^k)+\frac{f(p)}{2n}R^{n+1}\cdot\omega_{n-1}+o(R^{n+1}).
\end{align}
On the other hand, since $p\in Reg_{g_R}$, from (\ref{eq2.15}) and (\ref{eq4.2}), we have
\begin{equation}\label{eq4.14}
\int_{\partial B_p(R)}g_R= g_R(p)\cdot\rv(\partial B_p(R))+\frac{\Delta g_R(p)}{2n}R^{2}\cdot\rv(\partial B_p(R))+o(R^{n+1})
\end{equation}
for almost all $R\in(0,\delta_0)$, where $\delta_0$ is a small positive number.
Because  $p$ is a smooth point, Lemma \ref{lem2.1} implies
\begin{equation}\label{eq4.15}
H^{n-1}(\partial B_o(R)\subset T_p^k)-\rv(\partial B_p(R))=
 o(R^{n})\end{equation}
for almost all $R\in(0,\delta_0)$.

Now we want to show $g_R(p)=O(R)$. Noticing  that  (\ref{eq4.12}) and the fact that $u$ is locally Lipshitz
 (since $u$ is semi-concave), we have
 \begin{equation}
 \begin{split}\label{eq4.16}
0\ls g_R(p)&=u(p)-\min_{x\in\partial \overline{B_p(R)}}\widetilde{u}_R(x)+\min_{x\in\partial \overline{B_p(R)}}\widetilde{u}_R(x)
-\min_{x\in \overline{B_p(R)}}\widetilde{u}_R(x)\\
    & \ls C_1R+\min_{x\in\partial \overline{B_p(R)}}\widetilde{u}_R(x)-\min_{x\in \overline{B_p(R)}}\widetilde{u}_R(x).
    \end{split}
    \end{equation}
  Since $R$ is sufficiently small, there exists the Perelman concave function $h$ on $B_p(2R)$ given in Lemma \ref{lem3.3}.
   We have
  $$\mathscr L_{\widetilde{u}_R+\|f\|_{L^\infty}\cdot h}\ls\mathscr L_{\widetilde{u}_R} -\|f\|_{L^\infty}\ls  0.$$
Hence, by applying maximal principle, we have for any  point $x\in \overline{B_p(R)}$,
\begin{equation*}\begin{split}
  \widetilde{u}_R(x)+\|f\|_{L^\infty}h(x)&\gs \min_{x\in\partial \overline{B_p(R)} }\Big(\widetilde{u}_R(x)+\|f\|_{L^\infty}h(x)\Big)\\ &\gs \min_{x\in\partial \overline{B_p(R)} }\widetilde{u}_R(x)+\|f\|_{L^\infty}\min_{x\in\partial \overline{B_p(R)} }h(x).
  \end{split}
  \end{equation*}
Since $h$ is Lipschitz continuous, this implies that
$$\min_{x\in\partial \overline{B_p(R)}}\widetilde{u}_R(x)
-\widetilde{u}_R(x)\ls \|f\|_{L^\infty}\big(h(x)-\min_{x\in\partial \overline{B_p(R)} }h(x)\big)\ls C_2R$$
for any point $x\in \overline{B_p(R)}$.
The combination of this and \eqref{eq4.16} implies \begin{equation}\label{eq4.17} g_R(p)=O(R).\end{equation}
By combining  (\ref{eq4.13})--(\ref{eq4.15}) and \eqref{eq4.17}, we have
$$\frac{\Delta g_R(p)}{2n}R^2\cdot\rv(\partial B_p(R))- \frac{ f(p)}{2n}\omega_{n-1}R^{n+1}\ls O(R)\cdot o(R^n)+o(R^{n+1})=o(R^{n+1})$$
for almost all $R\in(0,\delta_0)$. Hence, $\Delta g_R(p)\ls f(p)$.
Therefore, $\Delta u(p)\ls f(p),$ and the proof of the corollary is completed.
\end{proof}

\subsection{Harmonic measure}

In this subsection, we basically follow Petrunin in \cite{petu96} to consider harmonic measure.

 \be{lem}
{{\rm (Petrunin \cite{petu96})}\label{lem4.7} \indent Let $M$ be an $n$--dimensional Alexandrov space with $Ric\gs (n-1)k$
 and $\Omega$ be a bounded domain in $M$. If $u$ is a nonnegative harmonic function on $\Omega$, then for any ball $B_p(R)\Subset\Omega$, we have
\begin{equation}\label{eq4.18}u(p)\gs \frac{1}{\rv(\Sigma_p)\cdot s_k^{n-1}(R)}\int_{\partial B_p(R)}ud\rv.\end{equation}
}
\begin{proof}By the definition, $u$ is harmonic if and only if it is a solution of equation $\mathscr L_u=0.$ Now the result follows from (\ref{eq4.7}) with $f=0.$
\end{proof}

Consider an $n$-dimensional Alexandrov space $M$ and a ball $B_p(R)\subset M$.
In order to define a new measure $\nu_{p,R}$ on $B_p(R),$  according to \cite{h89}, we need only to define a positive functional on $Lip_0(B_p(R))$.

Now fix a  nonnegative function $\varphi\in Lip_0(B_p(R))$. First we define a function $\mu:(0,R)\to\R$ as follows:
 for each $r\in(0,R)$, define $$\mu(r):=u_r(p),$$
 where $u_r$ is the (unique) solution of Dirichlet problem $\mathscr L_{u}=0$ in $B_p(r)$ with boundary value $u=\varphi$
  on $\partial B_p(r).$
\begin{lem}\label{lem4.8}There exists $R>0$ such that $\mu(r)$ is continuous in $(0,R).$\end{lem}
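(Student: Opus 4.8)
The plan is to prove that $\mu$ is continuous at every $r_0\in(0,R)$ by a compactness argument for the Dirichlet problems on the nested balls $B_p(r)$; here $R>0$ is taken small enough that $\overline{B_p(R)}$ lies in a convex neighbourhood carrying a Perelman concave function as in Lemma~\ref{lem3.3}. Throughout, extend each $u_r$ to $B_p(R)$ by setting $u_r=\varphi$ on $B_p(R)\setminus B_p(r)$. By the maximum principle $0\ls u_r\ls\max\varphi$, and since $u_r$ minimises the Dirichlet energy among competitors with boundary datum $\varphi$ one has $\int_{B_p(r)}|\nabla u_r|^2\,d\rv\ls\int_{B_p(R)}|\nabla\varphi|^2\,d\rv$; hence $\{u_r-\varphi\}_{r<R}$ is bounded in $W^{1,2}_0(B_p(R))$.

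Fix $r_j\to r_0$. By the interior H\"older estimate for harmonic functions (Nash--Moser iteration) together with the uniform $L^\infty$-bound, the $u_{r_j}$ are equicontinuous on every ball $B_p(\rho)$ with $\rho<r_0$; by Arzel\`a--Ascoli and a diagonal argument some subsequence converges locally uniformly on $B_p(r_0)$ to a harmonic function $v$, and by the energy bound we may assume in addition that $u_{r_j}-\varphi\rightharpoonup v-\varphi$ weakly in $W^{1,2}_0(B_p(R))$. By uniqueness of the Dirichlet problem it then suffices to show $v-\varphi\in W^{1,2}_0(B_p(r_0))$, for then $v=u_{r_0}$ and $\mu(r_j)=u_{r_j}(p)\to v(p)=u_{r_0}(p)=\mu(r_0)$; as every subsequence has a further such subsequence, this gives continuity at $r_0$. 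If $r_j\ua r_0$ this is immediate, since $u_{r_j}-\varphi\in W^{1,2}_0(B_p(r_j))\subset W^{1,2}_0(B_p(r_0))$ and the latter is a closed, hence weakly closed, subspace. If $r_j\downarrow r_0$, then $u_{r_j}-\varphi$ vanishes a.e.\ on $B_p(R)\setminus B_p(r_j)$, so the weak limit $w:=v-\varphi$ vanishes a.e.\ on $B_p(R)\setminus\overline{B_p(r_0)}$, and since $\partial B_p(r_0)$ is Lebesgue-null, $w$ is a $W^{1,2}_0(B_p(R))$-function vanishing a.e.\ off $B_p(r_0)$; the required conclusion $w\in W^{1,2}_0(B_p(r_0))$ is exactly the assertion that $B_p(r_0)$ is a regular domain for the Dirichlet problem.

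I expect this last point --- regularity of $B_p(r_0)$, i.e.\ the existence of a barrier at each $q\in\partial B_p(r_0)$ --- to be the only genuinely delicate step. When some geodesic $pq$ prolongs slightly beyond $q$ (which holds for all $q\in W_p$, a set of full measure), one gets $z$ with $|pz|=|pq|+|qz|$, so $B_z(|qz|)\cap B_p(r_0)=\varnothing$; then $\beta_q(x):=\phi_k(|qz|)-\phi_k(|zx|)$ is nonnegative on $B_p(r_0)$, vanishes at $q$, is comparable to $|qx|$ near $q$, and is superharmonic since $\mathscr L_{\phi_k(|z\,\cdot\,|)}\gs0$ on $M\setminus\{z\}$ by Laplacian comparison --- a barrier at $q$. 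For the exceptional, cone-vertex type boundary points one instead checks that $M\setminus B_p(r_0)$ has positive capacity density at $q$, which with the doubling property and Poincar\'e inequality on $M$ again yields regularity; alternatively, Perelman's $(-1)$-concave function of Lemma~\ref{lem3.3}, which is superharmonic because $\mathscr L_h\ls-n\,\rv$, can be used to manufacture barriers directly. Granting the barrier, the compactness scheme above closes the argument.
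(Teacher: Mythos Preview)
Your compactness approach is a legitimate alternative to the paper's, but the step you flag as delicate has a genuine gap, and the paper's route is both different and shorter. The paper first invokes Lemma~11.2 of \cite{bgp92}: there is $R>0$ such that \emph{every} $x\in B_p(R)\setminus\{p\}$ admits $x_1$ with $\wa pxx_1>\tfrac{99}{100}\pi$ and $|px_1|\gs 2|px|$. This yields a uniform exterior-ball condition on $B_p(r)$ for all $r\in(0,R)$, at every boundary point, which via Bj\"orn's boundary regularity result \cite{bj02} makes each $u_r$ H\"older continuous on $\overline{B_p(r)}$. Then for $0<r_1<r_2<R$ the maximum principle gives
\[
|\mu(r_1)-\mu(r_2)|\ \ls\ \max_{x\in\partial B_p(r_1)}|\varphi(x)-u_{r_2}(x)|,
\]
and the right side tends to $0$ as $r_2-r_1\to0^+$ by the H\"older continuity of $u_{r_2}$ up to the boundary together with the Lipschitz continuity of $\varphi$. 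No compactness or $W^{1,2}_0$-identification is needed.

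Your gap lies precisely at the points $q\in\partial B_p(r_0)\setminus W_p$. Having a barrier at the $W_p$-points is not enough: $W_p$ has full $n$-dimensional measure in $M$, but you have no control on the size of $W_p\cap\partial B_p(r_0)$ inside the $(n-1)$-dimensional sphere, and the capacity-density criteria that give the $W^{1,2}_0$-characterisation in the doubling--Poincar\'e setting require uniform thickness of the complement at \emph{every} boundary point. Your two proposed patches do not work as stated: you assert capacity density at the exceptional points without argument, and the Perelman function $h$ of Lemma~\ref{lem3.3}, while indeed superharmonic, has no prescribed boundary behaviour making it a barrier at a specific $q\in\partial B_p(r_0)$. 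The missing geometric input is exactly the BGP lemma above---it gives the exterior-ball (hence measure-density, hence capacity-density) condition at \emph{all} boundary points and would close your scheme as well; but once one has it, the paper's direct estimate via the maximum principle is the cleaner path.
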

\begin{proof}
From Lemma 11.2 in \cite{bgp92}, we know that there exists $R>0$ such that, for all $x\in B_p(R)\backslash\{p\}$,
 we can find a point $x_1$ satisfying
  $$\wa pxx_1>\frac{99}{100}\pi \quad {\rm and}\quad|px_1|\gs 2|px|.$$
In particular, this implies, for each $r\in(0,R)$, that $B_p(r)$ satisfies an exterior ball condition in the following sense:
 there exists $C>0$ and $\delta_0>0$ such that for all $x\in \partial B_p(r)$ and $0<\delta<\delta_0$,
  the set $B_{x}(\delta)\backslash B_p(r)$ contains a ball with radius $C\delta.$ Indeed, we can choose $x_2$ in geodesic $xx_1$ with $|xx_2|=\delta/3$ (with $\delta\ls r/10$). The monotonicity  of comparison angles says that $\wa pxx_2\gs \wa pxx_1\gs \frac{99}{100}\pi$.
   This concludes $|px_2|\gs |px|+\delta/6$. Therefore, $B_{x_2}(\delta/6)\subset B_x(\delta)\backslash B_p(r).$

Since $\varphi$ is Lipschitz continuous on $B_p(r)$, Bj\"orn in \cite{bj02} (see Remark 2.15 in \cite{bj02}) proved that $u_r$ is H\"older continuous on $\overline{B_p(r)}$.

For any $0<r_1<r_2<R$, by using maximum principle, we have
$$|\mu(r_1)-\mu(r_2)|\ls \max_{x\in \partial B_p(r_1)}|u_{r_1}(x)-u_{r_2}(x)|=\max_{x\in \partial B_p(r_1)}|\varphi(x)-u_{r_2}(x)|.$$
 By combining with the H\"older continuity of $\varphi$ and $u_{r_2}$, we have that $|\mu(r_1)-\mu(r_2)|\to0$ as $r_2-r_1\to0^+$.
  Hence $\mu(r)$ is continuous.
\end{proof}
\begin{rem}\label{rem4.9}
If $p$ is a regular point, then the constant $R$ given in Lemma \ref{lem4.8} can be chosen uniformly in a neighborhood of $p$.

 Indeed, there exists a neighborhood of $p$, $B_p(R_0)$ and a bi-Lipschitz homeomorphism $F$ mapping $B_p(R_0)$ to an open domain
  of $\mathbb R^n$ with bi-Lipschitz constant $\ls 1/100$. Then for  each ball $B_{q}(r)\subset B_p(R_0/4)$ with $r\ls R_0/4$ and
   $x\in\partial B_q(r)$,  let $x'\in \mathbb R^n$ such that
$$|x'F(x)|=|F(q)F(x)|\quad {\rm and}\quad|x'F(q)|=|F(q)F(x)|+|F(x)x'|=2|F(q)F(x)|,$$
we have
$$|qF^{-1}(x')|\gs \frac{99}{100}|F(q)x'|
\gs 2\Big(\frac{99}{100}\Big)^2|xq|\qquad {\rm and}\qquad |x'F(x)|\ls \Big(\frac{101}{100}\Big)^2|xq|.$$
Hence, it is easy check that $B_q(r)$ satisfies  an exterior ball condition as above (the similar way as above).
Therefore,   the constant $R$ in Lemma \ref{lem4.8} can be choose $R_0/4$ for all $q\in B_p(R_0/4).$
\end{rem}

 Now we can define the functional $\nu_{p,R}$ by
 $$\nu_{p,R}(\varphi)=\frac{\int_0^Rs_k^{n-1}\mu(r)dr}{\int_0^Rs_k^{n-1}dr}.$$
From Lemma \ref{lem4.7}, we have $\mu_r\gs0$, and   $\nu_{p,R}(\varphi)\gs0.$ Hence, it provides a Radon measure on $B_p(R).$
 Moreover, it is a  probability measure and by \eqref{eq4.18},
\begin{equation}\label{eq4.19}
\nu_{p,R}\gs \frac{\rv}{H^n(B_o(R)\subset T^k_p)}.
\end{equation}

Let $u$ be a harmonic function on $\Omega$. Then for any ball $B_p(R)\Subset \Omega$, we have
 \begin{equation}\label{eq4.20}
 u(p)=\int_{B_p(R)}u(x)d\nu_{p,R}.
 \end{equation}

The following strong maximum principle was proved in an abstract framework of Dirichlet form by Kuwae in \cite{k08} and
 Kuwae--Machiyashira--Shioya in \cite{kms01}. In metric  spaces supporting a doubling measure and a Poincar\'e inequality,
  it was proved by Kinnunen--Shanmugalingan in \cite{kshan01}. Here, by (\ref{eq4.20}), we give a short proof  in Alexandrov spaces.
\begin{cor}\label{cor4.10}{\rm (Strong Maximum Principle)}\indent Let $u$ be a subharmonic function on a bounded and connected
  open domain $\Omega$. Suppose there exists a point $p\in \Omega$ for which  $u(p)=\sup_{x\in \Omega}u.$ Then  $u$ is  constant.
\end{cor}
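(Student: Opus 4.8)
The plan is to exploit the sub-mean-value property encoded in the harmonic measure representation \eqref{eq4.20} together with the lower bound \eqref{eq4.19} on $\nu_{p,R}$ in terms of the Hausdorff measure $\rv$. First I would set $S=\{x\in\Omega:\ u(x)=\sup_{y\in\Omega}u\}=:\{u=m\}$. Since $u$ is subharmonic it is (by definition in the paper) continuous on $\Omega$, so $S$ is relatively closed in $\Omega$, and it is nonempty by hypothesis. The goal is to show $S$ is also open; then connectedness of $\Omega$ forces $S=\Omega$, i.e.\ $u\equiv m$.

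To see that $S$ is open, fix $p\in S$ and choose $R>0$ small enough that $B_p(R)\Subset\Omega$ and the harmonic measure $\nu_{p,R}$ is defined (this uses Lemma \ref{lem4.8}, which guarantees such an $R$ exists at every point). Applying the representation \eqref{eq4.20} to $u$ — more precisely, its subharmonic analogue $u(p)\ls\int_{B_p(R)}u\,d\nu_{p,R}$, which follows from the definition of subharmonicity exactly as \eqref{eq4.20} follows for harmonic functions, since for a subharmonic $u$ the solution $u_r$ of the Dirichlet problem on $B_p(r)$ with boundary data $u$ satisfies $u_r\gs u$, hence $u_r(p)=\mu(r)\gs u(p)$ — we get
\begin{equation*}
m=u(p)\ls \int_{B_p(R)}u\,d\nu_{p,R}\ls \int_{B_p(R)}m\,d\nu_{p,R}=m,
\end{equation*}
because $u\ls m$ everywhere and $\nu_{p,R}$ is a probability measure. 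Therefore equality holds throughout, which forces $\int_{B_p(R)}(m-u)\,d\nu_{p,R}=0$ with $m-u\gs0$, so $u=m$ holds $\nu_{p,R}$-almost everywhere on $B_p(R)$.

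Now I would upgrade this to $u\equiv m$ on all of $B_p(R)$. By \eqref{eq4.19} we have $\nu_{p,R}\gs \rv/H^n(B_o(R)\subset T^k_p)$, so $u=m$ holds $\rv$-almost everywhere on $B_p(R)$; since $u$ is continuous and $\rv$ has full support, $u\equiv m$ on $B_p(R)$. Hence $B_p(R)\subset S$, proving $S$ is open. Connectedness of $\Omega$ then gives $S=\Omega$ and $u$ is constant. The main obstacle is the careful justification that the harmonic-measure inequality \eqref{eq4.20}, stated there for harmonic functions, holds in the one-sided form $u(p)\ls\int_{B_p(R)}u\,d\nu_{p,R}$ for subharmonic $u$; this is where one must invoke the comparison $u_r\gs u$ (valid because $u-u_r$ is subharmonic with zero boundary values, so $\ls0$ by the maximum principle quoted after the definition of subsolution), together with Lemma \ref{lem4.7} applied to the nonnegative harmonic functions $u_r-\min_{\partial B_p(r)}u$ to pass through the weighted average defining $\nu_{p,R}$. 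Everything else is a routine topological (clopen) argument.
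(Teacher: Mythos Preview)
Your proof is correct and takes essentially the same route as the paper: a clopen argument using \eqref{eq4.19}--\eqref{eq4.20} and the fact that $\nu_{p,R}$ is a probability measure. The only difference is organizational---the paper first settles the harmonic case via \eqref{eq4.20} and then reduces the subharmonic case to it by passing to the harmonic replacement, which sidesteps the ``main obstacle'' you flag; your direct derivation of the sub-mean-value inequality via $\mu(r)=u_r(p)\gs u(p)$ is equally valid, and the appeal to Lemma~\ref{lem4.7} at the end is unnecessary.
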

\begin{proof}Firstly, we consider $u$ is harmonic. By (\ref{eq4.19})--(\ref{eq4.20}) and  that $\nu_{p,R}$ is a probability measure,
 we have $u(x)=u(p)$ in some neighborhood $B_p(R)$. Hence the set $\{x\in\Omega:\ u(x)=u(p)\}$ is open. On the other hand,
  the continuity of $u$ implies that the set is close. Therefore, it is $\Omega$ and $u$ is a constant in $\Omega$.

If $u$ is a subharmonic function, the result follows from the definition of subharmonic and the above harmonic case.\end{proof}

The following lemma appeared in \cite{petu96} (Page 4). In this lemma, Petrunin constructed an auxiliary function, which is
 similar to Perelman's concave function.
\be{lem}
{\label{lem3.4}{\rm (Petrunin \cite{petu96})}\indent For any point $p\in M$,  there exists a neighborhood $B_p(r_2)$ and
 a function $h_0: B_p(r_2)\to\R$ satisfying:\\
\indent (i)\indent $h_0(p)=0$;\\
\indent (ii)\indent $\mathscr L_{h_0}\gs 1\cdot \rv$ on $B_p(r_2)$;\\
\indent (iii)\indent there are $0<c<C<\infty$ such that $$c\cdot |px|^2\ls h_0(x)\ls C\cdot |px|^2.$$
}
\begin{proof} A sketched proof was given in \cite{petu96}. For the completeness, we present a  detailed proof  as follows.

Without loss of generality, we may assume $M$ has curvature $\gs-1$ on a neighborhood of $p$.  Fix a small real number
 $r>0$ and  set
 \begin{equation*}\phi(t)=\begin{cases}a+bt^{2-n}+t^2&\quad t\ls r\\
 0&\quad t>r,\end{cases}
 \end{equation*}
where $a=-\frac{n}{n-2}r^2$ and $b=\frac{2}{n-2}r^n.$

Take a minimal set of points $\{q_\alpha\}_{\alpha=1}^{N}$ such that $|pq_\alpha|=r$ and $\min_{1\ls \alpha\ls N}\angle(\xi, \uparrow_p^{q_\alpha})\ls\pi/10$ for each direction $\xi\in \Sigma_p$. Consider
$$h_0(x)=\sum_{\alpha=1}^Nh_\alpha$$
 where $h_\alpha=\phi(|q_\alpha x|).$  Clearly, $h_0(p)=0$. Bishop--Gromov volume comparison of $\Sigma_p$ implies that $N\ls c(n)$,
  for some constant  depending only on  the dimension $n$.

 Fix any small $0<\delta\ll r$. For each $x\in B_p(\delta)\backslash\{p\}$, there is some $q_\alpha$ such that $\angle(\uparrow^x_p, \uparrow_p^{q_\alpha})\ls\pi/10$. When $\delta$ is small, the comparison angle $\wa x q_\alpha p$ is small. Then $\wa q_\alpha xp\gs \frac{3}{4}\pi$. This implies that $ |\nabla_x dist_{q_\alpha}|\gs 1/\sqrt2,$ when $\delta$ is sufficiently  small.

Fix any $\alpha$. Since the function $-h_\alpha$ is semi-concave near $p$, the singular part of $\mathscr L_{h_\alpha}$ is nonnegative.
   We only need to consider the absolutely continuous part $\Delta h_\alpha$. By Laplacian comparison theorem (see \cite{zz10} or \cite{ks07})
    and a  direct computation, we have $\Delta h_\alpha(x)\gs-C\delta$ a.e. in $B_p(\delta)$ and $\Delta h_\alpha(x)\gs n-C\delta$
     at almost all points $x$ with $|\nabla_x dist_{q_\alpha}|\gs 1/\sqrt2,$ where $C$ denoted the various positive constants depending
      only on $n$ and $r$.  Indeed, since $r-\delta \ls|q_\alpha x|\ls r+\delta$,
\begin{equation*}\begin{split}
\Delta h_\alpha(x)&=\phi'(|q_\alpha x|)\cdot\Delta dist_{q_\alpha}(x)+\phi''(|q_\alpha x|)|\nabla dist_{q_\alpha}|^2\\
&=2|q_\alpha x|\cdot\Big(1-\frac{r^n}{|q_\alpha x|^n} \Big)\cdot \Delta dist_{q_\alpha}(x)
+2\Big(1+(n-1)\frac{r^n}{|q_\alpha x|^n} \Big)\cdot |\nabla_x dist_{q_\alpha}|^2\\
&\gs 2|q_\alpha x|\cdot\Big(1-\frac{r^n}{|q_\alpha x|^n} \Big)\cdot \Big( \frac{n-1}{|q_\alpha x|}+C|q_\alpha x| \Big)\\
&\qquad+2\Big(1+(n-1)\frac{r^n}{|q_\alpha x|^n} \Big)\cdot |\nabla_x dist_{q_\alpha}|^2\\
&\gs -C_n \frac{\delta}{r}+2n\cdot|\nabla_x dist_{q_\alpha}|^2.
\end{split}\end{equation*}

 On the other hand,  at the points $x$ where $\angle(\uparrow_p^x, \uparrow_p^{q_\alpha})\ls \pi/10 $ and $|px|\ls|pq_\alpha|/10,$
   we have
 $$r-|px|\ls |q_\alpha x|\ls r-|px|/2.$$
 Hence,  by applying $\phi'(r)=0$ and $2n\ls \phi''(t)\ls 2n\cdot 2^n$ for all $r/2\ls t\ls r$,
 it is easy to check that  there exists two positive number $c_1,C_1$ depending only on  $n$ and $r$ such that
 $$c_1\cdot |px|^2\ls h_\alpha(x)=\phi(|q_\alpha x|)\ls C_1\cdot |px|^2$$
  if $r-|px|\ls |q_\alpha x|\ls r-|px|/2$.

Therefore, we have (since for each $x\in B_p(\delta)\backslash\{p\}$, there is some $q_\alpha$ such that $\Delta h_\alpha(x)\gs n-C\delta$.)
$$\Delta h_0\gs n-N\cdot C\delta \quad {\rm on}\quad B_p(\delta) $$
and
 $$c_1\cdot |px|^2\ls h_\alpha(x)=\phi(|q_\alpha x|)\ls N\cdot C_1\cdot |px|^2.$$
 By $N\ls c(n)$ for some constant $c(n)$ depending only on the dimension $n$, if $\delta< (C\cdot c_n)^{-1},$
  the function $h_0$ satisfies all of  conditions in the lemma.
\end{proof}
\begin{rem}\label{rem4.12}
If $p$ is a regular point, then the constant $r_2$ given in Lemma \ref{lem3.4} can be chosen uniformly in a neighborhood of $p$.
 Indeed, in this case, there exists a neighborhood of $p$ which is bi-Lipschitz homeomorphic to an open domain of $\mathbb R$
  with an bi-Lipschitz constant close to 1. The constant $r$ and $\delta$ in the above proof can be chosen to
   have a lower bound depending only on the bi-Lipschitz constant.
\end{rem}
\be{prop}{\label{prop4.13}{\rm(Petrunin \cite{petu96})} \indent Given any $p\in \Omega $ and $\lambda\gs0$,
 there exists constants $R_p$ and $c(p,\lambda)$ such that, for any
 $u\in W^{1,2}(\Omega)\cap C(\Omega)$ satisfing $\mathscr L_u\ls \lambda\cdot\rv$  on $\Omega$, we have
 \begin{equation}
\label{eq4.21}\int_{B_p(R)}ud\nu_{p,R}\ls u(p)+c(p,\lambda)\cdot R^2
\end{equation}
for any  ball $B_p(R)\Subset\Omega$ with $0<R<R_p$, where the  constant $c(p,\lambda)=0$ if $\lambda=0.$}
\begin{proof}This proposition was given by Petrunin  in \cite{petu96} (Page. 5). For completeness, we give a detailed proof as follows.

\emph{Case 1:} $\lambda=0$.

For each $r\in(0,R)$, let $u_r$ be the harmonic function on $B_p(r)$ with boundary value $u_r=u$ on $\partial B_p(r)$.
 Then $\mathscr L_{u-u_r}\ls0$ and  $(u-u_r)|_{\partial B_p(r)}=0.$ By applying maximum principle, we know that $u-u_r\gs0$ on $B_p(r)$.
That is,  by the definition of  $\mu(r)$, $ \mu(r)\ls u(p).$
Therefore, by the definition of  $\nu_{p,R}$, we have
$$\int_{B_p(R)}ud\nu_{p,R}\ls u(p).$$

\emph{Case 2:} $\lambda>0$.

 Let $h_0$ be the function given in Lemma \ref{lem3.4}, we have $\mathscr L_{u-\lambda h_0}\ls 0$ on $B_p(r_2)$,
  where $r_2$ is the constant given in Lemma \ref{lem3.4}.
Hence, we can use the case above for function $u-\lambda h_0.$ This gives us, by Lemma \ref{lem3.4},
$$u(p)=u(p)-\lambda h_0(p)\gs\int_{B_p(R)}(u-\lambda h_0)d\nu_{p,R}\gs \int_{B_p(R)}ud\nu_{p,R}-C\cdot\lambda\cdot R^2
$$
for all $0<R<r_2$, where $C$ is the constant given in Lemma \ref{lem3.4}.
 \end{proof}
\begin{rem}\label{rem4.14} If $p$ is regular, according to Remark \ref{rem4.9} and Remark \ref{rem4.12},
the constant $R_p$ can be chosen uniformly in a neighborhood of $p$.
\end{rem}

The following lemma is similar as one appeared in \cite{petu96} (Page. 10).
\begin{lem}\label{lem4.15}{\rm (Petrunin \cite{petu96})} \indent Let $h$ be the Perelman concave function given
 in Lemma \ref{lem3.3} on a neighborhood  $U\subset M$. Assume that $f$ is a semi-concave function defined on $U $.
  And suppose that   $u\in W^{1,2}(U )\cap C(U )$ satisfies $\mathscr L_u\ls \lambda\cdot\rv$  on $U$ for some constant $\lambda\in\R$.

We assume that point $x^*\in U$ is a minimal point of function $u+f+h$, then $x^*$ has to be regular.
 Moreover, $f$ is differentiable at $x^*$ $($in sense of Taylor expansion \eqref{eq2.16}$)$.
\end{lem}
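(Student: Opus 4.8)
The plan is to test the minimality of $g:=u+f+h$ at $x^*$ against the harmonic-type probability measure $\nu_{x^*,R}$ from Section~4, and then to convert the resulting inequality into a statement about the angular averages $\fint_{\Sigma_{x^*}}d_{x^*}f$ and $\fint_{\Sigma_{x^*}}d_{x^*}h$, using Proposition~\ref{prop4.13} to handle $u$ and Lemma~\ref{lem2.3} to handle the two semi-concave pieces. First I would fix $R>0$ small enough that $\overline{B_{x^*}(R)}\subset U$, that $\nu_{x^*,R}$ is well defined (Lemma~\ref{lem4.8} applies to any point, with no regularity assumption on $x^*$), and that Proposition~\ref{prop4.13} is available. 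Since $g\gs g(x^*)$ on $\overline{B_{x^*}(R)}$ and $\nu_{x^*,R}$ is a probability measure,
\[
0\ls\int_{B_{x^*}(R)}\big(g(x)-g(x^*)\big)\,d\nu_{x^*,R}(x)=\mathrm{(I)}+\mathrm{(II)}+\mathrm{(III)},
\]
where $\mathrm{(I)}$, $\mathrm{(II)}$, $\mathrm{(III)}$ are the $\nu_{x^*,R}$-integrals of $u-u(x^*)$, $f-f(x^*)$ and $h-h(x^*)$ respectively.

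For $\mathrm{(I)}$ I would use $\mathscr L_u\ls\lambda\cdot\rv\ls\lambda_+\cdot\rv$ with $\lambda_+:=\max\{\lambda,0\}\gs0$, so that Proposition~\ref{prop4.13} gives $\mathrm{(I)}\ls c(x^*,\lambda_+)R^2=O(R^2)$. For $\mathrm{(II)}$ and $\mathrm{(III)}$ I would use that $f$ and $h$ are semi-concave, hence locally Lipschitz, together with the lower bound \eqref{eq4.19}: writing $\nu_{x^*,R}=c_R\cdot\rv|_{B_{x^*}(R)}+\tau_R$ with $\tau_R\gs0$ and $c_R:=1/H^n\big(B_o(R)\subset T^k_{x^*}\big)$, the total mass $\beta_R:=1-c_R\,\rv(B_{x^*}(R))$ of $\tau_R$ tends to $0$ as $R\to0$, because $\rv(B_{x^*}(R))/H^n\big(B_o(R)\subset T^k_{x^*}\big)\to1$ by the volume asymptotics \eqref{eq2.9}. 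Hence, for any semi-concave $\phi$ with $\phi(x^*)=0$, the $\tau_R$-contribution to $\int\phi\,d\nu_{x^*,R}$ is bounded by $\mathrm{Lip}(\phi)\cdot R\cdot\beta_R=o(R)$, while Lemma~\ref{lem2.3} gives $c_R\int_{B_{x^*}(R)}\phi\,d\rv=(1+o(1))\big(\tfrac{nR}{n+1}\fint_{\Sigma_{x^*}}d_{x^*}\phi(\xi)\,d\xi+o(R)\big)$. Taking $\phi=f-f(x^*)$ and $\phi=h-h(x^*)$ yields $\mathrm{(II)}=\tfrac{nR}{n+1}\fint_{\Sigma_{x^*}}d_{x^*}f(\xi)\,d\xi+o(R)$ and $\mathrm{(III)}=\tfrac{nR}{n+1}\fint_{\Sigma_{x^*}}d_{x^*}h(\xi)\,d\xi+o(R)$.

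Combining these three estimates, dividing by $R$ and letting $R\to0$, I would obtain $\fint_{\Sigma_{x^*}}d_{x^*}f+\fint_{\Sigma_{x^*}}d_{x^*}h\gs0$. On the other hand Proposition~\ref{prop3.1} gives $\fint_{\Sigma_{x^*}}d_{x^*}f\ls0$ and Lemma~\ref{lem3.3}(iii) gives $\fint_{\Sigma_{x^*}}d_{x^*}h\ls0$, so both averages vanish. By the ``moreover'' clause of Lemma~\ref{lem3.3}, $\fint_{\Sigma_{x^*}}d_{x^*}h=0$ forces $x^*$ to be a regular point, so $T_{x^*}=\R^n$ and $\Sigma_{x^*}=\mathbb S^{n-1}$. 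Then $d_{x^*}f$ is a concave, positively $1$-homogeneous function on $\R^n$ with $\int_{\mathbb S^{n-1}}d_{x^*}f\,d\xi=0$; concavity together with homogeneity gives $d_{x^*}f(\xi)+d_{x^*}f(-\xi)\ls0$ for all $\xi$, and since this nonpositive continuous function has vanishing integral over $\mathbb S^{n-1}$ it is identically zero, so $d_{x^*}f$ is odd, hence (being simultaneously concave and convex) linear. Therefore $f$ is differentiable at $x^*$ in the sense of \eqref{eq2.16}, which finishes the argument.

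The hard part will be the analysis underlying $\mathrm{(II)}$ and $\mathrm{(III)}$: one must control the ``excess'' of the harmonic measure $\nu_{x^*,R}$ over $c_R\cdot\rv$, i.e.\ show $\beta_R\to0$, so that the $o(R)$ Lipschitz error produced by $\tau_R$ does not swamp the linear-in-$R$ term carrying the angular average $\fint_{\Sigma_{x^*}}d_{x^*}\phi$. The remaining ingredients are a fairly routine combination of Lemma~\ref{lem2.3}, Proposition~\ref{prop3.1}, Lemma~\ref{lem3.3} and Proposition~\ref{prop4.13}; one should also double-check that all of the above is legitimate for every sufficiently small $R$, which is ensured by Lemma~\ref{lem4.8} and Proposition~\ref{prop4.13} without any prior knowledge about the point $x^*$.
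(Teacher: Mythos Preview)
Your proposal is correct and follows essentially the same approach as the paper: both test minimality against the harmonic measure $\nu_{x^*,R}$, use Proposition~\ref{prop4.13} for the $u$-piece, control the semi-concave pieces via the decomposition $\nu_{x^*,R}-c_R\cdot\rv\gs0$ together with Lemma~\ref{lem2.3}, and then invoke Proposition~\ref{prop3.1} and Lemma~\ref{lem3.3}(iii). The only cosmetic differences are that the paper bundles $f+h$ into a single semi-concave function $\bar h$ before estimating, and for the differentiability it argues via the supporting gradient $\nabla_{x^*}f$ (showing $d_{x^*}f(\xi)=\ip{\nabla_{x^*}f}{\xi}$ from the vanishing average and the inequality $d_{x^*}f(\xi)\ls\ip{\nabla_{x^*}f}{\xi}$) rather than your concavity/oddness argument; both routes are equivalent.
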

\begin{proof}Without loss of generality, we may assume that $\lambda\gs0$. In the proof, we denote $B_{x^*}(R)\ (\subset U)$
 by $B_R$. From the minimum property of $x^*$, we have \begin{equation}\label{eq4.22}
\int_{B_R}(u+f+h)d\nu_{p,R}\gs u(x^*)+f(x^*)+h(x^*).\end{equation}
By Proposition \ref{prop4.13}, we get \begin{equation}\label{eq4.23}
\int_{B_R}ud\nu_{p,R}\ls u(x^*)+cR^2
\end{equation}for some constant $c=c(p,\lambda)$ and for all sufficiently small $R$.

On the other hand,   setting $\bar h=f+h$, we have
\begin{align}\label{eq4.24}
\int_{B_R}\bar hd\nu_{p,R}&= \bar h(x^*)+\int_{B_R}(\bar h-\bar h(x^*))d\Big(\nu_{p,R}-\frac{\rv}{H^n(B^k_o(R))}\Big)\\
\nonumber &\quad+\frac{1}{{H^n(B^k_o(R))}}\int_{B_R}(\bar h-\bar h(x^*))d{\rv}\\ \nonumber &:=\bar h(x^*)+J_1+J_2,\end{align}where $B^k_o(R)$ is the ball in $T^k_p$.

Because $\bar h=f+h$ is a Lipschitz function and $\frac{\rv(B_R)}{H^n(B^k_o(R))}=1+o(1),$ we have
\begin{equation}\label{eq4.25}|J_1|\ls O(R)\cdot \int_{B_R}1d\Big(\nu_{p,R}-\frac{\rv}{H^n(B^k_o(R))}\Big)=O(R)\cdot\Big(1-\frac{\rv(B_R)}{\rv(B^k_o(R))}\Big)=o(R).
\end{equation}
Since $\bar h=f+h$ is semi-concave, according to equation (\ref{eq2.7}), we have
\begin{equation}\label{eq4.26}
\begin{split}J_2&=\frac{\rv(B_p(R))}{{H^n(B^k_o(R))}}\fint_{B_R}(\bar h-\bar h(x^*))d{\rv}\\&
=\frac{\rv(B_R)}{H^n(B^k_o(R))}\cdot\Big(\frac{nR}{n+1}\fint_{\Sigma_{x^*}} d_{x^*}\bar h(\xi)d\xi+o(R)\Big)\\
&=\frac{nR}{n+1}\fint_{\Sigma_{x^*}} d_{x^*}\bar h(\xi)d\xi+o(R)\end{split}
\end{equation}

By combining (\ref{eq4.22})--(\ref{eq4.26}), we have
$$\frac{nR}{n+1}\fint_{\Sigma_{x^*}} d_{x^*}\bar h(\xi)d\xi+o(R)+cR^2\gs0.$$
By combining with  Proposition \ref{prop3.1},
$$\int_{\Sigma_{x^*}}d_{x^*}\bar h(\xi)d\xi=\int_{\Sigma_{x^*}}d_{x^*}f(\xi)d\xi+\int_{\Sigma_{x^*}}d_{x^*}  h(\xi)d\xi \ls0,$$
 we have
$$\int_{\Sigma_{x^*}}d_{x^*}f(\xi)d\xi=\int_{\Sigma_{x^*}}d_{x^*}h(\xi)d\xi=0.$$
Then by using  Lemma \ref{lem3.3} (iii), we conclude that $x^*$ is regular.

Next we want to show that $f$ is differentiable at $x^*$.

Since $x^*$ is regular, we have
$$\int_{\Sigma_{x^*}}\ip{\nabla_{x^*} f}{\xi}d\xi=\int_{\mathbb S^{n-1}}\ip{\nabla_{x^*} f}{\xi}d\xi=0.$$
Hence
$$\int_{\Sigma_{x^*}}\big(d_{x^*}f(\xi)-\ip{\nabla_{x^*} f}{\xi}\big)d\xi=\int_{\Sigma_{x^*}} d_{x^*}f(\xi)d\xi =0.$$
On the other hand, by the definition of $\nabla_{x^*}f$ (see Section 1.3 of \cite{petu07}), we have
$$d_{x^*}f(\xi)\ls \ip{\nabla_{x^*} f}{\xi}\qquad \forall\ \xi\in \Sigma_{x^*}.$$
The combination of above two equation, we have
$$d_{x^*}f(\xi)= \ip{\nabla_{x^*} f}{\xi}\qquad \forall\ \xi\in \Sigma_{x^*}.$$
Combining with the fact $x^*$ is regular, we get that $f$ is differentiable at $x^{*}$.
\end{proof}

 We now follow Petrunin in \cite{petu96} to introduce a perturbation argument. Let $u\in W^{1,2}(D)\cap C(\overline{D})$
  satisfy $\mathscr L_u\ls \lambda\cdot\rv$  on a bounded domain $D$. Suppose that $x_0$ is the unique minimum point of $u$ on $D$ and $u(x_0)<\min_{x\in \partial D}u.$ Suppose also that $x_0$ is regular and $g=(g_1,\ g_2,\ \cdots\ g_n): D\to \R^n$
   is a coordinate system around $x_0$ such that $g
$ satisfies the following:\\
 \indent (i) $g$ is an almost isometry from $D$ to $g(D)\subset\R^n$ (see \cite{bgp92}).
  Namely, there exists a sufficiently small number $\delta_0>0$ such that
   $$\Big|\frac{\|g(x)-g(y)\|}{|xy|}-1\Big|\le \delta_0,\qquad {\rm for\ all}\quad x,y \in D, \ x\not=y;$$
  \indent (ii) all of the coordinate functions $g_j,\ 1\ls j\ls n,$ are concave (\cite{p94}).\\
  Then there exists $\epsilon_0>0$ such that, for each vector $V=(v^1,v^2,\cdots,v^n)\in \R^{n}$ with  $|v^j|\ls\epsilon_0$
   for all $1\ls j\ls n$, the function $$G(V,x):=u(x)+V\cdot g(x)$$ has a minimum point in the interior of $D$, where $\cdot$
    is the Euclidean inner product of $\R^n$ and    $V\cdot g(x)=\sum_{j=1}^nv^{j}g_j(x)$.

  Let  $$\mathscr U=\{V\in \R^{n}:\ |v^j|<\epsilon_0,\ 1\ls j\ls n\}
 \subset\R^n.$$
  We define $ \rho: \mathscr U\to D$ by setting $\rho(V)$ to be one of minimum  point of $G(V,x)$.
 Note that the map $\rho$ might be not uniquely defined.

The following  was given by Petrunin  in \cite{petu96} (Page 8). For the completeness, a detailed proof is given here.
\begin{lem}\label{lem4.16}{\rm (Petrunin \cite{petu96})}\indent
Let $u,\ x_0,$ $\{g_j\}_{j=1}^n$ and $\rho$ be as above. There exists some $\epsilon\in(0,\epsilon_0)$ and $\delta>0$ such that   \begin{equation}\label{eq4.27}
|\rho(V)\rho(W)|\gs \delta \cdot\|V-W\|\qquad \forall\ V, W\in\mathscr U_\epsilon^+.
\end{equation}
 where
 $$ \mathscr U_\epsilon^+:=\{ V=(v_1,v_2,\cdots,v_n)\in \R^n: \  0<v^j<\epsilon \ \ {\rm for\ all}\ \ 1\ls j\ls n\}.$$
In particular, for arbitrary  $\epsilon'\in(0,\epsilon)$, the image  $\rho (\mathscr U^+_{\epsilon'})$ has nonzero Hausdorff measure.
\end{lem}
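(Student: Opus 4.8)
The plan is to prove \eqref{eq4.27} by contradiction, using a two–sided mean value estimate at a carefully chosen scale, and then to read off the measure statement. Fix $V,W\in\mathscr U^+_\epsilon$ and write $p=\rho(V)$, $q=\rho(W)$, $m=\|V-W\|$, where $\epsilon\in(0,\epsilon_0)$ is to be fixed small at the end. The key preliminary observation is that, because every component of $V$ is \emph{positive} and every $g_j$ is concave, the function $V\cdot g=\sum_j v^j g_j$ is concave, hence $\mathscr L_{V\cdot g}\ls0$, and therefore $G(V,\cdot)=u+V\cdot g$ still satisfies $\mathscr L_{G(V,\cdot)}\ls\lambda\cdot\rv$ on $D$; this is the only place the sign restriction defining $\mathscr U^+_\epsilon$ — rather than all of $\mathscr U$ — is used. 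Consequently $\phi_V:=G(V,\cdot)-\min_{\overline D}G(V,\cdot)$ is a nonnegative continuous function in $W^{1,2}(D)$ with $\phi_V(p)=0$ and $\mathscr L_{\phi_V}\ls\lambda\cdot\rv$, so Proposition~\ref{prop4.13} applies to it; likewise $\phi_W\gs0$ with $\phi_W(q)=0$. Since $u$ has a \emph{unique} minimum at the regular point $x_0$ and $G(V,\cdot)\to u$ uniformly on $\overline D$ as $\epsilon\to0$, after shrinking $\epsilon$ we may assume $p,q$ lie in a fixed ball $B_{x_0}(r_*)$ on which — by Remarks~\ref{rem4.9}, \ref{rem4.12}, \ref{rem4.14}, which use that $x_0$ is regular — the chart $g$ is defined, the constants $R_p$, $c(p,\lambda)$ of Proposition~\ref{prop4.13} are uniform, and $H^n(B_o(R)\subset T^k_p)\ls C_nR^n$ with $\rv(B_p(R))$ comparable to $R^n$ for $R\ls r_*$.

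Two consequences of minimality drive the proof. First, the difference $\psi:=\phi_V-\phi_W=(V-W)\cdot g+\mathrm{const}$ equals $\bar\psi\circ g$ in the chart, where $\bar\psi$ is an affine function on $\R^n$ whose linear part has norm exactly $m$. Second, comparing $G(V,p)\ls G(V,q)$ with $G(W,q)\ls G(W,p)$ yields $0\ls\phi_W(p)=(V-W)\cdot(g(q)-g(p))-\phi_V(q)\ls(V-W)\cdot(g(q)-g(p))\ls(1+\delta_0)\,m|pq|$, and symmetrically $\phi_V(q)\ls(1+\delta_0)\,m|pq|$; in particular $|\psi(p)|=\phi_W(p)\ls(1+\delta_0)\,m|pq|$.

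Now fix a large dimensional constant $A$ and set $R=A|pq|$. If $R>r_*$ then $|pq|>r_*/A$ while $\|V-W\|\ls\sqrt n\,\epsilon$, so \eqref{eq4.27} holds with $\delta=r_*/(A\sqrt n\,\epsilon)$; assume therefore $R\ls r_*$. For the upper bound, apply Proposition~\ref{prop4.13} to $\phi_V$ at $p$ (where $\phi_V(p)=0$) and to $\phi_W$ at $p$ (where $\phi_W(p)\ls(1+\delta_0)m|pq|$), together with \eqref{eq4.19} and $H^n(B_o(R)\subset T^k_p)\ls C_nR^n$, to get
$$\int_{B_p(R)}|\psi|\,d\rv\ls\int_{B_p(R)}\phi_V\,d\rv+\int_{B_p(R)}\phi_W\,d\rv\ls C\,A^n|pq|^{n+1}\big(A^2|pq|+m\big).$$
For the lower bound I use only that $g$ is a $(1+\delta_0)$–bi-Lipschitz homeomorphism onto an open set: $\bar\psi$ is nonnegative on a spherical cap $E^+$ of the ball $B_{g(p)}\big((1+\delta_0)^{-1}R\big)\subset g(B_p(R))$ — the displacement of this half-space produced by $\psi(p)=-\phi_W(p)$ is negligible once $A\gs4(1+\delta_0)^2$, by the last estimate of the previous paragraph — and $\int_{E^+}\bar\psi\,dH^n\gs c\,m\,R^{n+1}$; pulling back by $g$, which changes $n$-dimensional Hausdorff measure only by a factor bounded above and below in terms of $\delta_0$, one finds
$$\int_{B_p(R)}|\psi|\,d\rv\gs\int_{B_p(R)}\psi_+\,d\rv\gs c_n\,A^{n+1}|pq|^{n+1}\,m.$$
Comparing the two displays and dividing by $A^n|pq|^{n+1}$ gives $(c_nA-C)\,m\ls C\,A^2|pq|$; choosing $A$ so that $c_nA\gs2C$ yields $|pq|\gs\delta m$ with $\delta=c_n/(2CA)>0$, depending only on $n,k,\lambda,\delta_0$. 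This is \eqref{eq4.27}. Finally \eqref{eq4.27} makes $\rho$ injective on $\mathscr U^+_{\epsilon'}$ with $\delta^{-1}$-Lipschitz inverse for every $\epsilon'\in(0,\epsilon)$, so $H^n\big(\rho(\mathscr U^+_{\epsilon'})\big)\gs\delta^nH^n(\mathscr U^+_{\epsilon'})=\delta^n(\epsilon')^n>0$.

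The main obstacle is the choice of scale $R$. If $R$ were taken comparable to $|pq|$, the ``linear growth'' of $\psi$ and the two error terms — the $O(R^2)$ defect of $\phi_V$ at its minimum and the $O(m|pq|)$ value of $\phi_W$ at $p$ — would all be of order $m|pq|$ and no contradiction would follow; one must take $R=A|pq|$ with $A$ a large but explicit dimensional constant so that the linear term strictly dominates both errors. The other delicate point is purely bookkeeping: all constants must be uniform over $V,W\in\mathscr U^+_\epsilon$, which is arranged by shrinking $\epsilon$ so that $\rho(\mathscr U^+_\epsilon)$ stays in the neighbourhood of the regular point $x_0$ where Remarks~\ref{rem4.9}, \ref{rem4.12}, \ref{rem4.14} give uniform constants, and by noting that $V\cdot g$ being concave (not merely semiconcave) is exactly what lets Proposition~\ref{prop4.13} be applied to $\phi_V$ and $\phi_W$.
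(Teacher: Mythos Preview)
Your argument is correct and shares the paper's core strategy --- apply Proposition~\ref{prop4.13} to $G(V,\cdot)$ at its minimum and compare with a linear lower bound coming from the affine structure of $(V-W)\cdot g$ in the chart --- but the execution differs. The paper integrates directly against the harmonic measure $\nu_{\rho,R}$, uses the pointwise bound \eqref{eq4.28} together with a single application of Proposition~\ref{prop4.13} and the cone set $K$ to obtain $\frac{c_2R}{8}\|V-W\|\ls c_1|\rho\widehat\rho|\,\|V-W\|+c_0R^2$, and then chooses $R$ proportional to $\|V-W\|$. You instead pass to Hausdorff measure through \eqref{eq4.19}, split $|\psi|\ls\phi_V+\phi_W$, apply Proposition~\ref{prop4.13} \emph{twice} at $p$ (once for each $\phi$), and choose $R=A|pq|$. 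Your route is more symmetric in $V,W$ and perhaps more transparent, but it carries the side condition $R\gs 4(1+\delta_0)^2|pq|$ so that the cap displacement $\phi_W(p)/m$ is negligible --- which is exactly why $A$ must be large --- whereas the paper's cone $K$, being centered at $g(\rho)$, has no such restriction and allows any small $R$.

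One minor omission: setting $R=A|pq|$ tacitly assumes $|pq|>0$. The degenerate case $p=q$ is handled by the same estimates with an arbitrary $R\in(0,r_*]$: then $\phi_V(p)=\phi_W(p)=0$, so $\int_{B_p(R)}|\psi|\,d\rv\ls CR^{n+2}$ while your half-ball bound gives $\int_{B_p(R)}\psi_+\,d\rv\gs c_n mR^{n+1}$, forcing $m\ls CR$ for all such $R$ and hence $m=0$.
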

\begin{proof}Without loss of generality, we can assume that $\lambda\gs0$.

Since  $x_0$ is a regular point, according to Remark \ref{rem4.14}, the mean value inequality in Proposition \ref{prop4.13}
 holds uniformly on some neighborhood of $x_0$. Namely, there exists  neighborhood $U_{x_0}\ni x_0$ and two constants $R_0,c_0$
  such that for any $w\in W^{1,2}(D)\cap C(D)$ satisfying $\mathscr L_w\ls \lambda\cdot\rv$, we have
\begin{equation}\label{eq4+1}
\int_{B_q(R)}wd\nu_{q,R}\ls w(q)+c_0\cdot R^2
\end{equation}
for all $q\in U_{x_0} $ and all $R\in(0,R_0)$.

Noting that $G(V,x)=u(x)+V\cdot g$ converges to $u$ as $V\to0$, and that $x_0$ is the \emph{uniquely} minimal value point of
 $u(x)$, we can conclude that $\rho(V)$ converges to $x_0$ as $V\to0.$ Hence, there exists a positive number $\epsilon>0$
  such that $\rho(V)\in U_{x_0}$ provided $V=(v^1,\cdots, v^n) $ satisfies $|v^j|\ls \epsilon $ for all $1\ls j\ls n.$
From now on, we fix such $\epsilon$ and let
$$\mathscr U_\epsilon^+:=\{ V=(v_1,v_2,\cdots,v_n)\in \R^n: \  0<v^j<\epsilon \ \ {\rm for\ all}\ \ 1\ls j\ls n\}.$$

Let $V,W\in\mathscr U_\epsilon^+.$ Denote by $\rho:=\rho(V)$ and $\widehat{\rho}:=\rho(W)$. That means
$$G(V,\rho)\ls G(V,x)
\quad{\rm and}\quad G(W,\widehat{\rho})\ls G(W,x)$$
for any $x\in D$.
Hence, we have
\begin{align}\label{eq4.28}
(W-V)\cdot g(\widehat{\rho})-(W-V)\cdot g(x)&=G(W,\widehat{\rho})-G(V,\widehat{\rho})-G(W,x)+G(V,x)\\
 \nonumber&\ls G(V,x)-G(V,\widehat{\rho})\\
\nonumber&\ls G(V,x)-G(V,\rho).
\end{align}
Notice that $v^j>0$ and $g_j$ are concave for $1\ls j\ls n$. We know that $G(V,x)=u(x)+V\cdot g(x)$ also satisfies
$\mathscr L_{G(V,x)}\ls \lambda\cdot\rv $. By the mean value inequality \eqref{eq4+1}, we have
\begin{equation}\label{eq4.29}
\int_{B_\rho(R)}\big(G(V,x)-G(V,\rho)\big)d\nu_{\rho,R}\ls  c_0\cdot R^2
\end{equation}
for any $0<R<R_0.$
We denote $\phi_+:=\max\{\phi,0\}$ for a function $\phi$. It is clear that $(\phi+a)_+\ls \phi_++|a|$ for any $a\in \R.$
 By combining this and the assumption that $g$ is an almost isometry, we have
\begin{equation}\begin{split}\label{eq4.30}
\int_{B_\rho(R)}&\big((W-V)\cdot g(\rho)-(W-V)\cdot g(x)\big)_+d\nu_{\rho,R}\\
&  \ls
\int_{B_\rho(R)}\big((W-V)\cdot g(\widehat{\rho})-(W-V)\cdot g(x)\big)_+d\nu_{\rho,R}\\
&\qquad+|(W-V)\cdot g(\rho)-(W-V)\cdot g(\widehat{\rho})|\\
 &  \ls
\int_{B_\rho(R)}\big((W-V)\cdot g(\widehat{\rho})-(W-V)\cdot g(x)\big)_+d\nu_{\rho,R}\\&\qquad +  \|g(\rho)-g(\widehat{\rho})\|\cdot\|W-V\|\\
 &  \ls
\int_{B_\rho(R)}\big((W-V)\cdot g(\widehat{\rho})-(W-V)\cdot g(x)\big)_+d\nu_{\rho,R}+c_1\cdot|\rho\widehat{\rho}|\cdot\|W-V\|,
\end{split}\end{equation}
where constant $c_1$ depends only on $\delta_0$.

Consider the set $$K:=\big\{X\in\R^n\ \big|\ \frac{R}{4}\ls \|X-g(\rho)\|\ls \frac{R}{2},\ \ (X-g(\rho))\cdot(W-V)\ls-\frac{1}{2}\|X-g(\rho)\|\cdot\|V-W\|\big\}.$$
In fact, $K$ is a trunked cone in $\R^n$ with vertex $g(\rho)$, central direction $-W+V+g(\rho)$,
 cone angle $\frac{\pi}{3}$ and radius from $\frac{R}{4}$ to $\frac{R}{2}$.

Since $K\subset B_{g(\rho)}(R/2) $ and $g$ is an almost isometry with $\delta$ sufficiently small,
 it is obvious that $g^{-1}(K)\subset B_\rho(R)$. Hence, we have
\begin{equation}\label{eq4.31}\begin{split}
\int_{B_\rho(R)}\big(&(W-V)\cdot g(\rho)-(W-V)\cdot g(x)\big)_+d\nu_{\rho,R}\\
&\gs\int_{g^{-1}(K)}\big((W-V)\cdot (g(\rho) - g(x)\big)_+d\nu_{\rho,R}\\&\gs   \frac{1}{2}\|W-V\| \cdot \int_{g^{-1}(K)}  \|g(\rho)-g(x)\|d\nu_{\rho,R}\\
& \gs  \frac{R}{8}\|W-V\|\cdot \nu_{\rho,R}\big(g^{-1}(K)\big).
\end{split}
\end{equation}
By the estimate \eqref{eq4.18} and that $g$ is $\delta_0$-almost isometry, we have
\begin{equation}\label{eq4.32}\nu_{\rho,R}\big(g^{-1}(K)\big)\gs \frac{\rv(g^{-1}(K))}{H^n(B_o(R)\subset T^k_\rho)}\gs c_2
\end{equation}
for some constant $c_2$ depending only on $\delta_0$ and the dimension $n$, the lower bound $k$ of curvature.

By combining (\ref{eq4.28})--(\ref{eq4.32}), we obtain
$$ \frac{c_2 \cdot R}{8}\cdot \|W-V\|\ls  c_1|\rho\widehat{\rho}|\cdot\|W-V\|+c_0R^2$$ for any $0<R<R_0.$
We set \begin{equation}\label{eq4.33}N =\frac{n\epsilon\cdot c_2}{c_0R_0}+1.\end{equation}
Since $\|V-W\|\ls n\epsilon$, we get
$$ R':=\frac{c_2\cdot\|V-W\|}{10c_0\cdot N}\ls R_0/10.$$
 Then we have
 $$c_1\cdot|\rho\widehat{\rho}|\cdot\|V-W\|\gs  \frac{c_2R'}{8}\cdot\|V-W\|-c_0R'^2=\frac{c^2_2\cdot \|V-W\|^2}{10c_0N}\Big(\frac{1}{8}-\frac{1}{10N}\Big).$$
Now the desired estimate (\ref{eq4.27}) follows from the choice of
\begin{equation}\label{4.34}
\delta :=\frac{c^2_2}{400c_0\cdot c_1\cdot N}.
\end{equation}
Therefore, the proof of this lemma is completed.
\end{proof}

\section{Hamilton--Jacobi semigroup and   Bochner  type formula}
\subsection{Hamilton--Jacobi semigroup}
Let $M$ be an $n$-dimensional Alexandrov space  and $\Omega$ be a bounded domain of $M$.
Given a continuous  and bounded function $u$ on $\Omega$, the \emph{Hamilton--Jacobi} semigroup is defined by
$$Q_tu(x)=\inf_{y\in \Omega}\Big\{u(y)+ \frac{|xy|^2}{2t}\Big\},\qquad t>0$$and $Q_0u(x)
:=u(x).$ Clearly, $Q_tu$ is semi-concave for any $t>0$, since $u(y)+|\cdot y|^2/(2t)$ is semi-concave, for each $y\in\Omega$.
 In particular, $Q_tu$ is locally Lipschitz  for any $t>0.$

If
$|xy|>\sqrt{4t\|u\|_{L^\infty}},$ then
$$u(y)+\frac{|xy|^2}{2t}>u(y)+2\|u\|_{L^\infty}\gs \|u\|_{L^\infty}.$$
On the other hand, $Q_tu(x)\ls u(x)\ls \|u\|_{L^\infty}$. We conclude that
$$Q_tu(x)=\inf_{y\in \overline{B_x(C)}\cap\Omega}\Big\{u(y)+ \frac{|xy|^2}{2t}\Big\},$$
where $C=\sqrt{4t\|u\|_{L^\infty}}.$  Therefore, for any $\Omega'\Subset\Omega$, there exists $\bar t=\bar t(\Omega', \|u\|_{L^\infty})$
 such that
\begin{equation}
\label{eq5.1}Q_tu(x)=\min_{y\in \Omega}\Big\{u(y)+ \frac{|xy|^2}{2t}\Big\}
\end{equation}
for all $x\in \Omega'$ and $0<t<\bar t.$

For convenience, we always set $u_{t}:=Q_tu$ in this section.

The following was shown in  \cite{lv07-hj} in framework of length spaces.
\begin{lem}\label{lem5.1}{\rm (Lott--Villani \cite{lv07-hj})} \indent
$(i)$ For each $x\in \Omega'$, we have $\inf u\ls u_t(x)\ls u(x)$;\\
$(ii)$ $\lim_{t\to0^+}u_t=u$ in $C(\Omega')$;\\
$(iii)$ For any $t,s>0$ and   any $x\in \Omega'$, we have
\begin{equation}\label{eq5.2}
 0\ls u_{t}(x)-u_{t+s}(x) \ls   \frac{s}{2}\cdot{\bf Lip}^2u_t,
 \end{equation}
where ${\bf Lip}u_t$ is the Lipschitz constant of $u_t$ on $\Omega'$ $($see \cite{c99} for this notation.$)$;\\
$(iv)$ For any $t>0$ and almost  all $x\in \Omega'$, we have
\begin{equation}
\label{eq5.3}\lim_{s\to0^+}\frac{u_{t+s}(x)-u_t(x)}{t}=-\frac{|\nabla  u_t(x)|^2}{2}.
\end{equation}
\end{lem}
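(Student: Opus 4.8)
The plan is to dispatch (i)--(iii) by soft arguments and to spend the real work on (iv). For (i), taking $y=x$ in the infimum gives $u_t(x)\ls u(x)$, and $u(y)+|xy|^2/(2t)\gs u(y)\gs\inf u$ gives $u_t(x)\gs\inf u$. For (ii), by \eqref{eq5.1} the minimum defining $u_t(x)$ is attained at some $y_t(x)$ for $x\in\Omega'$ and $t<\bar t$; since $u(y_t(x))+|xy_t(x)|^2/(2t)=u_t(x)\ls u(x)\ls\|u\|_{L^\infty}$ and $u(y_t(x))\gs-\|u\|_{L^\infty}$, we get $|xy_t(x)|\ls\sqrt{4t\|u\|_{L^\infty}}$, a bound uniform in $x$. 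Hence
\[ 0\ls u(x)-u_t(x)=\big(u(x)-u(y_t(x))\big)-\frac{|xy_t(x)|^2}{2t}\ls u(x)-u(y_t(x))\ls\omega\big(\sqrt{4t\|u\|_{L^\infty}}\big), \]
where $\omega$ is a modulus of continuity of $u$ on a fixed neighborhood of $\overline{\Omega'}$; letting $t\to0^+$ gives uniform convergence.

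Next I would record the semigroup identity $Q_{t+s}u=Q_s(Q_tu)$ on $\Omega'$ for small $t,s$, which underlies both (iii) and (iv). The inequality ``$\ls$'' follows by taking, for a given $y$, the point $z$ on a geodesic from $x$ to $y$ with $|xz|=\tfrac{s}{t+s}|xy|$, so that $u(y)+|yz|^2/(2t)+|xz|^2/(2s)=u(y)+|xy|^2/(2(t+s))$; the reverse inequality follows from $|xz|+|zy|\gs|xy|$ together with the elementary Cauchy--Schwarz bound $\tfrac{a^2}{2t}+\tfrac{b^2}{2s}\gs\tfrac{(a+b)^2}{2(t+s)}$, attainment of the relevant minima being guaranteed by \eqref{eq5.1}. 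Granting this, (iii) is immediate: the left inequality is just $1/(2(t+s))<1/(2t)$, and for the right one
\[ u_{t+s}(x)=Q_s(u_t)(x)=\inf_{z}\Big\{u_t(z)+\frac{|xz|^2}{2s}\Big\}\gs u_t(x)+\inf_{r\gs0}\Big\{-{\bf Lip}u_t\cdot r+\frac{r^2}{2s}\Big\}=u_t(x)-\frac{s}{2}{\bf Lip}^2u_t. \]

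For (iv), fix $t>0$. Since $u_t=Q_tu$ is semi-concave, $Reg_{u_t}$ has full measure, and I would prove the claim at every $x\in Reg_{u_t}$; if $\nabla u_t(x)=0$ both sides vanish by (iii) and $u_{t+s}(x)\ls u_t(x)$, so assume $L:=|\nabla u_t(x)|>0$. Using that the minimizer $z_s$ in $u_{t+s}(x)=\inf_z\{u_t(z)+|xz|^2/(2s)\}$ satisfies $|xz_s|\ls 2s\,{\bf Lip}u_t\to0$, I can restrict that infimum to small $|xz|$ and insert the Taylor expansion \eqref{eq2.6} of $u_t$ at the $f$-regular point $x$, which yields, for a constant $C=C(x)$ and all $s$ small,
\[ u_{t+s}(x)\gs u_t(x)+\inf_{r\gs0}\Big\{-Lr-Cr^2+\frac{r^2}{2s}\Big\}=u_t(x)-\frac{L^2s}{2(1-2Cs)}, \]
so that $\liminf_{s\to0^+} s^{-1}(u_{t+s}(x)-u_t(x))\gs-L^2/2$. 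For the matching upper bound, fix a small $\eta>0$. Since $x$ is a smooth point, Lemma \ref{lem2.1} (the density estimate \eqref{eq2.4}) shows $\mathscr W_x$ meets the ball in $T_x=\R^n$ of radius $\eta sL$ centered at $-s\nabla u_t(x)$ for all $s$ small; setting $z_s:=\exp_x(v_s)$ for such a point $v_s\in\mathscr W_x$, one has $|xz_s|=sL(1+O(\eta))$ and $\angle\big(\uparrow_x^{z_s},-\nabla u_t(x)\big)=O(\eta)$, hence by \eqref{eq2.6}
\[ u_{t+s}(x)-u_t(x)\ls u_t(z_s)-u_t(x)+\frac{|xz_s|^2}{2s}\ls-sL^2\big(1-O(\eta)\big)+\frac{sL^2(1+O(\eta))^2}{2}+O(s^2); \]
dividing by $s$, letting $s\to0^+$ and then $\eta\to0$ gives $\limsup_{s\to0^+} s^{-1}(u_{t+s}(x)-u_t(x))\ls-L^2/2$, which completes (iv).

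The only substantive obstacle is this last upper bound: one must exhibit points near $x$ lying (almost) in the prescribed steepest-descent direction $-\nabla u_t(x)/|\nabla u_t(x)|$ at a prescribed small scale, even though there need be no geodesic issuing from $x$ in that direction. This is exactly what the density property \eqref{eq2.4} of $\mathscr W_x$ at smooth points supplies, and since almost every point is smooth it is available at almost every $x$ --- precisely the strength in which (iv) is asserted. (Alternatively one may simply invoke \cite{lv07-hj}, where this lemma is proved in the generality of length spaces.)
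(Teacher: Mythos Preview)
The paper does not supply its own proof of this lemma; it simply attributes the result to Lott--Villani \cite{lv07-hj} and states it without argument. Your proposal therefore goes beyond what the paper does by giving a self-contained proof.

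Your arguments for (i)--(iii) are standard and correct; the semigroup identity $Q_{t+s}u=Q_s(Q_tu)$ is verified exactly as you indicate. For (iv), your approach of working at $x\in Reg_{u_t}$, using the second-order expansion \eqref{eq2.6} of the semi-concave function $u_t$, is sound. The lower bound is straightforward. For the upper bound, your idea of invoking Lemma~\ref{lem2.1} to locate a point of $\mathscr W_x$ in the small ball of radius $\eta sL$ about $-s\nabla u_t(x)\in T_x$ is correct: since $H^n\big(B_o(r)\setminus\mathscr W_x\big)=o(r)\cdot H^n(B_o(r))$ while the target ball has measure $c_n(\eta sL)^n$, the target ball must meet $\mathscr W_x$ once $s$ is small (for each fixed $\eta>0$). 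This is the genuine point where the Alexandrov structure enters, and you have identified it accurately. One minor remark: your sentence dispatching the case $\nabla u_t(x)=0$ by appealing to (iii) is not quite right, since (iii) involves the global Lipschitz constant ${\bf Lip}u_t$ rather than the pointwise one; but this case is already covered by your general lower-bound estimate with $L=0$, so no separate treatment is needed.

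In short, the paper defers to \cite{lv07-hj}; your argument is a legitimate direct proof that stays within the paper's own toolkit, relying on the density estimate \eqref{eq2.4} in place of whatever length-space machinery Lott--Villani use.
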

The following lemma is similar to Lemma 3.5 in \cite{b08}.
\begin{lem}
 \label{lem5.2} Let $t>0$. Assume $u_t$ is differentiable at $x\in \Omega'$.  Then there exists a unique point  $y\in \Omega$
   such that \begin{equation}\label{eq5.4}
u_t(x)=u(y)+\frac{|xy|^2}{2t}.
\end{equation}
 Furthermore, the direction $\uparrow_x^y$ is determined uniquely and
\begin{equation}\label{eq5.5}
|xy|\cdot\uparrow^y_x= -t\cdot \nabla u_t(x).
\end{equation}
\end{lem}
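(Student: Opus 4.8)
The plan is to read the minimizer and its direction directly off the differential of $u_t$ at $x$, using the trivial Hamilton--Jacobi inequality $u_t(\,\cdot\,)\le u(y)+|\cdot\,y|^2/(2t)$, which is an equality at $x$, together with the first variation of arc-length. By \eqref{eq5.1} the infimum defining $u_t(x)$ is attained at some $y\in\Omega$, i.e. \eqref{eq5.4} holds; fix such a $y$ and fix a direction $\uparrow_x^y\in\Sigma_x$ of \emph{some} geodesic from $x$ to $y$ (a priori there could be several). Since differentiability at $x$ presupposes, via \eqref{eq2.16}, that $x$ is regular, we have $T_x\cong\R^n$ and $\Sigma_x\cong\mathbb S^{n-1}$, and the set $\Sigma_x'$ of directions of geodesics issuing from $x$ is dense in $\Sigma_x$. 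Everything in the statement will follow once we prove the identity
\[
|xy|\cdot\uparrow_x^y=-t\,\nabla u_t(x),
\]
which is precisely \eqref{eq5.5}.

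To establish this, I would take an arbitrary geodesic $\gamma\colon[0,\epsilon)\to\Omega$ with $\gamma(0)=x$. Since $\mathrm{dist}_y$ is semi-concave near $x$ and $\uparrow_x^y$ is a supporting vector of it (the first variation formula, recalled just before Proposition \ref{prop3.1}), we get $|\gamma(s)y|\le|xy|-s\,\ip{\uparrow_x^y}{\gamma'(0)}+o(s)$, and hence $|\gamma(s)y|^2\le|xy|^2-2s\,|xy|\,\ip{\uparrow_x^y}{\gamma'(0)}+o(s)$. Substituting into $u_t(\gamma(s))\le u(y)+|\gamma(s)y|^2/(2t)=u_t(x)+\big(|\gamma(s)y|^2-|xy|^2\big)/(2t)$ and comparing with the differentiability expansion $u_t(\gamma(s))-u_t(x)=s\,\ip{\nabla u_t(x)}{\gamma'(0)}+o(s)$, then dividing by $s>0$ and letting $s\to0^+$, yields $\ip{w}{\gamma'(0)}\le0$, where $w:=\nabla u_t(x)+\tfrac{|xy|}{t}\uparrow_x^y$. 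This holds for every geodesic issuing from $x$; since $\Sigma_x'$ is dense in $\mathbb S^{n-1}$ and $\eta\mapsto\ip{w}{\eta}$ is linear (hence continuous), the inequality extends to all $\eta\in\Sigma_x$, and choosing $\eta=w/|w|$ forces $w=0$, which is \eqref{eq5.5}. (If $y=x$, the computation degenerates to $\ip{\nabla u_t(x)}{\gamma'(0)}\le0$ for all geodesic directions, whence $\nabla u_t(x)=0$ and \eqref{eq5.5} holds trivially.)

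The uniqueness assertions then fall out. Taking norms in \eqref{eq5.5}, $|xy|=t\,|\nabla u_t(x)|$ for \emph{every} minimizer $y$; so if $\nabla u_t(x)=0$ then $x$ is the only minimizer, and if $\nabla u_t(x)\ne0$ then every minimizer lies at distance $t|\nabla u_t(x)|$ from $x$ and every geodesic joining it to $x$ has initial direction $\uparrow_x^y=-\nabla u_t(x)/|\nabla u_t(x)|$ --- in particular this direction is uniquely determined, independently of $y$ and of the geodesic. Since geodesics in $M$ do not branch, two geodesics from $x$ sharing this initial direction and this length must coincide, so $y$ (and the geodesic $[xy]$) is unique. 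The only genuinely delicate point is that \eqref{eq2.16} controls $u_t$ only along honest geodesics emanating from $x$, so the step ``$\ip{w}{\gamma'(0)}\le0$ for all geodesic directions $\Rightarrow w=0$'' must be justified through the density of $\Sigma_x'$ in $\Sigma_x\cong\mathbb S^{n-1}$ and the linearity of the pairing; the rest is a routine first-variation estimate combined with non-branching.
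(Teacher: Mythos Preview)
Your proof is correct and follows essentially the same route as the paper: compare the Hamilton--Jacobi inequality $u_t(\gamma(s))\le u(y)+|\gamma(s)y|^2/(2t)$ with the Taylor expansion of $u_t$ along geodesics from $x$, use the first variation of $dist_y$, and pass from geodesic directions to all of $\Sigma_x$ by density. The paper concludes by replacing $\xi$ with $-\xi$ (using linearity of $d_xu_t$) to upgrade the inequality to an equality, which is the same mechanism as your ``$\ip{w}{\eta}\le0$ for all $\eta$, hence $w=0$'' step; your explicit appeal to non-branching to pin down $y$ makes the uniqueness argument slightly more transparent than the paper's terse ``the left-hand side does not depend on the choice of $y$.''
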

\begin{proof}

Now fix a regular point $x$. We choose arbitrarily  $y$  such that (\ref{eq5.4}) holds.
 Taking any geodesic $\gamma(s): [0,\epsilon)\to M$ with $\gamma(0)=x$, by the definition of $u_t$ and (\ref{eq5.4}), we have \begin{equation}\label{eq5.6}
 u_t(\gamma(s))-u_t(x)\ls \frac{|y\gamma(s)|^2}{2t}-\frac{|xy|^2}{2t}.
 \end{equation}
If $x=y$, we have $\nabla u_t(x)=0.$ Hence equation \eqref{eq5.5} holds.

If $x\not=y$, by using the differentiability  of $u_t$ at $x$ and the first variant formula, we have
\be{equation}
{\label{eq5.7}u_t(\gamma(s))=u_t(x)+d_xu_t(\gamma'(0))\cdot s+o(s)}
and
\begin{equation}
 \label{eq5.8}
  \frac{|y\gamma(s)|^2}{2t}- \frac{|xy|^2}{2t}\ls -\frac{|xy|}{t}\cdot \ip{\uparrow^y_x}{\gamma'(0)}\cdot s+o(s)
  \end{equation}
for any direction $\uparrow^y_x$ from $x$ to $y$.
By combining (\ref{eq5.6})--(\ref{eq5.8}), we have
 \begin{equation*}d_xu_t(\gamma'(0))\ls -\frac{|xy|}{t}\cdot \ip{\uparrow^y_x}{\gamma'(0)}\end{equation*}
 for all geodesic $\gamma$ with $\gamma(0)=x.$
For each $\xi\in \Sigma_x$, we take a sequence geodesics $\gamma(t)$ starting from $x$ such that $\gamma'(0)$ converges to $\xi$.
 Then we have
 \begin{equation}\label{eq5.9}
 d_xu_t(\xi)\ls -\frac{|xy|}{t}\cdot \ip{\uparrow^y_x}{\xi}
 \end{equation}
for all $\xi\in\Sigma_x$.

Since $u_t$ is differentiable at $x$, we know that the direction $-\xi$ exists and $d_xu(-\xi)=-d_xu(\xi)$. By replacing
  $\xi$ by $-\xi$ in the above inequality, we obtain
$$\nabla u_t(x)= -\frac{|xy|}{t}\cdot \uparrow^y_x.$$
The left-hand side does not depend on the choices of point $y$ and direction of $\uparrow_x^y$. This gives the desired assertion.
\end{proof}

For each $t>0$, we  define a  map $F_t:\ \Omega'\to \Omega$ by $F_t(x)$ to be one of point  such that
\begin{equation}\label{eq5.10}
u_t(x)=u\big(F_t(x)\big)+ \frac{|xF_t(x)|^2}{2t}.
\end{equation}
According to  the Lemma \ref{lem5.2} and Rademacher theorem (\cite{c99,b08}), we have, for almost all  $x\in\Omega'$,
\begin{equation}\label{eq5.11}
 |xF_t(x)|= t\cdot|\nabla u_t(x)|.
\end{equation}
 By Lemma \ref{lem5.2} again, $F_t$ is continuous at $x$, where $u_t$ is differentiable  (since the point $y$
  satisfying \eqref{eq5.4} is unique). Then $F_t$ is measurable.

In \cite{petu96}, Petrunin sketched a proof of his key Lemma, which states that, on an Alexandrov space with nonnegative curvature,
 $u_t$ is superharmonic on $\Omega'$ for each $t>0$ provided $u$ is supersolution of $\mathscr L_u=0$ on $\Omega$.
 The following proposition is an extension.

\be{prop}{\label{prop5.3}
  Let $M$ be an $n$-dimensional Alexandrov space with $Ric\gs-K$ and $\Omega$ be a bounded domain of $M$.
Assume that $u\in W^{1,2}(\Omega)\cap C(\Omega)$, $f\in L^\infty(\Omega)$ is upper semi-continuous for almost all $x\in\Omega$ and
$$\mathscr L_u\ls f\cdot\rv$$
in the sense of measure.
Then, for any $\Omega'\Subset\Omega$,
these exists some $t_0>0$ such that for all $0<t<t_0$, we have
\begin{equation}\label{eq5.12}
a^2\cdot \mathscr L_{ u_t}\ls \Big[f\circ F_t+\frac{n(a-1)^2}{t}+\frac{Kt}{3}(a^2+a+1)|\nabla u_t|^2\Big]\cdot\rv
\end{equation} on $\Omega'$ for all $a>0$.
}
\begin{proof}
 We divide the proof into the following four steps.

\noindent\textit{Step 1. Setting up a contradiction argument.}\indent

 Since, for almost all $x\in \Omega$, $f$ is upper semi-continuous  and $|xF_t(x)| =t|\nabla u_t(x)|,$
   it is sufficient to prove that
 there exists some $t_0>0$ such that for all $0<t<t_0$, we have
\begin{equation}
\label{eq5.13}a^2\cdot \mathscr L_{u_t}\ls \Big[\sup_{z\in B_{F_t(x)}(\theta)}f(z) +\frac{n(a-1)^2}{t}+\frac{K}{3t}\big(a^2+a+1\big)\cdot|xF_t(x)|^2+\theta\Big]\cdot\rv
\end{equation} on $\Omega'$ for all $a>0$ and all $\theta>0$.

 For each $t>0$,  $a>0$ and $\theta>0$, we set
 \begin{equation}
 \label{eq5.14}a^2\cdot  w_{t,a,\theta}(x)= \sup_{z\in B_{F_t(x)}(\theta)}f(z)+\frac{n(a-1)^2}{t}+\frac{K}{3t}\big(a^2+a+1\big)\cdot|xF_t(x)|^2+\theta.
 \end{equation}

  For each $t>0$, $a>0$ and $\theta>0$,   since $u_t$ is semi-concave,
 $|\nabla u_t|\in L^\infty(\Omega')$ and hence, we have $w_{t,a,\theta}\in L^\infty(\Omega')$. Noting that  $u_t$ is semi-concave again,
  it is sufficient to prove that $u_t$ satisfies the corresponding comparison property in Corollary \ref{cor4.6}
   for all sufficiently small $t>0$.

Let us argue by contradiction. Suppose that there exists a sequences of $t_j\to0^+$ as $j\to\infty$, a sequence $a_j>0$
 and a sequence $\theta_j>0$ satisfying the following: for each $t_j$ ,$a_j$ and $\theta_j$, we can find $p_j$ and $R_j>0$
  with $a_jR_j+R_j\to0^+$ and  $B_{p_j}(R_j)\Subset\Omega'$, such that the corresponding comparison property
   in Corollary \ref{cor4.6} is false. That is, if the function $v_j$ is the solution of equation
$$\mathscr L_{v_j}=-w_{t_j,a_j,\theta_j}\cdot\rv$$
 in $B_{p_j}(R_j)$ with boundary value $v_j=-u_{t_j}$ on $\partial B_{p_j}(R_j)$,
then the function $u_{t_j}+v_j$ has a minimum point in the interior of $B_{p_j}(R_j)$ and
$$\min_{x\in B_{p_j}(R_j)}(u_{t_j}+v_j)< \min_{x\in \partial B_{p_j}(R_j)}(u_{t_j}+v_j).$$
We call this case that $u_{t_j}+v_j$ has a strict minimum in the interior of $B_{p_j}(R_j)$.

 Since $\Omega'$ is bounded, we can assume that some subsequence of $\{p_j\}_{j=1}^\infty$ converges to a limit point $p_\infty$.
  Denote the subsequence by $\{p_j\}_{j=1}^\infty$ again.  So we can choose a convex neighborhood $U\Subset\Omega$ of $p_\infty$
  and a Perelman concave function $h$ on $U$ given in Lemma \ref{lem3.3}. Since $u$ is bounded, by
 $|xF_t(x)|^2\ls 4t\|u\|_{L^\infty(\Omega)}$, we have $|xF_{t_j}(x)|\to0$ as $j\to\infty$ uniformly on $\Omega'$. Now we fix some
  $j^*$ so large  that
 $$B_{p_{j^*}}\big(a_{j^*}R_{j^*}+R_{j^*}\big)\cup B_{F_{t_{j^*}}(p_{j^*})}\big(a_{j^*}R_{j^*}+R_{j^*}\big)\subset U$$
 and
$F_{t_{j^*}}(x)\in U$ for all $x\in B_{p_{j^*}}\big(a_{j^*}R_{j^*}+R_{j^*}\big).$\\

\noindent\textit{ Step 2. Perturbing the functions to achieve the minimums at smooth points}.

  From now on, we omit the index $j^*$ to simplify the notations.

 Let $x_1$ be a minimum of $u_t+v$ in the interior of $B_p(R)$. Because  $h$ is $2-$Lipschitz on $U$, for any sufficiently small
   positive number $\epsilon_0$, the function
 $$u_t+v+\epsilon_0h$$
  also achieves a strict minimum at some point $\bar x$ in the interior of $B_p(R).$ Noting that   $u_t$ is semi-concave and
    $w_{t,a,\theta}$ is bounded and
  $\mathscr L_{v }\ls  -w_{t,a,\theta}\cdot\rv$,
  according to Lemma \ref{lem4.15}, we know $\bar x$ is regular and that $u_t$ is differentiable at $\bar x$.
  Now we fix such a sufficiently small $\epsilon_0$.

  On the other hand, according to the condition $Ric\gs-K$ and Laplacian comparison (see \cite{zz10} or \cite{ks07}),
   we have $\mathscr L_{|x\bar x|^2}\ls c(n,K,{\rm diam}\Omega).$ Thus, by the fact $h$ is $(-1)$-concave,
    we can choose some sufficiently small positive number $\epsilon_0'$ such that
   $$\mathscr L_{\epsilon_0h+\epsilon_0'|x\bar x|^2}\ls0.$$
 Setting $v_0=v+\epsilon_0h+\epsilon_0'|x\bar x|^2$, we have that   the function
 $$u_t+v_0=u_t +v+\epsilon_0h+\epsilon_0'|x\bar x|^2$$
  achieves a \emph{unique} minimum at $\bar x$ and
  $$\mathscr L_{v_0}=\mathscr L_v+ \mathscr L_{\epsilon_0h+\epsilon_0'|x\bar x|^2}\ls \mathscr L_v=-w_{t,a,\theta}\cdot\rv.$$

 Consider function
$$H(x,y)=v_0(x)+u(y)+\frac{|xy|^2}{2t},\qquad (x,y)\in \Omega\times\Omega.$$
Then it achieves a \emph{unique} strict  minimum at $(\bar x,F_t(\bar x))\in B_p(R)\times U.$
Indeed,
 $$H(x,y)\gs u_t(x)+v_0(x)\gs u_t(\bar x)+v_0(\bar x)
 = u(F_t(\bar x))+\frac{|\bar x F_t(\bar x)|}{2t}+v_0(\bar x)=H(\bar x,F_t(\bar x)).$$
Since $\bar x$ is a regular point and $u_t$ is differentiable at $\bar x$, by Lemma \ref{lem5.2},
 the point pair $(\bar x,F_t(\bar x))$
 is the unique  minimum of $H$ in $ B_p(R)\times U.$\\

 Applying the fact that $h$ is $2$-Lipschitz on $U$, we know that, for any sufficiently small
  positive number $\epsilon_1$,
 $$H_1(x,y):=v_1(x)+u_1(y)+\frac{|xy|^2}{2t}$$
  also achieves its a strict minimum in the  interior of $B_p(R)\times U$, where
  $$v_1(x)=v_0(x)+\epsilon_1h(x)\quad{\rm and }\quad u_1(y)=u(y)+\epsilon_1h(y).$$
   Let  $(x^*,y^*)$ denote one of minimal point of $H_1$.

 By the condition $Ric\gs-K$ and Laplacian comparison (see \cite{zz10} or \cite{ks07}), we have
 $$\mathscr L_{|xx^*|^2}\ls c(n,K,{\rm diam}\Omega)\quad {\rm and }\quad \mathscr L_{|yy^*|^2}\ls c(n,K,{\rm diam}\Omega).$$
 Since
 $$H_1(x,y^*)=v_0(x)+u_1(y^*)+\frac{|xy^*|^2}{2t}+\epsilon_1h(x)$$
 is continuous and $w_{t,a,\theta}$ is bounded, we know that
 $$\mathscr L_{v_0+u_1(y^*)+\frac{|xy^*|^2}{2t}}\ls (-w_{t,a,\theta}+\frac{c(n,K,{\rm diam}\Omega)}{2t})\cdot\rv\ls \lambda \cdot\rv$$
  on $B_p(R)$ for some constant $\lambda\in \R$ and $H_1(x,y^*)$  has a minimum at $x^*$.
   By Lemma \ref{lem4.15}, we know that  $x^*$ is regular. The point $y^*$ is also regular,
    by the boundness of $f$ and the same argument.

Let $v_2(x)=v_1(x)+\epsilon_2|xx^*|^2$ and $u_2(y)=u_1(y)+\epsilon_2|yy^*|^2$ with  any positive number $\epsilon_2.$ Then $$H_2(x,y):=v_2(x)+u_2(y)+\frac{|xy|^2}{2t}$$
achieves a unique minimum point $(x^*,y^*)$.

Since $(x^*,y^*)$ is regular in $M\times M$, now we choose one almost orthogonal coordinate system near $x^*$
 by concave functions $g_1, g_2,\cdots, g_n$ and another almost orthogonal coordinate system near $y^*$ by concave functions $g_{n+1}$, $g_{n+2}$,$\cdots, g_{2n}$.
  Using Lemma \ref{lem4.16}, there exist arbitrarily small positive numbers $b_1,\ b_2,\cdots, b_{2n}$ such that
$$H_2(x,y)+\sum_{i=1}^n b_ig_i(x)+\sum_{i=n+1}^{2n} b_ig_i(y)$$
achieves a minimal point $(x^o,y^o)$ near point $(x^*,y^*)$, where $(x^o,y^o)$ satisfies the following properties:\\
\indent(1)\indent $x^o\not=y^o$;\\
\indent(2)\indent $x^o$ is a $dist_{y^o}$-regular point and $y^o$ is a $dist_{x^o}$-regular point (hence, they are smooth);\\
\indent(3)\indent  geodesic $x^oy^o$ can be extended beyond $x^o$ and $y^o$;\\
\indent(4)\indent  $y^o$ is a Lebesgue  point of  $f$;\\
\indent(5)\indent $x^o$ is a Lebesgue point of  $w_{t,a,\theta}$;\\
\indent(6)\indent $x^o$ is a Lebesgue point of  $\Delta (|xy^o|^2)$ and $y^o$ is a Lebesgue point of  $\Delta (|x^oy|^2)$,\\
where $\Delta (|xy^o|^2)$ (or $ \Delta (|x^oy|^2)$) is density of absolutely continuous part of $\mathscr L_{|xy^o|^2}$ (or
$ \mathscr L_{|x^oy|^2}$, resp.).

 Indeed, let $\mathcal A$ be the set of points satisfying all of conditions (1)--(6) above. It is easy to check that
 $H^{2n}\big((B_p(R)\times U)\backslash \mathcal A\big)=0.$ By applying Lemma  \ref{lem4.16}, we can find desired $(x^o,y^o)$.

Set $$v_3(x)=v_2(x)+\sum_{i=1}^n b_ig_i(x)\qquad {\rm and }\qquad u_3(y)=u_2(y)+\sum_{i=n+1}^{2n} b_ig_i(y).$$Then
$$H_3(x,y):=v_3(x)+u_3(y)+\frac{|xy|^2}{2t}$$
 has a minimal value at $(x^o,y^o).$\\

\noindent\textit{Step 3. Ricci curvature and second variation of arc-length.}

  Let $\gamma:[0,\bar s]\to U$ be a geodesic with $x^o,y^o\in \gamma\backslash\{\gamma(0),\gamma(\bar s)\}.$
 Put $x^o=\gamma(t_x)$ and $y^o=\gamma(t_y)$ with $0< t_x<t_y<\bar s.$ Assume that some neighborhood of $\gamma$ has curvature $\gs k_0$,
  for some $k_0\in\R.$
For each $t\in(0,\bar s)$, the tangent cone $T_{\gamma(t)}$ can be split isometrically into $T_{\gamma(t)}=\R\times L_{\gamma(t)}$.
 We denote
$$\Lambda_{\gamma(t)}=\Sigma_{\gamma}\cap L_{\gamma(t)}=\{\xi\in \Sigma_{\gamma(t)}\ \big | \ \ip{\xi}{\gamma'}=0.\}.$$

Fix an arbitrarily small positive number $\epsilon_3$. According the definition of $M$ having Ricci curvature $\gs-K$ along geodesic
 $\gamma$ (see Definition \ref{defn2.5}), for each $t_0\in [t_x,t_y],$ there exists an open neighborhood $I_{t_0}\ni t_0$ and
 a family functions $\{g_{\gamma(t)}\}_{t\in I_{t_0}}$ such that $\{g_{\gamma(t)}\}_{t\in I_{t_0}}$ satisfies $Condition\ (RC)$ and
\begin{equation}\label{eq5.15}
(n-1)\cdot\fint_{\Lambda_{\gamma(t)}}g_{\gamma(t)}(\xi)d\xi\gs- K-\epsilon_3, \qquad \forall t\in I_{t_0}.
\end{equation}
It is shown in \cite{zz10} that
\begin{equation}\label{eq5.16} |g_{\gamma(t)}|\ls C,\qquad \forall t\in I_{t_0}
\end{equation}
for some constant $C$ depends only on the distance $|x^o\gamma(0)|, |y^o\gamma(\bar s)|$, $|I_{t_0}|$ and
 the lower bound  $k_0$ of curvature on some neighborhood of $\gamma$.  For completeness, we recall its proof as follows.
   Since the family $\{\underline{g}_{\gamma(t)}=k_0\}$ satisfies $Condition\ (RC)$  (see Remark \ref{cuvtoRic}), we can assume that $g_{\gamma(t)}\gs k_0$. Otherwise, we replace $g_{\gamma(t)}$ by $g_{\gamma(t)}\vee k_0.$
    On the other hand, for any $q_1,q_2\in \gamma|_{I_{t_0}}$ with $|q_1q_2|\gs|I_{t_0}|/2$,
     letting isometry $T: \Sigma_{q_1}\to\Sigma_{q_2}$ and sequence $\delta_j$ be in the definition
      of $Condition\ (RC)$ (see Definition \ref{defn2.4}), by applying equation \eqref{eq2.17} with $l_1=l_2=1$ and $\ip{\xi}{\gamma'}=0$,
       we have
$$|\exp_{q_1}(\delta_j\xi)\ \exp_{q_2}(\delta_jT\xi)|\ls|q_1q_2| -g_{q_1}(\xi)\cdot|q_1q_2|\cdot \delta_j^2/2+o(\delta_j^2).$$
By the concavity of distance functions $dist_{\gamma(0)}$ and $dist_{\gamma(\bar s)}$, we get
$$|\gamma(0)\ \exp_{q_1}(\delta_j\xi)|\ls |\gamma(0)\ q_1|+C_{k_0,|\gamma(0)x^o|}\cdot \delta_j^2$$  and $$|\gamma(\bar s)\ \exp_{q_2}(\delta_jT\xi)|\ls |\gamma(\bar s)\ q_2|+C_{k_0,|\gamma(\bar s)y^o|}\cdot \delta_j^2.$$
Combining with triangle inequality
$$|\exp_{q_1}(\delta_j\xi)\ \exp_{q_2}(\delta_jT\xi)|\gs |\gamma(0)\ \gamma(\bar s)|-|\gamma(0)\ \exp_{q_1}(\delta_j\xi)|-|\gamma(\bar s)\ \exp_{q_2}(\delta_jT\xi)|,$$
we can obtain
$$g_{q_1}(\xi)\ls \frac{2}{|q_1q_2|}\cdot(C_{k_0,|\gamma(0)x^o|}+C_{k_0,|\gamma(\bar s)y^o|})\ls \frac{4}{|I_{t_0}|}\cdot(C_{k_0,|\gamma(0)x^o|}+C_{k_0,|\gamma(\bar s)y^o|}).$$

All of such neighborhood $I_{t_0}$ forms an open covering of $[t_x,t_y]$. Then there exists a sub-covering $I_1,I_2,\cdots, I_S.$
 Now we divide $[t_x,t_y]$ into $N$-equal part by
$$x_0=x^o, x_1,\cdots\, x_m,\cdots, x_N=x_N.$$
We can assume that any pair of adjacent $x_m, x_{m+1}$ lying into some same $I_{\alpha}, \ \alpha\in\{1,2,\cdots, S\}$.

By $Condition\ (RC)$, we can find a  sequence  $\{\delta_j\}$ and an
isometry $T_0: \Sigma_{x_0}\to \Sigma_{x_1}$ such  that equation \eqref{eq2.17} holds.  Next, we can find a further
subsequence $\{\delta_{1,j}\}\subset\{\delta_j\} $ and an isometry an
isometry $T_1: \Sigma_{x_1}\to \Sigma_{x_2}$ such  that equation \eqref{eq2.17} holds.
  After a finite steps of these procedures, we get a subsequence
$  \{\delta_{N-1,j}\}\subset\cdots\subset \{\delta_{1,j}\}\subset\{\delta_j\} $
  and a family isometries $T_m: \Sigma_{x_m}\to \Sigma_{x_{m+1}}$ such that, for each $m = 0, 1, . . . , N-1,$
\begin{equation*} \begin{split}
 |\exp_{x_m}&(\delta_{N-1,j} l_{1,m}\xi_m),\ \exp_{x_{m+1}}(\delta_{N-1,j} l_{2,m}T_m\xi_m)|\\ \ls  &|x_mx_{m+1}|+(l_{2,m}-l_{1,m})\ip{\xi_m}{\gamma'}\cdot \delta_{N-1,j}\\&+\Big(\frac{(l_{1,m}-l_{2,m})^2}{2|x_mx_{m+1}|}
-\frac{g_{x_m}(\xi_m^\bot)\cdot|x_mx_{m+1}|}{6}\cdot(l_{1,m}^2+l_{1,m} \cdot l_{2,m}+l^2_{2,m})\Big)\\ &\quad\cdot\Big(1-\ip{\xi_m}{\gamma'}^2\Big)\cdot\delta^2_{N-1,j}\\&+o(\delta_{N-1,j}^2)
\end{split}\end{equation*}
 for any $l_{1,m},l_{2,m}\gs0$ and any $\xi_m\in \Sigma_{x_m}$.

Denote the isometry $T: \Sigma_{x^o}\to \Sigma_{y^o}$ by
$$T=T_{N-1}\circ\cdots\circ T_1\circ T_0.$$
It is can be extend naturally to an isometry $T: T_{x^o}\to T_{y^o}$.\\

 We fix $a\gs0$ and
 $$a_m=\frac{m}{N}\cdot(1-a)+a,\qquad m=0,1,\cdots, N-1.$$
We have $a_m\gs0$, and $a_0=a, a_N=1$.

To simplify notations, we put $\{\delta_j\}=\{\delta_{N-1,j}\}$  and denote
$$\mathscr W=\{v\in T_{x_0}\ \big |\ av\in \mathscr W_{x^o}\quad{\rm and }\quad T v\in \mathscr W_{y^o}\}.$$

\noindent{\bf Claim 1:} We have
\begin{equation}\label{eq5.17}\begin{split}
&\int_{B_o(\delta_j)\cap \mathscr W}\Big(|\exp_{x^o}(a\eta)\ \exp_{y^o}(T\eta) |^2-|x^oy^o|^2\Big)dH^n(\eta)\\
&\qquad\ls\frac{\omega_{n-1}}{(n+2)}\cdot \delta^{2+n}_j\cdot \Big((1-a)^2
+\frac{(K+\epsilon_3)\cdot|x^oy^o|^2}{3n}\cdot \big(a^2+a +1\big) \Big)\\&\qquad +o(\delta_{j}^{n+2}).\end{split}
\end{equation}

By applying $Condition\ (RC)$, we have
\begin{equation*} \begin{split}
 |\exp_{x_m}&(\delta_{j} a_{m}\cdot b\xi_m),\ \exp_{x_{m+1}}(\delta_{j} a_{m+1}\cdot b\xi_{m+1})|\\
  \ls  & \frac{\ell}{N}+(a_{m+1}-a_{ m})\cdot b\ip{\xi}{\gamma'}\cdot \delta_{j}\\&+b^2\cdot\Big(\frac{N\cdot(a_{ m}-a_{m+1})^2}{2\ell}
-\frac{g_{x_m}(\xi_m^\bot)\cdot\ell}{6N}\cdot(a_{m}^2+a_{m} \cdot a_{m+1}+a^2_{m+1})\Big)\\&\quad\cdot\Big(1-\ip{\xi}{\gamma'}^2\Big)\cdot\delta^2_{j}\\&+o(\delta_{j}^2)\end{split}
\end{equation*}
 for any  $b\in[0,1]$ and  any $\xi \in \Sigma_{x_0}$, where $\ell=|x_0x_N|=|x^oy^o|$ and
 $$\xi_m:=T_m\circ T_{m-1}\circ\cdots\circ T_0\xi.$$
Hence, by combining the triangle inequality, we have
\begin{equation*} \begin{split} |\exp_{x_0}&(\delta_{j} a_{0}\cdot b\xi),\ \exp_{x_{N}}(\delta_{j} a_{N}\cdot b\xi_{N})|\\\ls& \sum_{m=0}^{N-1}
|\exp_{x_m}(\delta_{j} a_{m}\cdot b\xi_m),\ \exp_{x_{m+1}}(\delta_{j} a_{m+1}\cdot b\xi_{m+1})|\\
\ls&\ell +(a_{N}-a_{0})\ip{\xi}{\gamma'}b\cdot \delta_{j}\\&+b^2\cdot\sum_{m=0}^{N-1}\Big(\frac{N\cdot(a_{m}-a_{m+1})^2}{2\ell }
-\frac{g_{x_m}(\xi_m^\bot)\cdot\ell}{6N}\cdot(a_{m}^2+a_{m} \cdot a_{m+1}+a^2_{m+1})\Big)\\ &\qquad\qquad\cdot\Big(1-\ip{\xi}{\gamma'}^2\Big)\cdot\delta^2_{j}\\&+o(\delta_{j}^2)
\end{split}\end{equation*}
 for any $b\in[0,1]$. This is, by setting $v=b\xi$,
\begin{equation} \label{eq5.18}
\begin{split} |\exp_{x^o}&(\delta_{j} a v),\ \exp_{y^o}(\delta_{j}  Tv)|^2-|x^oy^o|^2
 \\ \ls  & 2\ell\cdot(1-a)\ip{v}{\gamma'}\cdot \delta_{j} +(1-a)^2\ip{v}{\gamma'}^2\cdot \delta^2_{j}\\&+\sum_{m=0}^{N-1}\Big(N\cdot(a_{m}-a_{m+1})^2
-\frac{g_{x_m}(\xi_m^\bot)\cdot\ell^2}{3N}\cdot(a_{m}^2+a_{m} \cdot a_{m+1}+a^2_{m+1})\Big)\\&\qquad\cdot\Big(|v|^2-\ip{v}{\gamma'}^2\Big)\cdot\delta^2_{j}\\&+o(\delta_{j}^2)
\end{split}\end{equation}
 for  any vector $v\in B_o(1)\subset T_{x_0}$.

Let $\mathscr F_j(v)$ be the function defined on $B_o(1)\subset T_{x_0}$ by
\begin{equation*}
\begin{split}\mathscr F_j(v):=\ &|\exp_{x^o}(\delta_{j} a v),\ \exp_{y^o}(\delta_{j}  Tv)|^2-|x^oy^o|^2
 \\ &- 2\ell\cdot(1-a)\ip{v}{\gamma'}\cdot \delta_{j} -(1-a)^2\ip{v}{\gamma'}^2\cdot \delta^2_{j}\\
  &-\sum_{m=0}^{N-1}\Big(N\cdot(a_{m}-a_{m+1})^2
-\frac{g_{x_m}(\xi_m^\bot)\cdot\ell^2}{3N}\cdot(a_{m}^2+a_{m} \cdot a_{m+1}
+a^2_{m+1})\Big)\\
&\qquad\cdot\Big(|v|^2-\ip{v}{\gamma'}^2\Big)\cdot\delta^2_{j}.
\end{split}\end{equation*}

 For any $v\in B_o(1)$, we rewrite \eqref{eq5.18} as
$$\limsup_{j\to\infty}\mathscr F_j(v)/\delta_j^2\ls0.$$

Next, we will prove that $\mathscr F_j(v)/\delta_j^2$ has a uniformly upper bound on $B_o(1).$
 Take the midpoint $z$ of $x^o$ and $y^o$. By the semi-concavity of distance function $dist_z$, we have
$$|z\ \exp_{x^o}(\delta_j\cdot av)| \ls |zx^o|-a\ip{v}{\gamma'}\delta_j+C_{k_0,|x^oy^o|}\cdot \delta_j^2$$
and
$$|z\ \exp_{y^o}(\delta_j\cdot Tv)| \ls |zy^o|+ \ip{Tv}{\gamma'}\delta_j+C_{k_0,|x^oy^o|}\cdot \delta_j^2.$$
By applying triangle inequality, we get
$$|\exp_{x^o}(\delta_j\cdot av)\ \exp_{y^o}(\delta_j\cdot Tv)|\ls
  |x^oy^o| +(1-a)\ip{v}{\gamma'}\delta_j+2C_{k_0,|x^oy^o|}\cdot \delta_j^2.$$
Hence
$$|\exp_{x^o}(\delta_j\cdot av)\ \exp_{y^o}(\delta_j\cdot Tv)|^2-|x^oy^o|^2\ls
  2\ell\cdot(1-a)\ip{v}{\gamma'}\delta_j+(4C^2+(1-a)^2) \cdot \delta_j^2.$$
By combining with the boundness of $g_{x_m}$ (i.e., equation \eqref{eq5.16}), we conclude that $\mathscr F_j(v)/\delta^2_j\ls C$.

Now, by applying Fatou's Lemma, we have
$$\limsup_{j\to\infty}\int_{B_o(1)}\frac{\mathscr F_j(v)}{\delta^2_j}dH^n(v)\ls\int_{B_o(1)}\limsup_{j\to\infty}\frac{\mathscr F_j(v)}{\delta^2_j}dH^n(v)\ls 0.$$
That is,
\begin{equation} \label{eq5.19}
\begin{split} \int_{B_o(1)}&\Big(|\exp_{x^o}(\delta_{j} a v),\ \exp_{y^o}(\delta_{j}  Tv)|^2-|x^oy^o|^2\Big)dH^n(v)
 \\ \ls  & 2\ell\cdot(1-a)\int_{B_o(1)}\ip{v}{\gamma'}dH^n(v)\cdot \delta_{j} +(1-a)^2\int_{B_o(1)}\ip{v}{\gamma'}^2dH^n(v)\cdot \delta^2_{j}\\&+\sum_{m=0}^{N-1}\Big(N\cdot(a_{m}-a_{m+1})^2\cdot\int_{B_o(1)}\Big(|v|^2
 -\ip{v}{\gamma'}^2\Big)dH^n(v)\cdot \delta^2_j\\
& -\frac{\ell^2}{3N}\cdot\sum_{m=0}^{N-1}(a_{m}^2+a_{m} \cdot a_{m+1}+a^2_{m+1})\\&\qquad \qquad \cdot\int_{B_o(1)}g_{x_m}(\xi_m^\bot)\cdot\Big(|v|^2-\ip{v}{\gamma'}^2\Big)dH^n(v)\cdot\delta^2_{j}\\& +o(\delta_{j}^2).
\end{split}\end{equation}
Since $x^o$ is regular, we have
\begin{equation*}\begin{split}
 &\int_{B_o(1)}\ip{v}{\gamma'}dH^n(v) =0,\\ &\int_{B_o(1)}\ip{v}{\gamma'}^2dH^n(v) =\frac{1}{n}\int_{B_o(1)}|v|^2dH^n(v)=\frac{\omega_{n-1}}{n(n+2)} \end{split}\end{equation*}
  and
  $$
 \int_{B_o(1)}\Big(|v|^2-\ip{v}{\gamma'}^2\Big)dH^n(v) =\frac{n-1}{n}\int_{B_o(1)}|v|^2dH^n(v)=\frac{(n-1)\omega_{n-1}}{n(n+2)},$$
where $\omega_{n-1}={\rm Vol}(\mathbb S^{n-1}).$\\
By equation \eqref{eq5.15}, and denoting $\xi_m=(\xi_m^\bot,\theta)\subset \Sigma_{x_m}$, the spherical suspension over $\Lambda_{x_m}$,
we have
\begin{equation*}\begin{split}
\int_{\Sigma_{x_m}}g_{x_m}(\xi_m^\bot)&\cdot\Big(|\xi_m|^2-\ip{\xi_m}{\gamma'}^2\Big)dH^{n-1}(\xi_m)\\ &= \int_{\Sigma_{x_m}} (1-\cos^2\theta)g_{x_m}(\xi_m^\bot)dH^{n-1}(\xi) \\
& = \int_0^\pi\int_{\Lambda_{x_{m}}}
 \sin^2\theta g_{x_m}(\xi_m^\bot)\cdot \sin^{n-2}\theta dH^{n-2}(\xi_m^\bot)d\theta\\
&= \int_0^\pi\sin^n\theta d\theta\int_{\Lambda_{x_{m}}} g_{x_m}(\xi_m^\bot)dH^{n-2}(\xi_m^\bot)\\
&\gs  \int_0^\pi\sin^n\theta
 d\theta\cdot \frac{-K-\epsilon_3}{n-1}\omega_{n-2}=-\frac{K+\epsilon_3}{n}\omega_{n-1}.
\end{split}\end{equation*}

Hence, we have
\begin{equation*}\begin{split}
 \int_{B_o(1)}& g_{x_m}(\xi_m^\bot)\cdot\Big(|v|^2-\ip{v}{\gamma'}^2\Big)dH^n(v)\\
 &= \int^1_0r^2\int_{\Sigma_{x_m}} g_{x_m}(\xi_m^\bot)\cdot\Big(|\xi_m|^2-\ip{\xi_m}{\gamma'}^2\Big)\cdot r^{n-1}dH^{n-1}(\xi_m)dr\\
 &=\frac{1}{n+2}\int_{\Sigma_{x_m}} g_{x_m}(\xi_m^\bot)\cdot\Big(|\xi_m|^2-\ip{\xi_m}{\gamma'}^2\Big) dH^{n-1}(\xi_m)\\
  &\gs  -\frac{K+\epsilon_3}{n(n+2)}\omega_{n-1}.
\end{split}\end{equation*}
Putting these   into \eqref{eq5.19}, and combining with $a_{m+1}-a_m=\frac{1-a}{N}$, we have
\begin{equation*}
\begin{split} \int_{B_o(1)}&\Big(|\exp_{x^o}(\delta_{j} a v),\ \exp_{y^o}(\delta_{j}  Tv)|^2-|x^oy^o|^2\Big)dH^n(v)
 \\ \ls  & (1-a)^2\frac{\omega_{n-1}}{n(n+2)}\cdot \delta^2_{j}\\&+\frac{(n-1)\omega_{n-1}}{n(n+2)}\cdot \delta^2_j\\ &\quad\cdot \sum_{m=0}^{N-1}\Big(N\cdot(a_{m}-a_{m+1})^2+\frac{\ell^2(K+\epsilon_3)}{3N(n-1)}\cdot \big(a_{m}^2+a_{m} \cdot a_{m+1}+a^2_{m+1}\big)
  \Big)\\& +o(\delta_{j}^2)
 \\ = & (1-a)^2\frac{\omega_{n-1}}{n(n+2)}\cdot \delta^2_{j}\\ &+\frac{(n-1)\omega_{n-1}}{n(n+2)}\cdot \delta^2_j\cdot \sum_{m=0}^{N-1}\Big( \frac{(a-1)^2}{N}+\frac{\ell^2(K+\epsilon_3)}{3N(n-1)}\cdot  \frac{a_{m+1}^3-a^3_{m}}{a_{m+1}-a_m}  \Big)\\& +o(\delta_{j}^2)\\
  = &\frac{\omega_{n-1}}{(n+2)}\cdot \delta^2_j\cdot \Big((1-a)^2+\frac{\ell^2(K+\epsilon_3)}{3n}\cdot \big(a^2+a +1\big) \Big)
   +o(\delta_{j}^2).
\end{split}
\end{equation*}

By set $\eta=v\delta_j$, we have
\begin{equation}\label{eq5.20}
\begin{split}
 \int_{B_o(\delta_j)}\Big(&|\exp_{x^o}(a \eta),\ \exp_{y^o}( T\eta)|^2-|x^oy^o|^2\Big)dH^n(\eta)\\
\ls &\frac{\omega_{n-1}}{(n+2)}\cdot \delta^{2+n}_j\cdot \Big((1-a)^2+\frac{\ell^2(K+\epsilon_3)}{3n}\cdot \big(a^2+a +1\big) \Big) +o(\delta_{j}^{n+2}).
\end{split}
\end{equation}
Since $x^o$ and $y^o$ are smooth, by \eqref{eq2.4} in Lemma \ref{lem2.1}, we have
$$H^n\big(B_o(\delta_j)\backslash\mathscr W\big) =o(\delta^{n+1}_j).$$
On the other hand, by triangle inequality, we have
\begin{equation*}\begin{split}
\big| |\exp_{x^o}(a \eta),&\ \exp_{y^o}( T\eta)|^2-|x^oy^o|^2\big|\\ &\ls  (|\exp_{x^o}(a \eta),\ \exp_{y^o}( T\eta)| +|x^oy^o| )\cdot (a|\eta|+|T\eta|)\\
&\ls C\delta_j
\end{split}
\end{equation*}
for all $\eta\in B_o(\delta_j)$.

Now the desired estimate \eqref{eq5.17} in   \textbf{Claim 1} follows from above two inequalities and equation \eqref{eq5.20}.\\

\noindent {\it Step 4. Integral version of maximum principle.}

 Let us recall that in Step 2, the point pair $(x^o,y^o)$ is a minimum of $H_3(x,y) $ on $B_p(R)\times U$.
Then we have \begin{equation}\label{eq5.21}\begin{split}
0&\ls\int_{ B_o(r)\cap \mathscr W}\Big(H_3\big(\exp_{x^o}(a\eta),\exp_{y^o}(T\eta)\big)-H_3(x^o,y^o)\Big)dH^{n}(\eta)\\
&=\int_{ B_o(r)\cap \mathscr W}\Big(v_3\big(\exp_{x^o}(a\eta)\big)-v_3(x^o)\Big)dH^{n}(\eta)\\&\qquad+\int_{ B_o(r)\cap \mathscr W}\Big(u_3\big(\exp_{y^o}(T\eta)\big)-u_3(y^o)\Big)dH^{n}(\eta)\\&\qquad+\int_{  B_o(r)\cap \mathscr W}\frac{|\exp_{x^o}(a\eta)\exp_{y^o}(T\eta)|^2-|x^oy^o|^2}{2t}dH^{n}(\eta)\\& :=I_1(r)+I_2(r)+I_3(r),\end{split}
\end{equation}
where $\mathscr W=\{v\in T_{x_0}\   |\ av\in \mathscr W_{x^o}\quad{\rm and }\quad T v\in \mathscr W_{y^o}\}.$

By the condition $Ric\gs-K$ and Laplacian comparison (see \cite{zz10} or \cite{ks07}), we have
$$\mathscr L_{|xx^o|^2}\ls c(n,K,{\rm diam}\Omega)\quad {\rm and }\quad \mathscr L_{|yy^o|^2}\ls c(n,K,{\rm diam}\Omega).$$

\noindent {\bf Claim 2:}\indent We have \begin{equation}\label{eq5.22} I_1(r)\ls\frac{-\epsilon_1+c\cdot\epsilon_2-w_{t,a,\theta}(x^o)}{2n(n+2)}\cdot a^2\cdot\omega_{n-1} r^{n+2}+o(r^{n+2})
\end{equation} and
\begin{equation} \label{eq5.23}
 I_2(r)\ls\frac{-\epsilon_1+c\cdot\epsilon_2+ f(y^o)}{2n(n+2)}\cdot\omega_{n-1} r^{n+2}+o(r^{n+2})\qquad
 \end{equation} for  all small $r>0$, where $c=c(n,K,{\rm diam}\Omega)$.

Let
$$\alpha(x)=v_3(x)+\frac{|xy^o|^2}{2t}\quad{\rm and}  \quad \beta=\frac{|xy^o|^2}{2t}.$$
Since  $x^o$ is a smooth point, by Lemma \ref{lem2.1}, we have
\begin{equation*}\begin{split}
\int_{B_o(r)\cap \mathscr W_{x^o}}&\Big(\alpha\big(\exp_{x^o}(a\eta)\big)-\alpha(x^o)\Big)dH^n(\eta)
\\ &= a^{-n}\cdot\int_{B_{x^o}(ar)}\big(\alpha(x)-\alpha(x^o)\big)(1+o(r))d\rv(x).
\end{split}\end{equation*}

 Note that  $\alpha(x)-\alpha(x^o)\gs0$ and
\begin{equation*}\begin{split}
\mathscr L_{v_3}&\ls \mathscr L_{v_2}\ls \big(-\epsilon_1+c(n,K,{\rm diam}\Omega)\cdot\epsilon_2\big)\cdot\rv+\mathscr L_{v_0}\\ &\ls (-w_{t,a,\theta}-\epsilon_1+c \cdot\epsilon_2)\cdot\rv,
\end{split}\end{equation*}
 $$\mathscr L_{\alpha-\alpha(x^o)} =\mathscr L_{v_3}+\mathscr L_\beta\ls \big(-w_{t,a,\theta}-\epsilon_1+c \cdot\epsilon_2+\Delta \beta\big)\cdot\rv.     $$
  Since $x^o$ is a Lebesgue point of $-w_{t,a,\theta}+\Delta \beta$, by Corollary \ref{cor4.5},
we get
\begin{equation*}
\begin{split}
\int_{\partial B_{x^o}(s)} &\big(\alpha (x)-\alpha(x^o)\big)d\rv(x)\\&  \ls \frac{-w_{t,a,\theta}(x^o)-\epsilon_1+c \cdot\epsilon_2+\Delta\beta(x^o)}{2n}\cdot s^2\cdot H^{n-1}\big(\partial B^k_o(s)\big) +o(r^{n+1})
\end{split}\end{equation*}
for all $0<s<ar$.
By combining with the fact that $x^o$ is regular, we have
\begin{equation*}\begin{split}
\int_{B_{x^o}(ar)}& \big(\alpha (x)-\alpha(x^o)\big)d\rv(x)\\&  \ls \frac{-w_{t,a,\theta}(x^o)-\epsilon_1
+c \cdot\epsilon_2+\Delta\beta(x^o)}{2n(n+2)}\cdot \omega_{n-1}\cdot(ar)^{n+2} +o(r^{n+2}).
\end{split}\end{equation*}
Therefore, we obtain (since $\alpha(x)-\alpha(x^o)\gs0$,)
\begin{equation}\label{eq5.24}
\begin{split}&\int_{B_o(r)\cap \mathscr W} \Big(\alpha\big(\exp_{x^o}(a\eta)\big)-\alpha(x^o)\Big)dH^{n}(\eta)\\& \qquad \ls
\int_{B_o(r)\cap \mathscr W_{x^o}}\Big(\alpha\big(\exp_{x^o}(a\eta)\big)-\alpha(x^o)\Big)dH^{n}(\eta)\\
& \qquad\ls \frac{-w_{t,a,\theta}(x^o)-\epsilon_1+c \cdot\epsilon_2+\Delta\beta(x^o)}{2n(n+2)}\cdot a^2\cdot\omega_{n-1} r^{2+n}+o(r^{2+n}).
\end{split}
\end{equation}

On the other hand,  since $\beta$ is Lipschitz (since it is semi-concave) and equation \eqref{eq2.4}
$$H^n\big(B_o(r)\backslash \mathscr W\big)=o(r^{n+1}),$$
 we have
 \begin{equation*}\begin{split}
 \int_{ B_o(r)\cap \mathscr W}& \Big(\frac{|\exp_{x^o}(a\eta)y^o|^2}{2t}-\frac{|x^oy^o|^2}{2t}\Big)dH^{n}(\eta)
 \\ &=\int_{B_o(r)\cap \mathscr W_{x^o} }\big(\beta(\exp_{x^o}(a\eta))-\beta(x^o)\big)dH^{n}(\eta)+o(r^{n+2}).
\end{split} \end{equation*}
Since $x^o\in Reg_\beta$, by applying equation \eqref{eq2.3} in Lemma \ref{lem2.1}, the Lipschitz continuity of $\beta$
  and Lemma \ref{lem2.3}, we get
\begin{equation*}\begin{split}\int_{B_o(r)\cap \mathscr W_{x^o} }&\big(\beta(\exp_{x^o}(a\eta))-\beta(x^o)\big)dH^{n}(\eta)\\ &=a^{-n}\int_{B_{x^o}(ar)}\big(\beta(x)-\beta(x^o)\big)d\rv+o(r^{n+2})\\
& =\frac{\Delta \beta(x^o)}{2n(n+2)}\cdot a^2\cdot\omega_{n-1} r^{n+2}+o(r^{n+2}).
\end{split}\end{equation*}
By combining above two equalities, we have
\begin{align}\label{eq5.25}\int_{B_o(r)\cap \mathscr W}\Big(&\frac{|\exp_{x^o}(a\eta)y^o|^2}{2t}-\frac{|x^oy^o|^2}{2t}\Big)dH^{n}(\eta)\\
\nonumber&=\frac{\Delta \beta(x^o)}{2n(n+2)}\cdot a^2\cdot\omega_{n-1}  r^{n+2}+o(r^{n+2}).\end{align}
Therefore, the desired estimate  (\ref{eq5.22}) follows from  equations \eqref{eq5.24},  \eqref{eq5.25}  and   $v_3=\alpha- \beta$.

The estimate for (\ref{eq5.23}) is similar.
Let
$$\widetilde{\alpha}(y)=u_3(y)+\frac{|x^oy|^2}{2t}\quad {\rm and} \quad\widetilde{\beta}=\frac{|x^oy|^2}{2t}.$$
  By a similar argument to (\ref{eq5.24}) and (\ref{eq5.25}), we have, for  all small $r>0$,
 \begin{align*}
 \int_{B_o(r)\cap   \mathscr W }\Big(\widetilde{\alpha}\big(\exp_{y^o}(T\eta)\big)&-\widetilde{\alpha}(y^o)\Big)dH^{n}(\eta)\\ &\ls \frac{f(y^o)-\epsilon_1+c\cdot\epsilon_2+\Delta\widetilde{\beta}(y^o)}{2n(n+2)}\cdot\omega_{n-1} r^{2+n}+o(r^{2+n})
 \end{align*}
and
\begin{align*}\int_{  B_o(r)\cap \mathscr W}&\Big(\frac{|\exp_{y^o}(T\eta)x^o|^2}{2t}-\frac{|x^oy^o|^2}{2t}\Big)dH^{n }(\eta)
=\frac{\Delta \widetilde{\beta}(y^o)}{2n(n+2)}\cdot\omega_{n-1}  r^{n+2}+o(r^{n+2}).\end{align*}
Thus the combination of these two estimates and $u_3(y)=\widetilde{\alpha}- \widetilde{\beta}$ implies (\ref{eq5.23}).
 The proof of  \textbf{Claim 2} is finished.

By combining  \eqref{eq5.21} and \textbf{Claim 1}  \eqref{eq5.17}, \textbf{Claim 2} \eqref{eq5.22}-- \eqref{eq5.23},   we have
\begin{equation*}\begin{split}&\Big[\frac{-\epsilon_1+c\cdot\epsilon_2}{2n}(a^2+1)-\frac{a^2\cdot w_{t,a,\theta}(x^o)}{2n}+\frac{f(y^o)}{2n}+\frac{(a-1)^2}{2t}\Big]\cdot \delta^{n+2}_j\\ &+\Big[\frac{(K+\epsilon_3)|x^oy^o|^2}{6nt}(1+a+a^2)\Big]\cdot \delta^{n+2}_j+o(\delta^{n+2}_j)\gs0
\end{split}\end{equation*}
for all $j\in \mathbb N.$ Thus,
\begin{equation*}\begin{split}
\frac{-\epsilon_1+c\cdot\epsilon_2}{2n}(a^2+1)&-\frac{a^2\cdot w_{t,a,\theta}(x^o)}{2n}+\frac{f(y^o)}{2n}+\frac{(a-1)^2}{2t}\\&+\frac{(K+\epsilon_3)|x^oy^o|^2}{6nt}(1+a+a^2)\gs0.
\end{split}\end{equation*}
Combining with the definition of function $w_{t,a,\theta}$, \eqref{eq5.14}, we have
\begin{align}
\label{eq5.26}0\ls & (a^2+1)\frac{-\epsilon_1+c\epsilon_2}{2n}+\frac{(a^2+a+1)}{6nt}\big((K+\epsilon_3)|x^oy^o|^2
-K|x^oF(x^o)|^2\big)
\\ \nonumber& -\frac{1}{2n}\big(\sup_{z\in B_{F_t(x^o)}(\theta)} f(z)-f(y_0)\big)-\frac{\theta}{2n}.
\end{align}

 In Step 2, we have known that $(\bar x,F_t(\bar x))$ is the \emph{unique} minimum point of $H(x,y)$.
 Because $H_3(x,y)$ converges to $H(x,y)$ as $\epsilon_1, \epsilon_2$ and $b_j, \ 1\ls j\ls 2n,$ tend to $0^+$,
  we know that $(x^o,y^o)$ converges to $(\bar x,F_t(\bar x))$, as $\epsilon_1, \epsilon_2$ and $b_j, \ 1\ls j\ls 2n,$ tend to $0^+$.

  On the other hand, because  $\bar x$ is regular and  $x^o$ converges to $\bar x$ as $\epsilon_1, \epsilon_2$ and $b_j, \ 1\ls j\ls 2n,$ tend to $0^+$,  functions
  $$u(y)+\frac{|x^oy|^2}{2t}$$
  converges to function
   $$u(y)+\frac{|\bar x y|^2}{2t}$$
   as $\epsilon_1, \epsilon_2$ and $b_j, \ 1\ls j\ls 2n,$ tend to $0^+$. $F_t(x^o)$ is a minimum of  $u(y)+ |x^oy|^2/(2t)$.
    $u_t$ is differentiable at $\bar x$ (see Step 2). So $F_t(\bar x)$ is the \emph{unique} minimum point of $u(y)+ |\bar xy|^2/(2t)$.
     Therefore,  $F_t(x^o)$ converges to $F_t(\bar x)$ as $\epsilon_1, \epsilon_2$ and $b_j, \ 1\ls j\ls 2n,$ tend to $0^+$.

    Hence, when we choose $\epsilon_1, \epsilon_2$ and $b_j, \ 1\ls j\ls 2n$ sufficiently small, we have that $|y^oF_t(x^o)|\ll\theta$.
     This implies
 $$y^o\in B_{F_t(x^o)}(\theta)\quad{\rm and }\quad \big||x^oy^o|-|x^oF_t(x^o)|\big|\ll\theta.$$
Now we can choose  $\epsilon_1$, $\epsilon_2$ and $\epsilon_3$ so small that
$$(a^2+1)\frac{-\epsilon_1+c\epsilon_2}{2n}+\frac{(a^2+a+1)}{6nt}\big((K+\epsilon_3)|x^oy^o|^2-K|x^oF(x^o)|^2\big)\ls\frac{\theta}{4n}$$
and $y\in B_{F_t(x^o)}(\theta)$.
This contradicts to \eqref{eq5.26}. Therefore  we  have completed the proof of the proposition.
\end{proof}

\begin{lem}\label{lem5.4}
Let $\Omega$ be a bounded open domain in an $n$-dimensional Alexandrov space. Assume that a  $W^{1,2}(\Omega)$-function
 $u$ satisfies
$\mathscr L_u\gs f\cdot\rv$ for some $f\in L^{\infty}(\Omega)$. Then, for any $\Omega'\Subset\Omega$, we have
$$\sup_{x\in \Omega'}u  \ls C \|u\|_{L^1(\Omega)}+C\|f\|_{L^\infty(\Omega)},$$
where the constant $C$ depending on lower bounds of curvature, $\Omega$, and $\Omega'$.
\end{lem}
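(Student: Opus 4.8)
The plan is to derive this local maximum principle for subsolutions by a De Giorgi--Nash--Moser iteration. The only external ingredient is that a bounded domain $\Omega$ in an Alexandrov space carries a doubling measure and supports a weak $(1,2)$-Poincar\'e inequality (see \cite{kms01,c99}), hence also a Sobolev inequality: there are $\kappa>1$ and a constant $C$, depending only on a lower curvature bound over $\overline\Omega$ and on ${\rm diam}(\Omega)$, such that for every ball $B\Subset\Omega$ of radius $\rho$ and every $g\in W^{1,2}_0(B)$,
$$\Big(\fint_B|g|^{2\kappa}d\rv\Big)^{\frac{1}{2\kappa}}\ls C\rho\Big(\fint_B|\nabla g|^2d\rv+\rho^{-2}\fint_B|g|^2d\rv\Big)^{\frac12}.$$
(Alternatively, one could simply invoke the local boundedness theorem for sub-quasiminimizers on metric measure spaces from \cite{kshan01}.)

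First I would absorb the inhomogeneity. The estimate is trivial when $\sup_{\Omega'}u\ls0$, so assume $\sup_{\Omega'}u>0$. Put $k:=\|f\|_{L^\infty(\Omega)}$ and $w:=u_+ +k\in W^{1,2}(\Omega)$; then $w\gs k$, $\nabla w=\chi_{\{u>0\}}\nabla u$ a.e., and the hypothesis $\mathscr L_u\gs f\cdot\rv$ --- extended from $Lip_0(\Omega)$ to compactly supported $W^{1,2}$ test functions by density --- yields
$$\int_\Omega\ip{\nabla w}{\nabla\phi}d\rv\ls k\int_\Omega\phi\, d\rv\qquad\text{for every nonnegative }\phi\in W^{1,2}_0(\Omega).$$

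Next, fix $\beta\gs1$, a cut-off $\eta\in Lip_0(B)$ with $B\Subset\Omega$, and the truncation $w_L:=\min\{w,L\}$ for $L\gs k$ (introduced to guarantee integrability, and removed at the end by letting $L\to\infty$ and using Fatou). Testing the previous inequality with $\phi=\eta^2w_L^{2\beta-1}\gs0$, using $k\,w_L^{2\beta-1}\ls w_L^{2\beta}$ to absorb the right-hand side and a weighted Cauchy--Schwarz inequality to absorb the cross term in $\ip{\nabla\eta}{\nabla w_L}$, one obtains the Caccioppoli estimate
$$\int_B\eta^2\big|\nabla(w_L^\beta)\big|^2d\rv\ls C\beta^2\int_B\big(|\nabla\eta|^2+\eta^2\big)w_L^{2\beta}d\rv.$$
Applying the Sobolev inequality to $g=\eta\, w_L^\beta$ turns this into a reverse-H\"older inequality between the $L^{2\kappa\beta}$ and $L^{2\beta}$ norms of $w_L$ on concentric balls; iterating it over radii decreasing to the scale of $\Omega'$ with exponents $\beta_i=\kappa^i$ --- the constants being polynomial in $\beta_i$, so that the resulting infinite product converges --- gives $\sup w_L\ls C\|w_L\|_{L^2}$ on slightly smaller balls. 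The standard exponent-lowering device (insert $\|w\|_{L^2}^2\ls(\sup w)\|w\|_{L^1}$ and absorb $\sup w$ along a chain of nested balls) improves this to $\sup_{\Omega'}w\ls C\|w\|_{L^1(\Omega)}$, after covering $\Omega'$ by finitely many balls of a fixed size. Since $\|w\|_{L^1(\Omega)}\ls\|u\|_{L^1(\Omega)}+\|f\|_{L^\infty(\Omega)}\cdot\rv(\Omega)$ and $u\ls w$, the asserted inequality follows, with $C$ now also depending on $\rv(\Omega)$ and on ${\rm dist}(\Omega',\partial\Omega)$.

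I do not expect a genuine obstacle here: once the Poincar\'e/Sobolev structure is available, the iteration is entirely standard, and none of the Alexandrov-specific tools of the earlier sections (regular points, Perelman's functions, the exponential map) is needed. The only points calling for a little care are the admissibility of the test functions (handled by the truncation $w_L$ together with the density argument above), keeping the dependence of every constant on $\beta$ polynomial so that the Moser product converges, and passing from a single-ball estimate to one on all of $\Omega'$.
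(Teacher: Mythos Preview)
Your proposal is correct, but it follows a different route from the paper's argument. You carry out the full De Giorgi--Nash--Moser iteration, absorbing the inhomogeneity $f$ algebraically via the shift $w=u_++\|f\|_{L^\infty}$ so that $k\,w_L^{2\beta-1}\ls w_L^{2\beta}$. The paper instead quotes the $f=0$ case from \cite{bm06} and then reduces the general case to it by a geometric device: choosing Perelman's $(-1)$-concave function $h$ from Lemma~\ref{lem3.3} (normalized so that $-1\ls h\ls0$) and observing that $\mathscr L_{u-\|f\|_{L^\infty}h}\gs(f+\|f\|_{L^\infty})\cdot\rv\gs0$, whence the $f=0$ bound applied to $u-\|f\|_{L^\infty}h$ on a finite cover of $\Omega'$ gives the result. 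Your approach has the advantage of being entirely metric-measure-theoretic---as you note, none of the Alexandrov-specific machinery is needed---and the paper itself remarks that ``the proof in \cite{bm06} works for general $f\in L^\infty(\Omega)$'', so your line is explicitly acknowledged as viable. The paper's shortcut buys brevity and illustrates a recurring trick in the article: using Perelman's concave function as a substitute for $|x|^2$ to shift Laplacians. One small point to tighten in your write-up is the passage from $\mathscr L_u\gs f\cdot\rv$ to the displayed inequality for $w=u_++k$; this amounts to the Kato-type fact that $u_+$ inherits the subsolution property, which the paper cites from \cite{bm06,km03} and which you could justify by testing with $\phi\cdot u_+/(u_++\epsilon)$ and letting $\epsilon\to0$.
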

\begin{proof}
If $f=0$ and $u\gs0$, this lemma has been shown in Theorem 8.2 of \cite{bm06} for any metric measure space supporting a doubling property and
 a weak Poincar\'e inequality.  According to volume comparison and Theorem 7.2 of \cite{kms01}, it holds for Alexandrov spaces.

  On the other hand, according to  Lemma 6.4 of \cite{bm06} (see also Lemma 3.10 of \cite{km03}), we know that $u_+$ is also a subsolution of $\mathscr L_u=0$, that is $\mathscr L_{u_+}\gs0$.

 Therefore, if $f=0$, we have
 $$\sup_{x\in \Omega'}u  \ls\sup_{x\in \Omega'}u_+\ls C \|u_+\|_{L^1(\Omega)} \ls C \|u\|_{L^1(\Omega)}.$$

   In fact, the proof in \cite{bm06} works for general $f\in L^\infty(\Omega)$. In the following, we give a simple argument
   for the general case on Alexandrov spaces.

For each $p\in \Omega$, we choose a  Perelman concave function $h$ defined on some neighborhood $B_p(r_p)$, which is given
 in Lemma \ref{lem3.3}, such that $ -1\ls  h \ls 0.$
Then we have
$$\mathscr L_{u-\|f\|_{L^\infty(\Omega)}h}\gs (f+\|f\|_{L^\infty(\Omega)})\cdot\rv\gs 0\quad {\rm on}\ B_p(r_p).$$
 Applying   the above estimate (in case $f=0$), we have
 \begin{equation*}\begin{split}
 \sup_{B_p(r_p/2)} u &\ls\sup_{B_p(r_p/2)}(u-\|f\|_{L^\infty(\Omega)}  h)\ls  C \|u-\|f\|_{L^\infty(\Omega)}h\|_{L^1(B_p(r_p))}\\
 &\ls  C \|u\|_{L^1(B_p(r_p))}+C\|f\|_{L^\infty(\Omega)}\cdot \rv(B_p(r_p)).
 \end{split}\end{equation*}
   Since $\overline{\Omega'}$ is compact, there is finite such balls $B_{p_i}(r_i)$ such that above estimate hold
    on each $B_{p_i}(r_i)$ and that $\Omega'\subset \cup_iB_{p_i}(r_i/2)$. Therefore, we have
   $$\sup_{\Omega'} u \ls  C \|u\|_{L^1(\Omega)}+C\|f\|_{L^\infty(\Omega)}\cdot \rv(\Omega).$$
   The proof of the lemma is finished.
\end{proof}

In \cite{petu96,petu03}, by using his key Lemma, Petrunin proved that any harmonic function on an Alexandrov space
 with nonnegative curvature  is locally Lipschitz continuous. Very recently,  this Lipschitz continuity result on compact Alexandrov spaces was also
  obtained by Gigli--Kuwada--Ohta  in \cite{gko10} via probability method. We can now establish the locally Lipschitz continuity
   for solutions of general Poisson equations.
\begin{cor}\label{cor5.5}
  Let $M$ be an $n$-dimensional Alexandrov space  and $\Omega$ be a bounded domain of $M$.
Assume that $u$ satisfies $\mathscr L_u=f\cdot\rv$  on $\Omega$ and $f\in Lip(\Omega)$. Then $u$ is locally Lipschitz continuous.
\end{cor}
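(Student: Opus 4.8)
The plan is to reduce the Lipschitz continuity of $u$ to the Hamilton--Jacobi/Bochner machinery of Proposition \ref{prop5.3} together with the maximum-principle-type estimate of Lemma \ref{lem5.4}. Fix $\Omega'\Subset\Omega$; it suffices to bound $\mathrm{Lip}\,u$ on $\Omega'$. Since $f\in Lip(\Omega)$, in particular $f\in L^\infty(\Omega'')$ for an intermediate set $\Omega'\Subset\Omega''\Subset\Omega$, and $f$ is continuous, hence trivially upper semi-continuous at every point; so both hypotheses of Proposition \ref{prop5.3} (applied to $u$, which satisfies $\mathscr L_u=f\cdot\rv\ls f\cdot\rv$) and of Lemma \ref{lem5.4} are available. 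The idea, following Petrunin, is to apply the estimate \eqref{eq5.12} to both $u$ and $-u$: since $\mathscr L_{-u}=-f\cdot\rv$ and $-f$ is also Lipschitz, we get a two-sided control on the Hamilton--Jacobi shifts $Q_t u$ and $Q_t(-u)$.

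First I would record that for $0<t<t_0$ the semi-concave function $u_t=Q_tu$ satisfies, by \eqref{eq5.12} with a judicious choice of $a$ (say $a=1$, or $a$ close to $1$), a bound of the form $\mathscr L_{u_t}\ls \big(f\circ F_t + C K t\,|\nabla u_t|^2\big)\cdot\rv$ on $\Omega'$, where $C=C(n)$; together with $|xF_t(x)|^2\ls 4t\|u\|_{L^\infty}$ this gives $\mathscr L_{u_t}\ls C'\cdot\rv$ on $\Omega'$ for a constant $C'$ depending only on $n,K,\|f\|_{L^\infty(\Omega'')},\|u\|_{L^\infty(\Omega'')}$, for all small $t$. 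The same applies to $v_t:=Q_t(-u)$. Now by Lemma \ref{lem5.1}(iii) one has the key one-sided differential inequality controlling the time-derivative of $u_t$: from \eqref{eq5.3}, $\partial_t u_t = -\tfrac12|\nabla u_t|^2$ a.e., while $u_0=u$. Combining the spatial bound $\mathscr L_{u_t}\ls C'\rv$ with a mean value inequality (Proposition \ref{prop4.13} / Corollary \ref{cor4.5}) at a point $p$ gives $\fint_{B_p(R)} u_t\,d\nu_{p,R}\ls u_t(p)+C'R^2$; subtracting the same for $-u$, i.e. using $u_t(p)-u(p)=-\tfrac12\int_0^t|\nabla u_s(p)|^2\,ds+o(t)$ and the analogous statement for $v_t$, one sees that $|\nabla u_t|^2$ is bounded in $L^1_{loc}$ uniformly as $t\to 0$. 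The remaining, and main, step is to upgrade this to an $L^\infty$ gradient bound: one applies Lemma \ref{lem5.4} to the function $|\nabla u_t|^2$ (or to a suitable subsolution built from $u_t+v_t$), using that $\mathscr L_{u_t+v_t}\ls 2C'\rv$ while $u_t+v_t\to 0$ in $L^1_{loc}$ — so $\sup_{\Omega'}(u_t+v_t)\ls C\|u_t+v_t\|_{L^1}+C\cdot 2C'$, which controls how far $u_t$ and $v_t$ can drop below $u$ and $-u$ respectively. Quantitatively, $0\ls u(x)-u_t(x)\ls \tfrac{t}{2}\mathbf{Lip}^2 u_t$ by \eqref{eq5.2}, and combining the upper bound on $u-u_t$ coming from the maximum principle argument with this lower bound pins down $\mathbf{Lip}\,u_t$ uniformly in $t$.

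Finally, having obtained $\mathbf{Lip}\,u_t \ls L$ on $\Omega'$ with $L$ independent of $t\in(0,t_0)$, I would let $t\to 0^+$: by Lemma \ref{lem5.1}(ii), $u_t\to u$ uniformly on $\Omega'$, and a uniform Lipschitz bound passes to the uniform limit, so $u$ is $L$-Lipschitz on $\Omega'$. Since $\Omega'\Subset\Omega$ was arbitrary, $u\in Lip_{loc}(\Omega)$.

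The main obstacle is the quantitative passage from the uniform $L^1_{loc}$ bound on $|\nabla u_t|^2$ to a uniform pointwise (Lipschitz) bound. The naive route — differentiating $|\nabla u_t|^2$ and invoking a Bochner inequality — is not directly available here because $u_t$ is only semi-concave, not $C^2$; this is exactly the difficulty Petrunin's key Lemma (Proposition \ref{prop5.3}) and Lemma \ref{lem5.4} are designed to circumvent. The delicate point is to set up the auxiliary subsolution (of the type $u_t + v_t$, perhaps after subtracting off $f$'s contribution via a Perelman-type function as in Lemma \ref{lem3.4} so that the right-hand side is genuinely $\ls 0$ up to a controlled constant) so that Lemma \ref{lem5.4}'s De Giorgi--Nash--Moser estimate applies with constants uniform in $t$, and then to track the constants through \eqref{eq5.2} and \eqref{eq5.12} carefully enough that letting $a\to 1$ and $t\to 0$ does not blow anything up. Once the bookkeeping of these constants is under control, the conclusion is immediate.
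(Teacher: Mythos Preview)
Your overall strategy --- apply Proposition \ref{prop5.3} to bound $\mathscr L_{u_t}$ from above, use Lemma \ref{lem5.4} to turn an $L^1$ control on $(u-u_t)/t$ into an $L^\infty$ one, and then read off Lipschitz continuity from $u(x)-u(y)\ls \frac{|xy|^2}{2t}+Ct$ --- is the same route the paper takes. But there is a genuine gap at what you correctly flag as the ``main step.''

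The obstruction is the quadratic gradient term in \eqref{eq5.12}. From $\mathscr L_{u_t}\ls(f\circ F_t+Kt|\nabla u_t|^2)\cdot\rv$ and $\mathscr L_u=f\cdot\rv$ you obtain
\[
\mathscr L_{(u-u_t)/t}\gs -\Big(\frac{f\circ F_t-f}{t}+K|\nabla u_t|^2\Big)\cdot\rv\gs -\big(\mathbf{Lip}f\cdot|\nabla u_t|+K|\nabla u_t|^2\big)\cdot\rv,
\]
and Lemma \ref{lem5.4} requires this right-hand side to lie in $L^\infty$ with a bound independent of $t$. A priori, however, you only know $|\nabla u_t|^2\ls 4\|u\|_{L^\infty}/t$, which blows up. Your alternative auxiliary $-(u_t+v_t)$ has exactly the same defect: from $\mathscr L_{-(u_t+v_t)}\gs -2C'\cdot\rv$ Lemma \ref{lem5.4} gives only $\sup\big(-(u_t+v_t)\big)\ls C\|u_t+v_t\|_{L^1}+2CC'=O(1)$, not the $O(t)$ bound you need. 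Subtracting a Perelman-type barrier (Lemma \ref{lem3.4}) cannot help either, because the bad term depends on the unknown gradient $|\nabla u_t|$, not on a fixed function of $x$. Note that even for $f\equiv 0$ this problem persists as soon as $K>0$.

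The device you are missing is an exponential substitution. The paper sets $\bar K=K+1$ and works with $\Phi_t=\big(e^{-\bar K t\,u_t}-1\big)/t$. The chain rule contributes an extra term $-\bar Kt|\nabla u_t|^2$ to the Laplacian computation, and since $\bar K>K$ this over-absorbs the $Kt|\nabla u_t|^2$ from \eqref{eq5.12}; the leftover $-t|\nabla u_t|^2$ then also swallows the $\mathbf{Lip}f\cdot t|\nabla u_t|$ coming from $f\circ F_t-f$ by completing the square. The outcome is $\mathscr L_{\Phi_t+\bar K u}\gs -Ct\cdot\rv$ with $C$ independent of $t$, which is precisely the input Lemma \ref{lem5.4} needs (together with an $L^1$ bound obtained via a Caccioppoli step on $\Phi_t$) to give $\sup_{\Omega_3}(\Phi_t+\bar K u)/t\ls C$. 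Since $\Phi_t\gs -\bar K u_t$, this yields $(u-u_t)/t\ls C$, and your final step goes through. Without this absorption trick, the bookkeeping you allude to cannot close.
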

\begin{proof}
Since $\Omega$ is bounded, we may assume that  $M$ has Ricci curvature $\gs-K$ on $\Omega$ with some $K\gs0$.

By applying Lemma \ref{lem5.4} to both $\mathscr L_u=f\cdot\rv$ and $\mathscr L_{-u}=-f\cdot\rv$, we can conclude that $u\in L^\infty(\Omega')$ for any $\Omega'\Subset \Omega$. Without loss of generality, we may assume
 $$-1\ls u\ls0 $$
 on $\Omega'$.
  Otherwise, we replace $u$ by $(u-\sup_{\Omega'} u)/(\sup_{\Omega'} u-\inf_{\Omega'} u)$.

 Fix any open subset $\Omega_1\Subset\Omega'$ and let $(u_t)_{0\ls t\ls \bar t}$ be its Hamilton--Jacobi semigroup defined on $\Omega_1$.
  By Lemma \ref{lem5.1}, we know
$$-1\ls u_t\ls 0$$ on $\Omega_1$, for all $0\ls t\ls \bar t.$

By Proposition \ref{prop5.3}, there is $t_0>0$ such that \eqref{eq5.12} holds for all $t\in (0,t_0)$ and all $a>0$.
  By putting $a=1$ in \eqref{eq5.12}, we have
$$\mathscr L_{u_t}\ls (f\circ F_t+ Kt|\nabla u_t|^2)\cdot\rv,\qquad \forall\ 0<t< t_0$$
 on $\Omega_1$.

Set
 $$\bar K=K+1\qquad {\rm and}\qquad \Phi_t(x)=\frac{\exp(-\bar Kt u_t)-1}{t}$$
for all $0<t< t_0 (\ls 1)$. Then we have
 $$0\ls \Phi_t \ls \bar Ke^{\bar K},\quad\ 1\ls \exp(-\bar Ktu_t)\ls e^{\bar K}$$
  and, for each $t\in(0,t_0)$,
  \begin{equation}\label{eq5.27}\begin{split}
  \mathscr L_{\Phi_t}&=-\bar K\exp(-\bar Ktu_t)\cdot(\mathscr L_{u_t}-\bar Kt|\nabla u_t|^2)\cdot\rv\\
  &\gs  -\bar K\exp(-\bar Ktu_t)\cdot(f\circ F_t+Kt|\nabla u_t|^2-\bar Kt|\nabla u_t|^2)\cdot\rv\\
  &\gs  -\bar K\exp(-\bar Ktu_t)\cdot \|f\|_{L^\infty(\Omega)} \cdot\rv\\
  &\gs  -C\cdot\rv
  \end{split}\end{equation}
 in sense of measure on $\Omega_1$. Here and in the following, $C$ will denote various positive constants that do not depend on $t$ (while they might depend on $K$, $t_0$,  $\Omega,\Omega_1, \Omega_2,  \Omega_3$, $\|f\|_{L^\infty(\Omega)}$ and  the Lipshitz constant  of $f$, ${\bf Lip}f$, on $\Omega$).

 By applying Caccioppoli inequality (see Proposition 7.1 of \cite{bm06},)  (or by choosing test function $\varphi \Phi_t$
 for some suitable cut-off $\varphi$ on $\Omega_1$), we have
  \begin{equation*} \|\nabla \Phi_t\|_{L^2(\Omega_2)}\ls C \|\Phi_t\|_{L^2(\Omega_1)}\ls C\end{equation*}
 for any open subset $\Omega_2\Subset\Omega_1.$

Noting that $-\bar Ku_t\gs0$ and
$$|\nabla \Phi_t|=\bar K\exp(-\bar Ktu_t)|\nabla u_t|\gs \bar K|\nabla u_t|,$$
we have
 \begin{equation}\label{eq5.28}\|\nabla  u_t\|_{L^2(\Omega_2)}\ls C.\end{equation}

 By  using inequalities $\exp(-\bar Ktu_t)\ls e^{\bar K}$ and  $|1-e^\gamma+\gamma\cdot e^\gamma|\ls C\cdot\gamma^2/2$
  for any $0\ls\gamma\ls \bar K  t_0.$  we have, for each $t\in (0,t_0)$ and $x\in \Omega_1$,
  \begin{equation}\begin{split}\label{eq5.29} \big| \Phi_{t+s}(x)-\Phi_t(x)\big|
  & \ls \Big|\frac{\exp\big(-\bar K(t+s)u_{t+s}\big)-1}{t+s} -\frac{\exp\big(-\bar Ktu_{t+s}\big)-1}{t} \Big|\\
  &\quad+ \Big|\frac{\exp\big(-\bar Ktu_{t+s}\big)-1}{t} -\frac{\exp\big(-\bar Ktu_{t}\big)-1}{t} \Big|  \\
  &\ls s\cdot \max_{t\ls t'\ls t+s}\Big|\frac{\exp(-t'\bar Ku_{t+s})(-\bar Ku_{t+s})t'-\exp(-t'\bar Ku_{t+s})+1}{(t')^2}\Big|\\
  &\quad+ \bar K|u_{t+s}-u_t|\cdot\max_{u_{t+s}\ls a\ls u_t}\exp(-\bar Kta)\\
  &\ls Cs+C |u_{t+s}-u_t|\end{split} \end{equation}
  for all $0<s< t_0-t.$

By applying Dominated convergence theorem,  \eqref{eq5.28}, \eqref{eq5.29} and Lemma \ref{lem5.1}(iii--iv),
  we have
\begin{equation*}\begin{split} \frac{\partial^+}{\partial t}\|\Phi_t\|_{L^1(\Omega_2)}&=\limsup_{s\to0^+}\int_{\Omega_2}\frac{\Phi_{t+s}(x)-\Phi_t(x)}{s}d\rv\\
&\ls
C\rv(\Omega_2)+C\limsup_{s\to0^+}\int_{\Omega_2}\frac{|u_{t+s}-u_t|}{s}d\rv\\
&= C\rv(\Omega_2)+\frac{C}{2}\int_{\Omega_2} |\nabla  u_t|^2  d\rv\ls C.
\end{split}\end{equation*}
This implies that
\begin{equation}\label{eq5.30}\|\Phi_t\|_{L^1(\Omega_2)}\ls \|\Phi_{t'}\|_{L^1(\Omega_2)}+C(t-t')\end{equation}
 for any $0<t'<t<t_0.$
Since  $0\ls\Phi_{t'}\ls \bar Ke^{\bar K}$ and $\lim_{t'\to0^+}\Phi_{t'}(x)=-\bar Ku(x)$, we have
$$\lim_{t'\to0^+}\|\Phi_{t'}\|_{L^1(\Omega_2)}=\int_{\Omega_2}(-\bar Ku)d\rv.$$
 By combining with  \eqref{eq5.30}, we have $$\int_{\Omega_2}\frac{\Phi_t+\bar Ku}{t}d\rv=\frac{1}{t}(\|\Phi_t\|_{L^1(\Omega'_1)}-\lim_{t'\to0^+}\|\Phi_{t'}\|_{L^1(\Omega'_1)})\ls C.$$

On the other hand,  for each $t\in (0,t_0)$,
since $f$ is Lipschitz and
$$|xF_t(x)|=t|\nabla u_t(x)|,$$
 for almost all $x\in \Omega_1$,  we have
 \begin{equation*}\begin{split}\mathscr L_{\Phi_t+\bar Ku}&= -\bar K\exp(-\bar Ktu_t)\big(\mathscr L_{u_t}-\bar Kt|\nabla u_t|^2\big)\cdot\rv
 +\bar Kf \cdot\rv\\
& = -\bar K\exp(-\bar Ktu_t)\cdot\big(\mathscr L_{u_t}-\bar Kt|\nabla u_t|^2-f\big)\cdot\rv\\
& \quad -\bar K f\cdot\big(\exp(-\bar Ktu_t) -1\big)\cdot\rv\\
&\gs -\bar K\exp(-\bar Ktu_t)\cdot\big(f\circ F_t+Kt|\nabla u_t|^2-\bar Kt|\nabla u_t|^2-f\big)\cdot\rv\\
& \quad -\bar K f\cdot\big(\exp(-\bar Ktu_t) -1\big)\cdot\rv\\
& \gs-\bar K\exp(-\bar Ktu_t)\cdot( {\bf Lip}f \cdot|xF_t(x)| -t|\nabla u_t|^2)\cdot\rv-Ct\cdot\|f\|_{L^\infty(\Omega)}\cdot\rv\\
& = -\bar K\exp(-\bar Ktu_t)\cdot t\cdot({\bf Lip}f\cdot|\nabla u_t| - |\nabla u_t|^2)\cdot\rv-Ct\cdot\|f\|_{L^\infty(\Omega)}\cdot\rv\\
& \gs -Ct\cdot\Big(\frac{{\bf Lip}^2f}{4}+\|f\|_{L^\infty(\Omega)}\Big)\cdot\rv\\
&\gs -Ct\cdot\rv
 \end{split}\end{equation*}
in sense of measure on $\Omega_2$. Note that $\Phi_t+\bar Ku\gs -\bar Ku_t+  \bar Ku\gs0 $ (since Lemma \ref{lem5.1}(i)).
According to Lemma \ref{lem5.4}, we get
$$\max_{\Omega_3}\Big|\frac{\Phi_t+\bar Ku}{t}\Big|\ls C \|\frac{\Phi_t+\bar Ku}{t}\|_{L^1(\Omega_2)}  +C   =  C \int_{\Omega_2} \frac{\Phi_t+\bar Ku}{t}d\rv  +C \ls C$$
for any open subset $\Omega_3\Subset\Omega_2$. Hence, we have (since $\Phi_t\gs -\bar Ku_t$)
 $$\frac{-u_t+u}{t}\ls \bar K^{-1}\frac{\Phi_t+\bar Ku}{t}\ls C $$
 on $\Omega_3$, for each $t\in(0,t_0)$.

Therefore, by the definition of $u_t$, we obtain
$$u(x)\ls u_t(x)+Ct\ls  u(y)+\frac{|xy|^2}{2t}+ Ct$$
 for all $x,y\in \Omega_3$ and $t\in(0, t_0)$.
Now fix $x$ and $y$ in $\Omega_3$ such that  $|xy|<  t_0$. By choosing $t=|xy|$, we get $$u(x)-u(y)\ls C|xy|.$$
 Hence, by replacing $x$ and $y$, we have $$|u(x)-u(y)|\ls C|xy|, \quad {\rm for\ all}\ |xy|<  t_0.$$
This implies that $u$ is Lipschitz continuous on $\Omega_3$.

 By the arbitrariness of $\Omega_3\Subset\Omega_2\Subset\Omega_1\Subset \Omega'\Subset \Omega$, we get that $u$ is locally Lipschitz continuous on $\Omega$, and complete the proof.
\end{proof}

\subsection{Bochner's type formula}
 Bochner  formula is one of  important tools in differential geometry.  In this subsection, we will extend it
  to Alexandrov space with Ricci curvature  bounded below.

\begin{lem}\label{lem5.6}
Let $u\in Lip(\Omega)$  with Lipschitz constant $ \mathbf{Lip}u$, and let $u_t$ is its \emph{Hamilton--Jacobi}
 semigroup defined on $\Omega'\Subset\Omega$, for $0\ls t<\bar t$. Then we have the following  properties:\\
 \indent (i)\indent For any $t>0$, we have
 \begin{equation}\label{eq5.31}
 |\nabla^- u|(F_t(x))\ls |\nabla u_t(x)|\ls {\rm Lip}u(F_t(x))
 \end{equation}
for almost all $x\in \Omega'$, where $F_t$ is defined in  \eqref{eq5.10}.

 In particular, the Lipschitz constant of $u_t$, $\mathbf{Lip}u_t\ls  \mathbf{Lip}u$;\\
\indent (ii)\indent For   almost all $x\in \Omega'$, we have
 \begin{equation}\label{eq5.32}
 \lim_{t\to0^+}\frac{u_t(x)-u(x)}{t}=-\frac{1}{2}|\nabla u(x)|^2.\end{equation}
 Furthermore, for each sequence $t_j$ converging to $0^+$, we have
 $$\lim_{t_j\to0^+}\nabla u_{t_j}(x)=\nabla u(x)$$
 for almost all $x\in \Omega'$.
\end{lem}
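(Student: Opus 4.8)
The plan is to prove part (i) first, then deduce part (ii) from it together with the already-established Lemma 5.1 and Lemma 5.2. For part (i), fix $t>0$ and work at a point $x\in\Omega'$ where $u_t$ is differentiable (a full-measure set, by Rademacher). Let $y=F_t(x)$, so by Lemma 5.2 the point $y$ is unique and $|xy|\cdot\uparrow_x^y=-t\nabla u_t(x)$, hence $|xy|=t|\nabla u_t(x)|$. For the right-hand inequality in \eqref{eq5.31}, I would take any geodesic $\sigma(s)$ issuing from $y$ toward a nearby point $z$, transport it: by the triangle inequality $|x\sigma(s)|\le |xy|+s$ is too crude, so instead I would use that $u_t(x)\le u(z)+|xz|^2/(2t)$ while $u_t(x)=u(y)+|xy|^2/(2t)$, giving $u(y)-u(z)\le (|xz|^2-|xy|^2)/(2t)$; estimating $|xz|\le |xy|+|yz|$ and letting $z\to y$ yields $\limsup_{z\to y}(u(y)-u(z))/|yz|\le |xy|/t=|\nabla u_t(x)|$, i.e. $|\nabla^-u|(y)\le|\nabla u_t(x)|$. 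Wait — this is the \emph{left} inequality; so the left inequality is the easy one. For the right inequality $|\nabla u_t(x)|\le\mathrm{Lip}\,u(y)$, I would instead argue that $u(z)+|xz|^2/(2t)\ge u_t(x)=u(y)+|xy|^2/(2t)$ for all $z$, so $u(z)-u(y)\ge(|xy|^2-|xz|^2)/(2t)$; choosing $z$ on the geodesic from $y$ in the direction of $\uparrow_y^x$ with $|yz|=\epsilon$ we get $|xz|\le|xy|-\epsilon+o(\epsilon)$ by the first variation formula, whence $u(z)-u(y)\ge \epsilon|xy|/t+o(\epsilon)$, and this forces $\mathrm{Lip}\,u(y)\ge|xy|/t=|\nabla u_t(x)|$. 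Combining with Proposition \ref{diff} (which identifies $|\nabla^-u|$, $\mathrm{Lip}\,u$ and $|\nabla u|$ at points of differentiability, hence a.e.) gives \eqref{eq5.31}. The bound $\mathbf{Lip}\,u_t\le\mathbf{Lip}\,u$ then follows since $|\nabla u_t(x)|\le\mathrm{Lip}\,u(F_t(x))\le\mathbf{Lip}\,u$ a.e., and $u_t$ is locally Lipschitz with $\mathrm{Lip}\,u_t=|\nabla u_t|$ a.e.\ by Lott--Villani's identity.

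For part (ii), the first assertion \eqref{eq5.32} is exactly Lemma \ref{lem5.1}(iv) combined with Lemma \ref{lem5.2}: at a regular point $x$ where $u_t$ is differentiable for all small $t$, $|xF_t(x)|=t|\nabla u_t(x)|$, and as $t\to0^+$ the minimizer $F_t(x)\to x$; the difference quotient $(u_t(x)-u(x))/t$ is monotone and bounded by Lemma \ref{lem5.1}(iii), and its limit is $-\tfrac12|\nabla u_t(x)|^2$ evaluated in the limit $t\to0$, which I must show equals $-\tfrac12|\nabla u(x)|^2$. Here one uses that $u(F_t(x))\to u(x)$ and $u(F_t(x))=u_t(x)-t|\nabla u_t(x)|^2/2$, so $t|\nabla u_t(x)|^2/2 = u_t(x)-u(F_t(x))\to 0$; more quantitatively, $u(x)-u_t(x)\ge u(x)-u(F_t(x))-|xF_t(x)|^2/(2t)$ and differentiability of $u$ at $x$ gives $u(x)-u(F_t(x)) = -\langle\nabla u(x),\uparrow_x^{F_t(x)}\rangle|xF_t(x)|+o(|xF_t(x)|) \le |\nabla u(x)|\cdot|xF_t(x)|+o(|xF_t(x)|)$, so $(u(x)-u_t(x))/t \le |\nabla u(x)|\,|\nabla u_t(x)| - |\nabla u_t(x)|^2/2 + o(|\nabla u_t(x)|)$, forcing $\limsup_t|\nabla u_t(x)|\le 2|\nabla u(x)|$ and then, by optimizing, $\lim_t (u_t(x)-u(x))/t = -\tfrac12|\nabla u(x)|^2$ together with $\lim_t|\nabla u_t(x)|=|\nabla u(x)|$.

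For the final claim, that $\nabla u_{t_j}(x)\to\nabla u(x)$ a.e.\ along any sequence $t_j\to0^+$, I would fix a point $x$ that is regular, a point of differentiability of $u$, a point of differentiability of $u_{t_j}$ for every $j$, and at which \eqref{eq5.32} holds (the intersection of these full-measure sets is full-measure). From \eqref{eq5.5}, $\nabla u_{t_j}(x) = -|xF_{t_j}(x)|\cdot\uparrow_x^{F_{t_j}(x)}/t_j$, and we already know $|\nabla u_{t_j}(x)|=|xF_{t_j}(x)|/t_j$ is bounded (by $\mathbf{Lip}\,u$) with $\limsup$ at most $|\nabla u(x)|$; conversely, testing $u$ against the geodesic from $x$ in the direction $-\nabla u(x)/|\nabla u(x)|$ and using $u_{t_j}(x)\le u(\gamma(s))+s^2/(2t_j)$ shows $\liminf_j|\nabla u_{t_j}(x)|\ge|\nabla u(x)|$, so $|\nabla u_{t_j}(x)|\to|\nabla u(x)|$. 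It remains to pin down the direction: passing to a subsequence so that $\uparrow_x^{F_{t_j}(x)}\to\xi\in\Sigma_x$, one gets a limiting vector $-|\nabla u(x)|\xi$, and the minimizing property of $F_{t_j}$ together with differentiability of $u$ at $x$ forces $\xi$ to realize $\max_{\eta\in\Sigma_x}\langle\nabla u(x),\eta\rangle$, i.e.\ $\xi=-\nabla u(x)/|\nabla u(x)|$ when $\nabla u(x)\ne0$; since $\Sigma_x\cong\mathbb S^{n-1}$ this maximizer is unique, so the limit is independent of the subsequence and equals $\nabla u(x)$. When $\nabla u(x)=0$ the convergence $|\nabla u_{t_j}(x)|\to0$ already gives $\nabla u_{t_j}(x)\to 0=\nabla u(x)$. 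I expect the main obstacle to be the careful passage from the scalar convergence $|\nabla u_{t_j}(x)|\to|\nabla u(x)|$ to the vector convergence, i.e.\ controlling the directions $\uparrow_x^{F_{t_j}(x)}$ and invoking uniqueness of the maximizer on $\Sigma_x$ — this is where regularity of $x$ (so that $\Sigma_x$ is a round sphere and $d_xu$ is genuinely linear) is essential.
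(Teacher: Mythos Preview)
Your proposal is essentially correct, but both parts take routes that differ from the paper's, and in part~(ii) the paper's argument is substantially cleaner.

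\medskip
\noindent\textbf{Part (i).} Your approach is valid and in fact more direct than the paper's. The paper proves the left inequality by choosing a sequence $y_j\to y=F_t(x)$ realizing $|\nabla^-u|(y)$, then constructing companion points $x_j$ on the geodesic $xy$ with $|xx_j|=|yy_j|$, and comparing via $u_t(x_j)\ls u(y_j)+|x_jy_j|^2/(2t)$ together with the triangle inequality $|x_jy_j|\ls|xy|$. For the right inequality the paper instead picks $x_j\to x$ realizing $|\nabla u_t(x)|$ and companion points $y_j$ on the geodesic $xy$. Your arguments bypass these paired-sequence constructions by working directly with the defining inequalities of $u_t$ and the crude triangle bound $|xz|\ls|xy|+|yz|$ (left inequality) or the exact identity $|xz|=|xy|-\epsilon$ for $z$ on the segment $yx$ (right inequality). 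Both approaches work; yours is shorter.

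\medskip
\noindent\textbf{Part (ii).} Two comments. First, your opening claim that ``\eqref{eq5.32} is exactly Lemma~\ref{lem5.1}(iv)'' is not accurate: Lemma~\ref{lem5.1}(iv) concerns the forward derivative at $t>0$, not at $t=0$. The paper simply cites Lott--Villani for \eqref{eq5.32}; your subsequent direct argument is sound once written out carefully, but the citation suffices.

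Second, for the vector convergence $\nabla u_{t_j}(x)\to\nabla u(x)$, your plan---scalar convergence $|\nabla u_{t_j}(x)|\to|\nabla u(x)|$ plus a compactness-of-directions argument on $\Sigma_x$---is correct but circuitous (and note a sign slip: the limit direction $\xi$ \emph{minimizes} $\ip{\nabla u(x)}{\cdot}$, not maximizes it). The paper's route is a one-line polarization identity: from $u_{t_j}(x)=u(y_{t_j})+\tfrac{t_j}{2}|\nabla u_{t_j}(x)|^2$ and the Taylor expansion $u(y_{t_j})=u(x)-t_j\ip{\nabla u(x)}{\nabla u_{t_j}(x)}+o(t_j)$ (using \eqref{eq5.5}), one combines with \eqref{eq5.32} to get
\[
\lim_{j\to\infty}\Big(|\nabla u(x)|^2-2\ip{\nabla u(x)}{\nabla u_{t_j}(x)}+|\nabla u_{t_j}(x)|^2\Big)=0,
\]
which is exactly $|\nabla u(x)-\nabla u_{t_j}(x)|^2\to0$ in the Euclidean tangent space $T_x$. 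This simultaneously gives the scalar and the direction convergence, with no need to pass to subsequences or invoke uniqueness of extremizers on the sphere.
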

\begin{proof}
(i) Lipschitz function $u_t$ is differentiable at almost all point $x\in \Omega'$. For such a point $x$, we firstly prove
 $|\nabla^- u|(F_t(x))\ls |\nabla u_t(x)|.$

Assume $|\nabla^- u|(F_t(x))>0$. (If not, we are done.) This implies $y:=F_t(x)\not=x$. Indeed, if $F_t(x)=x$, we have
$$u(x)\ls u(z)+\frac{|xz|^2}{2t}$$
for all $z\in \Omega'$. Hence  $\big(u(x)-u(z)\big)_+\ls|xz|^2/(2t)$. This concludes  $|\nabla^- u|(F_t(x))=0.$

Take a sequence of points $y_j$ converging to $y$ such that
$$\lim_{y_j\to y} \frac{u (y)-u (y_j)}{|yy_j|} =|\nabla^-u|(y).$$
Let $x_j$ be points in geodesic $xy$ such that $|xx_j|=|yy_j|$.
 By \begin{equation*}u_t(x_j)\ls u(y_j)+\frac{|x_jy_j|^2}{2t}\quad {\rm and}\quad u_t(x)=u(y)+\frac{|xy|^2}{2t},
 \end{equation*}
we have
\begin{equation}\label{eq5.33}u_t(x_j)-u_t(x)\ls u(y_j)-u(y)+\frac{1}{2t}(|x_jy_j|^2-|xy|^2).\end{equation}
Since $u_t$ is differentiable at $x$,
$$u_t(x_j)-u_t(x)=|xx_j|\cdot\ip{\nabla u_t(x)}{\uparrow^{x_j}_x}+o(|xx_j|).$$
Triangle inequality implies
$$|x_jy_j|\ls |x_jy|+|yy_j|=|x_jy|+|xx_j|=|xy|.$$
Therefore, by combining with \eqref{eq5.33}, we have
\begin{equation*}
\begin{split}u(y)-u(y_j)&\ls -|xx_j|\cdot\ip{\nabla u_t(x)}{\uparrow^{x_j}_x}+o(|xx_j|)\ls |xx_j|\cdot|\nabla u_t(x)|+o(|xx_j|)\\
&=|yy_j|\cdot|\nabla u_t(x)|+o(|xx_j|).
\end{split}\end{equation*}
Letting $y_j\to y$, this implies $|\nabla^-u|(y)\ls|\nabla u_t(x)|.$

Now let us prove $ |\nabla u_t(x)|\ls {\rm Lip}u(F_t(x))$ at a point $x$, where $u$ is differentiable.
Assume $|\nabla u_t(x)|>0$. (If not, we are done.) This implies $y:=F_t(x)\not=x$. Indeed, If $y=x$, we have
$$u_t(z)\ls u(x)+\frac{|xz|^2}{2t}= u_t(x)+\frac{|xz|^2}{2t},\quad \forall\ z\in\Omega'.$$
On the other hand, $u_t$ is differentiable at $x$,
$$u_t(z)=u_t(x)+\ip{\nabla u_t(x)}{\uparrow^z_x}\cdot|xz|+o(|xz|).$$
Hence, we obtain
$$\ip{\nabla u_t(x)}{\uparrow^z_x}\ls |xz|/(2t)+o(1)$$
for all $z$ near $x$. Hence $|\nabla u_t(x)|=0$.

Let the sequence $x_j\in \Omega'$ converge to $x$ and
\begin{equation}\label{eq5.34}\lim_{x_j\to x}\ip{\nabla u_t(x)}{\uparrow^{x_j}_x}= |\nabla u_t(x)|.\end{equation}
 Take $y_j$ be   points  in geodesic $xy$ with $|yy_j|=|xx_j|$.
By triangle inequality, we have
 $$|x_jy_j|\ls|xx_j|+|xy_j|=|yy_j|+|xy_j|= |xy|.$$
 Combining with
 $$u_t(x_j)\ls u(y_j)+\frac{|x_jy_j|^2}{2t}\quad{\rm and}\quad u_t(x)=u(y)+\frac{|xy|^2}{2t},$$
 we have
  \begin{equation}
  \label{eq5.35}u_t(x_j)-u_t(x)\ls u(y_j)-u(y)\ls |u(y_j)-u(y)|.
   \end{equation}
Since $u_t$ is differentiable at $x$,
$$u_t(x_j)-u_t(x)=\ip{\nabla u_t(x)}{\uparrow^{x_j}_x}\cdot |xx_j|+o(|xx_j|).$$
Hence, by combining with \eqref{eq5.34}, \eqref{eq5.35} and $|x_jx|=|y_jy|$, we get
$$|\nabla u_t(x)|\ls \limsup_{y_j\to y} \frac{|u(y_j)-u(y)|}{|yy_j|}\ls {\rm Lip}u(y).$$
The assertion (i) is proved.\\

\noindent(ii) The equation (\ref{eq5.32}) was proved by Lott--Villani in \cite{lv07-hj} (see also Theorem 30.30 in \cite{v09}).

Now let us prove the second assertion. The functions $u$ and $u_{t_j}$ are  Lipschitz on $\Omega'$.
 Then they are differentiable at almost all point $x\in \Omega'$. For such a point $x$,
according to (\ref{eq5.5}) in Lemma \ref{lem5.2}, we have, for each $t_j$,
$$u_{t_j}(x)= u (y_{t_j}) + \frac{|xy_{t_j}|^2}{2t_j} =u (y_{t_j}) +t_j\cdot\frac{|\nabla u_{t_j}(x)|^2}{2},$$
 where $y_{t_j}$ is the (unique) point such that \eqref{eq5.4} holds,  and
 $$u (y_{t_j})  =u(x)+|xy_{t_j}|\ip{\nabla u(x)}{\uparrow^{y_{t_j}}_x} +o(t_j)= u(x)-t_j\ip{\nabla u(x)}{\nabla u_{t_j}(x)} +o(t_j).$$
The combination of above two equation and (\ref{eq5.32}) implies that
$$\lim_{t_j\to0^+}\Big(-\ip{\nabla u(x)}{\nabla u_{t_j}(x)}+\frac{|\nabla u_{t_j}(x)|^2}{2}\Big)=-\frac{|\nabla u(x)|^2}{2}.$$
This is
$$\lim_{t_j\to0^+}\Big(|\nabla u(x)|^2-2\ip{\nabla u(x)}{\nabla u_{t_j}(x)}+ |\nabla u_{t_j}(x)|^2 \Big)=0,$$
which implies
$$\lim_{t_j\to0^+}\nabla u_{t_j}(x)=\nabla u(x).$$
 Now the proof of this lemma is completed.
\end{proof}

Now we have the following Bochner type formula.
\begin{thm}[Bochner type formula]\label{bochner}Let $M$ be an $n$-dimensional Alexandrov space with Ricci curvature bounded from below by $-K$
and $\Omega$ be a bounded domain in $M$.  Let  $f(x,s):\Omega\times[0,+\infty)\to \R$ be a Lipschitz function  and satisfy the following:\\
\indent $(a)$\indent  there exists a zero measure set $\mathcal N\subset \Omega$ such that for all $s\gs0$, the functions $f(\cdot,s)$ are differentiable at any $x\in \Omega\backslash \mathcal N;$\\
\indent $(b)$\indent the function $f(x,\cdot)$ is of class $C^1$ for  all $x\in \Omega$  and the function $\frac{\partial f}{\partial s}(x,s)$ is continuous, non-positive on $\Omega\times [0,+\infty)$.

Suppose that $u\in Lip(\Omega)$  and
  $$\mathscr L_u=f(x,|\nabla u|^2)\cdot\rv.$$
Then we have $|\nabla u|^2\in W^{1,2}_{loc}(\Omega) $ and
\begin{equation}\label{eq5.36}
\mathscr L_{|\nabla  u |^2} \gs  2\Big(\frac{f^2(x,|\nabla u|^2)}{n}+\ip{\nabla u}{\nabla  f(x,|\nabla u|^2)}-K|\nabla u|^2\Big)\cdot\rv
\end{equation}
in sense of measure on $\Omega$, provided  $|\nabla u|$ is lower semi-continuous at almost all $x\in \Omega$, namely, there exists a representative of $|\nabla u|$ which is lower semi-continuous at  almost all $x\in \Omega$.
\end{thm}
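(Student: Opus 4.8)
The plan is to deduce the Bochner type formula from Proposition~\ref{prop5.3} by the Hamilton--Jacobi regularization of $u$, together with the mean value / comparison machinery of Section~4. Write $Q(x)=|\nabla u|^2$ and, for $t>0$ small, let $u_t=Q_tu$ be the Hamilton--Jacobi semigroup on a fixed $\Omega'\Subset\Omega$ and $F_t$ the associated ``endpoint'' map defined in \eqref{eq5.10}. By Lemma~\ref{lem5.6}(i) we have $|\nabla u_t(x)|\to|\nabla u(x)|$ a.e.\ and, more precisely, $\nabla u_{t_j}(x)\to\nabla u(x)$ a.e.\ along any sequence $t_j\to0^+$; also $|xF_t(x)|=t|\nabla u_t(x)|\to0$ uniformly, so $F_t\to\mathrm{id}$ in a strong enough sense. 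Since $\mathscr L_u=f(x,|\nabla u|^2)\cdot\rv$ with $f$ (in the $x$-variable, for each fixed $s$) upper semi-continuous a.e.\ — here one uses that $f$ is Lipschitz, hence continuous, so $f(\cdot,|\nabla u|^2(\cdot))$ is upper semi-continuous wherever $|\nabla u|^2$ is lower semi-continuous and $\partial_s f\le 0$ — Proposition~\ref{prop5.3} applies and gives, for $0<t<t_0$ and every $a>0$,
\begin{equation*}
a^2\cdot \mathscr L_{u_t}\ls \Big[f\big(F_t(x),|\nabla u(F_t(x))|^2\big)+\frac{n(a-1)^2}{t}+\frac{Kt}{3}(a^2+a+1)|\nabla u_t|^2\Big]\cdot\rv
\end{equation*}
on $\Omega'$. (Strictly, Proposition~\ref{prop5.3} is stated for $f\in L^\infty$; here $f$ is evaluated along the graph, so one first checks $x\mapsto f(x,|\nabla u(x)|^2)\in L^\infty(\Omega)$ and that it is upper semi-continuous a.e., which is exactly the role of the lower semi-continuity hypothesis on $|\nabla u|$.)

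Next I would linearize in $a$ around $a=1$. Write $a=1+\varepsilon$ and optimize: the term $\tfrac{n(a-1)^2}{t}$ is $\tfrac{n\varepsilon^2}{t}$, while on the left $a^2=1+2\varepsilon+\varepsilon^2$. Testing the above against a fixed nonnegative $\varphi\in Lip_0(\Omega')$, subtracting the $a=1$ inequality, dividing by $\varepsilon$ and letting $\varepsilon\to0^\pm$ (using both signs of $\varepsilon$, which is legitimate since $a>0$ for $|\varepsilon|$ small) yields the ``first variation in $a$'' inequality
\begin{equation*}
2\int_{\Omega'}\varphi\, d\mathscr L_{u_t}\ls \int_{\Omega'}\varphi\Big[-\tfrac{2}{t}f\big(F_t(x),|\nabla u(F_t(x))|^2\big)-\tfrac{2Kt}{3}\!\cdot\!3\!\cdot\!|\nabla u_t|^2+\cdots\Big]\,\rv + (\text{$o(1)$ as extracted}),
\end{equation*}
and more to the point, combining the two one-sided limits forces the coefficient of $\varepsilon$ to vanish ``from one side'', giving the clean inequality
\begin{equation*}
\mathscr L_{u_t}+\frac1t f\big(F_t(\cdot),|\nabla u(F_t(\cdot))|^2\big)\cdot\rv\gs -\frac{Kt}{3}\cdot 3\,|\nabla u_t|^2\cdot\rv
\end{equation*}
in the distributional sense — i.e.\ $t\cdot\mathscr L_{u_t}\gs -f(F_t(\cdot),\cdot)\cdot\rv - Kt^2|\nabla u_t|^2\cdot\rv$. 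The quadratic-in-$a$ bookkeeping is routine; the content is that minimizing over $a$ isolates the term $\tfrac1t f\circ F_t$, and this is precisely the mechanism that, after dividing by $t$ and letting $t\to0$, will produce the $\tfrac{f^2}{n}$ term and the $\langle\nabla u,\nabla f\rangle$ term.

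The crucial passage is then $t\to0^+$. One writes, using $\mathscr L_u=f(x,|\nabla u|^2)\cdot\rv$,
\begin{equation*}
\frac{\mathscr L_{u_t}-\mathscr L_u}{t}\ =\ \mathscr L_{\frac{u_t-u}{t}}\ \longrightarrow\ \mathscr L_{-\frac12|\nabla u|^2}
\end{equation*}
in the sense of distributions, because $\tfrac{u_t-u}{t}\to-\tfrac12|\nabla u|^2$ in $L^1_{loc}$ by Lemma~\ref{lem5.6}(ii) and Lemma~\ref{lem5.1}, and $\mathscr L$ is continuous on distributions. On the other hand $\tfrac1t\big(f(F_t(x),|\nabla u(F_t(x))|^2)-f(x,|\nabla u(x)|^2)\big)$: write $F_t(x)=\exp_x\!\big(-t\nabla u_t(x)\big)$, so that by the differentiability \eqref{eq2.16} of the Lipschitz function $x\mapsto f(x,|\nabla u(x)|^2)$ (which is differentiable a.e.\ by Rademacher, and we must arrange that $x\mapsto f(x,|\nabla u(x)|^2)$ inherits enough regularity — this uses (a), that $f(\cdot,s)$ is differentiable off $\mathcal N$, plus (b) to control the $s$-dependence and $\nabla u_t\to\nabla u$) this difference quotient tends to $-\langle\nabla u(x),\nabla\big(f(\cdot,|\nabla u|^2)\big)(x)\rangle$ a.e. Meanwhile $\tfrac{Kt^2}{t^2}|\nabla u_t|^2\to K|\nabla u|^2$. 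Assembling the pieces and using $\mathscr L_u\ls n\lambda$-type / Laplacian comparison only to certify that everything is a Radon measure, we obtain $-\tfrac12\mathscr L_{|\nabla u|^2}\ls \tfrac1n f(x,|\nabla u|^2)^2 \cdot(-1)\cdots$; more carefully the $\tfrac{n(a-1)^2}{t}$ term, after the $a$-optimization that matched it against $\tfrac{2a}{t}\cdot(\cdot)$, contributes $-\tfrac1n(f\circ F_t)^2$, which in the limit is $-\tfrac1n f(x,|\nabla u|^2)^2$; tracking signs gives exactly \eqref{eq5.36}. The $W^{1,2}_{loc}$-regularity of $|\nabla u|^2$ then follows from a Caccioppoli estimate applied to the (now two-sided) bound on $\mathscr L_{|\nabla u|^2}$ exactly as in the proof of Corollary~\ref{cor5.5}, once one also bounds $\mathscr L_{|\nabla u|^2}$ from above — which comes from applying the same argument to $-u$ is not available (that changes the equation), so instead one uses the weak $\Gamma_2$ bound of Gigli--Kuwada--Ohta cited in the Introduction, or re-derives the upper bound directly from the lower bound plus the fact that $|\nabla u|^2\in L^\infty$. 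The main obstacle, and the step demanding the most care, is the a.e.\ convergence of the difference quotient $\tfrac1t\big(f(F_t(x),|\nabla u(F_t(x))|^2)-f(x,|\nabla u(x)|^2)\big)$ to $\langle\nabla u,\nabla(f(\cdot,|\nabla u|^2))\rangle$: it requires simultaneously the differentiability of the spatial part of $f$ off $\mathcal N$, the a.e.\ convergence $\nabla u_t\to\nabla u$ from Lemma~\ref{lem5.6}, continuity of $\partial_s f$, and a uniform-integrability argument to pass the a.e.\ convergence through the integral against $\varphi$ — this is where hypotheses (a), (b) and the lower semi-continuity of $|\nabla u|$ are all genuinely used.
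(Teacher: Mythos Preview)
Your overall strategy---apply Proposition~\ref{prop5.3} to $u_t$, optimize over $a$, pass to the limit $t\to0^+$---matches the paper. But two steps as you describe them contain genuine gaps.

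\textbf{The $a$-optimization.} Your ``linearize around $a=1$, divide by $\varepsilon$, take $\varepsilon\to0^\pm$'' does not extract the $f^2/n$ term. The inequality $a^2\Delta u_t\ls f\circ F_t+\tfrac{n(a-1)^2}{t}+\tfrac{Kt}{3}(a^2+a+1)|\nabla u_t|^2$ is one-sided for every $a>0$; differentiating at $a=1$ (i.e.\ comparing with the $a=1$ case) yields nothing, since both sides of the comparison are nonnegative with no ordering between them. What the paper actually does is rewrite the inequality as $A_1a^2+A_2a+A_3\gs0$ for all $a>0$, check that $A_1>0$ for small $t$, and plug in the minimizing $a=-A_2/(2A_1)$; the resulting discriminant condition $4A_1A_3\gs A_2^2$ is exactly what produces, after algebra, the product $\Delta u_t\cdot f\circ F_t$ and hence eventually the $f^2/n$ term (see \eqref{eq5.38}). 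The first-order expansion you propose misses this completely.

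\textbf{The limit $t\to0^+$.} You write that $\mathscr L_{(u_t-u)/t}\to\mathscr L_{-\frac12|\nabla u|^2}$ ``because $\tfrac{u_t-u}{t}\to-\tfrac12|\nabla u|^2$ in $L^1_{loc}$ and $\mathscr L$ is continuous on distributions.'' This is false: by definition $\mathscr L_w(\phi)=-\int\langle\nabla w,\nabla\phi\rangle\,d\rv$, so passing to the limit requires control of $\nabla\big(\tfrac{u_t-u}{t}\big)$, not just of $\tfrac{u_t-u}{t}$. The paper handles this by first proving (the ``Claim'') a uniform bound $\big\|\nabla\tfrac{u_t-u}{t}\big\|_{L^2(\Omega_1)}\ls C$ via a Caccioppoli inequality applied to \eqref{eq5.41}; this yields weak $W^{1,2}$-convergence of $\tfrac{u_{t_j}-u}{t_j}$ to $-\tfrac12|\nabla u|^2$ along a subsequence, which is exactly what is needed both for \eqref{eq5.47} and for the $W^{1,2}_{loc}$-regularity of $|\nabla u|^2$. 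You invoke Caccioppoli only afterwards, for the regularity statement, and suggest patching the upper bound via the Gigli--Kuwada--Ohta $\Gamma_2$ result---but that result is precisely what the present theorem is meant to strengthen, so citing it here would be circular in spirit. The correct order is: Caccioppoli first (giving the uniform $W^{1,2}$ bound and hence weak compactness), then take limits; the regularity of $|\nabla u|^2$ and the distributional convergence of $\mathscr L_{(u_t-u)/t}$ both follow from this single estimate.

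A smaller point: in the limit of the $f$-difference quotient you need, in addition to the spatial differentiability of $f(\cdot,s)$ off $\mathcal N$, the inequality $g\circ F_t(x)\gs|\nabla u_t(x)|^2$ from Lemma~\ref{lem5.6}(i) together with $\partial_s f\ls0$ to discard the term $f\circ F_t-f(F_t(x),|\nabla u_t|^2)$ with the correct sign before taking $t\to0$. You gesture at this but do not make it explicit; in the paper it is a separate displayed computation.
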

\begin{proof}
Recalling the  pointwise  Lipschitz constant ${\rm Lip}u $ of $u$ in Section 2.2, we defined a function
$$g(x):=\max\{{\rm Lip}^2u, |\nabla u(x)|^2\},\quad \forall\ x\in \Omega.$$
Noting that the fact  $ {\rm Lip}u =|\nabla u|$ for almost all $x\in \Omega$, we have $g=|\nabla u|^2$ for almost all $x\in \Omega$, and hence
  $$\mathscr L_u=f\big(x,g(x)\big)\cdot\rv $$
  in sense of measure on $\Omega.$

The function $g$ is lower semi-continuous at almost all $x\in \Omega$. Indeed, by the definition of $g$, we have $g(x)\gs |\nabla u(x)|^2$ at any $x\in\Omega$. On the other hand, $g(x)=|\nabla u(x)|^2$ at almost all $x\in\Omega$. Combining with the fact that $|\nabla u|$ is lower semi-continuous at  almost all $x\in \Omega$, we can get the desired lower semi-continuity of $g$ at almost all $x\in \Omega$.

The combination of the assumption $\frac{\partial f}{\partial s}\ls0$ and the lower semi-continuity of $g$ at  almost everywhere in $\Omega$  implies   that  $f=f\big(x,g(x)\big)$ is upper semi-continuous at almost all  $x\in\Omega$.

Fix any open subset $\Omega'\Subset\Omega$. Let $u_t$ be Hamilton--Jacobi semigroup of $u$, defined on $\Omega'$and
let $F_t$ be the map defined in (\ref{eq5.10}). By applying Proposition \ref{prop5.3}, there exists some $t_0>0$
 such that for each $t\in (0,t_0)$,   we have
$$a^2\cdot \mathscr L_{ u_t}\ls\Big[ f\circ F_t +\frac{n(a-1)^2}{t}+\frac{Kt}{3}(a^2+a+1)|\nabla u_t|^2\Big]\cdot\rv$$
for all $a>0$. Hence, the absolutely continuous part $\Delta u_t$ satisfies
$$a^2\cdot \Delta u_t(x)\ls f\circ F_t(x)+ \frac{n(a-1)^2}{t}+\frac{Kt}{3}(a^2+a+1)|\nabla u_t(x)|^2 $$
for all $a>0$ and  almost all $x\in\Omega'$.
By setting  $$D=-\frac{K}{3}|\nabla u_t(x)|^2$$ and $$A_1=-\Delta u_t(x)+\frac{n}{t}-tD,\quad A_2=-\frac{2n}{t}-tD,\quad A_3=f\circ F_t(x) +\frac{n}{t}-tD,$$ we can  rewrite this equation as $$\qquad A_1\cdot a^2+A_2\cdot a+A_3\gs0$$
for all $a>0$ and  almost all $x\in\Omega'$.

By taking $a=1$, we have
 \begin{equation}\label{eq5.37}
 \Delta u_t(x)\ls f\circ F_t(x)-3tD.
 \end{equation}
  Because  $u$ is in Lipschitz, by Lemma \ref{lem5.6}(i), we have
$$|D|=|K|\cdot|\nabla u_t|/3\ls |K|\cdot{\bf Lip}u/3,\qquad g\ls{\bf Lip}^2u,$$
and then $f=f\big(x,g(x)\big)$ is bounded.

 The combination of equation \eqref{eq5.37} and  the boundedness of $D, \ f$ implies that $A_1>0$ and $A_2<0$,
   when $t$ is sufficiently small. By choosing $a=-\frac{A_2}{2A_1}$, we obtain
 \begin{equation}\label{eq5.38}
 \big(\Delta u_t(x)-f\circ F_t(x)\big)\cdot\big(\frac{n}{t}-tD\big)\ls -\Delta u_t(x)\cdot f\circ F_t(x) -3nD+\frac{3}{4}t^2D^2.
 \end{equation}
Therefore, \\
(by writing $f=f(x,g(x)) $ and $f\circ F_t=f\circ F_t(x)=f(F_t(x),g\circ F_t(x))$,)
 \begin{equation*}\begin{split}
 &\frac{\Delta u_t(x)-f\big(x,g(x)\big)}{t}\ls \frac{(n-t^2D)\big(f\circ F_t -f\big)/t-f\cdot f\circ  F_t -3nD+3t^2D^2/4}{n-t^2D+tf\circ F_t}\\
 &\qquad=\frac{f\circ F_t -f}{t}-\frac{f^2+3nD}{\mathcal A}+\frac{f^2 - f^2\circ F_t }{\mathcal A}+\frac{3t^2D^2}{4\mathcal A}\\
 &\qquad=\frac{f\circ F_t-f\big(F_t(x),|\nabla u_t(x)|^2\big)}{t}+\frac{f\big(F_t(x),|\nabla u_t(x)|^2\big)-f}{t}-\frac{f^2+3nD}{\mathcal A}\\
 &\qquad\quad +\frac{f^2- f^2\big(F_t(x),|\nabla u_t(x)|^2\big)}{\mathcal A}+\frac{f^2\big(F_t(x),|\nabla u_t(x)|^2\big)-f^2\circ F_t }{\mathcal A}+\frac{3t^2D^2}{4\mathcal A}\\
 &\qquad=\frac{f\big(F_t(x),|\nabla u_t(x)|^2\big)-f}{t}+\frac{f^2 -f^2\big(F_t(x),|\nabla u_t(x)|^2\big)}{\mathcal A}-\frac{f^2+3nD}{\mathcal A}\\
 &\qquad\quad +\Big(f\circ F_t-f\big(F_t(x),|\nabla u_t(x)|^2\big)\Big)\cdot\Big(\frac{1}{t}-\frac{f\circ F_t+f\big(F_t(x),|\nabla u_t(x)|^2\big)}{\mathcal A} \Big)\\ &\qquad\quad+\frac{3t^2D^2}{4\mathcal A}\\
  \end{split}\end{equation*}
for almost all $x\in \Omega'$, where $$\mathcal A=n-t^2D+tf\circ F_t.$$

From Lemma \ref{lem5.6}(i) and the definition of function $g$, we have
$$g\circ F_t(x)\gs {\rm Lip}^2u(F_t(x))\gs |\nabla u_t(x)|^2, \quad a.e., \  x\in \Omega'.$$
Combining with   the assumption $\frac{\partial f}{\partial s}\ls0$, we have, for almast all  $x\in \Omega',$
$$f\circ F_t-f\big(F_t(x),|\nabla u_t(x)|^2\big)=f\big(F_t(x),g\circ F_t(x) \big)-f\big(F_t(x),|\nabla u_t(x)|^2\big)\ls0.$$
On the other hand, by the boundedness of  $D$ and $f$, we have
$$  \mathcal A=n-t^2D+tf\circ F_t\gs  \frac{n}{2}$$
 when $t$ is sufficiently small.
By combining with the boundedness of $f$, we have
$$\frac{1}{t}-\frac{f\circ F_t+f\big(F_t(x),|\nabla u_t(x)|^2\big)}{\mathcal A}\gs0$$
when $t$ is sufficiently small.

 When $t$ is sufficiently small, by using
 $ \mathcal A \gs  n/2$  and the boundedness of $D$ again, we have
 \begin{equation*}\begin{split}
 \frac{ \Delta u_t(x)-f\big(x,g(x)\big)}{t}&\ls \frac{f\big(F_t(x),|\nabla u_t(x)|^2\big)-f}{t}+\frac{f^2-f^2\big(F_t(x),|\nabla u_t(x)|^2\big)}{\mathcal A}\\
 &\quad -\frac{f^2+3nD}{\mathcal A}+C\cdot t.
  \end{split}\end{equation*}
 Here and in the following in this proof, $C$ will denote various positive constants
 that do not depend on $t$.

Note that $\mathscr L_{u_t}\ls \Delta u_t\cdot\rv$ and $\mathscr L_{u}=f\cdot\rv$. The above inequality implies that
\begin{equation*}\begin{split}
&\frac{1}{t}\mathscr L_{u_t-u}\\&\ls \Big[\frac{f\big(F_t(x),|\nabla u_t(x)|^2\big)-f}{t}+\frac{f^2-f^2\big(F_t(x),|\nabla u_t(x)|^2\big)}{\mathcal A} -\frac{f^2+3nD}{\mathcal A}+C\cdot t\Big]\cdot\rv
\end{split}\end{equation*}
in sense of measure on $\Omega'$.

Fix arbitrary  $0\ls\phi\in Lip_0(\Omega').$ We have
\begin{align}\label{eq5.39}\frac{1}{t}\mathscr L_{u_t-u}(\phi)&\ls \int_{\Omega'}\phi\cdot\Big(\frac{f\big(F_t(x),|\nabla u_t(x)|^2\big)-f}{t}\Big)d\rv\\
\nonumber&\qquad+\int_{\Omega'}\phi\cdot\frac{f^2-f^2\big(F_t(x),|\nabla u_t(x)|^2\big)}{\mathcal A}d\rv\\
\nonumber&\qquad-\int_{\Omega'}\phi\cdot\frac{f^2+3nD}{\mathcal A}d\rv+Ct\sup|\phi|\\
\nonumber &:=I_1(t)+  I_2(t)-  I_3(t)+Ct\sup|\phi|.\end{align}

We want to take limit in above inequality. So we have to estimate  the limits of  $I_1(t)$, $I_2(t)$ and $I_3(t)$, as $t\to0^+$.

Since for almost all $x\in \Omega'$,
  $$g={\rm Lip}u(x)=|\nabla u(x)|,$$
  we have
\begin{equation}\label{eq5.40}\begin{split}
I_1(t)&=\int_{\Omega'}\phi\frac{f\big( F_t(x), |\nabla u_t(x)|^2 \big)-f\big(x ,g(x)\big)}{t}d\rv\\
&= \int_{\Omega'}\phi\frac{f\big(F_t(x), |\nabla u_t(x)|^2 \big)-f(x ,|\nabla u(x)|^2)}{t}d\rv\\
&=\int_{\Omega'}\phi\frac{f\big(F_t(x),|\nabla u_t(x)|^2  \big)-f\big(F_t(x),|\nabla u (x)|^2\big)}{t}d\rv\\
&\qquad + \int_{\Omega'}\phi\frac{f\big(F_t(x),|\nabla u(x)|^2\big)-f\big(x,|\nabla u(x)|^2\big)}{t}d\rv\\
&:=J_1(t)+J_2(t).
\end{split}
\end{equation}

$\ $\\
\indent In order to calculate $\lim_{t\to0^+}J_1(t)$, we need the following:\\
\noindent{\bf Claim:}\indent For any $\Omega_1\Subset\Omega'$, there exists constant $C>0$ such that
$$\int_{\Omega_1}\Big|\nabla\Big(\frac{u_t-u}{t}\Big)\Big|^2d\rv\ls C$$
for all $t\in(0, t_0)$.
\begin{proof}[Proof of the Claim]
For each  $t\in (0,t_0)$, by combining equation \eqref{eq5.37} and semi-concavity of  $u_t$, we have
\begin{equation}\label{eq5.41}\begin{split}
\mathscr L_{\frac{u_t-u}{t}}&\ls  \Big(\frac{f\circ F_t-f}{t}+K|\nabla u_t|^2\Big)\cdot\rv\\
&=\Big(\frac{f\big(F_t(x), g \circ F_t(x) \big)-f(x ,g)}{t} + K|\nabla u_t|^2\Big)\cdot\rv
\end{split}\end{equation}
in sense of measure on $\Omega'$.
Noting that $\frac{\partial f}{\partial s}\ls0$, and that, for almost all $x\in \Omega'$,
 $$g\circ F_t(x) \gs {\rm Lip}^2u(F_t(x))\gs|\nabla u_t(x)|^2,
\qquad g(x)=|\nabla u(x)|^2,$$
(see  Lemma \ref{lem5.6}(i))   we have, for each $t\in(0,t_0)$,
  \begin{equation*}\begin{split}
\mathscr L_{\frac{u_t-u}{t}}&\ls \Big(\frac{f\big(F_t(x), |\nabla u_t(x)|^2 \big)-f(x,|\nabla u|^2) }{t} + K|\nabla u_t|^2\Big)\cdot\rv\\
&\ls \Big(2{\bf Lip}f\cdot\frac{|xF_t(x)|+\big| |\nabla u_t|^2  -|\nabla u|^2\big| }{t} + K|\nabla u_t|^2\Big)\cdot\rv\\
&\ls \Big(2{\bf Lip}f\cdot\frac{\big| |\nabla u_t|^2  -|\nabla u|^2\big| }{t}+ 2{\bf Lip}f\cdot|\nabla u_t|+ K|\nabla u_t|^2\Big)\cdot\rv\\
&\qquad\qquad {\rm because}\ |xF_t(x)|=t\cdot|\nabla u_t(x)|\ {\rm for\ a.e.\ }  x\in \Omega'  ({\rm see}\ \eqref{eq5.11})\\
&\ls \Big(C \cdot\frac{ \big| |\nabla u_t|^2  -|\nabla u|^2\big| }{t}+C\Big)\cdot\rv\\
&\qquad\qquad {\rm  because}\  |\nabla u_t(x)|\ls {\bf Lip}u\ {\rm (see\ Lemma \ref{lem5.6}(i))}
\end{split}\end{equation*}
\begin{equation*}\begin{split}
&=\Big(C \cdot\ip{\nabla\Big(\frac{u_t - u }{t}\Big)}{\nabla (u_t+u)}+C\Big)\cdot\rv\qquad\qquad \\
&\ls \Big(C \cdot\Big|\nabla\Big(\frac{u_t - u }{t}\Big)\Big|+C\Big)\cdot\rv
\end{split}\end{equation*}
in sense of measure on $\Omega'$.

Since $u_t-u\ls0$, according to Caccioppoli inequality, Theorem 7.1 in \cite{bm06} (or by choosing test function $-\varphi(u_t-u)/t$
 for some suitable nonnegative cut-off $\varphi$ on $\Omega'$), for any $\Omega_1\Subset\Omega'$, there exists positive constant $C$,
  independent of $t$,
   such that
\begin{equation}\label{eq5.42}
\int_{\Omega_1}\Big|\nabla \Big(\frac{u_t-u}{t}\Big)\Big|^2d\rv\ls C\int_{\Omega'}\Big(\frac{u_t-u}{t}\Big)^2d\rv+C.
\end{equation}

On the other hand, for almost all $x\in \Omega'$, according Eq. (2.6) in \cite{lv07}, we have
$$\frac{|u(x)- u_t(x)|}{t}\ls \frac{{\bf Lip}^2u}{2}.$$
Consequently,
$$\int_{\Omega_1}\Big(\frac{u_t-u}{t}\Big)^2d\rv\ls C.$$
The desired estimate follows from the combination of this and \eqref{eq5.42}. Now the proof of the Claim is finished.
\end{proof}

Let us continue the proof of Theorem \ref{bochner}.

Let $\Omega_1={\rm supp}\phi\Subset \Omega'.$
By combining  \eqref{eq5.32}, above \textbf{Claim} and   reflexivity of  $W^{1,2}(\Omega)$  (see Theorem 4.48 of \cite{c99}),
 we can conclude the following \textbf{facts}:\\
\indent (i)\indent \ $u_t$  converges (strongly) to $u$ in $W^{1,2}(\Omega_1)$ as $t\to0^+;$\\
\indent (ii)\indent there exists some sequence $t_j$ converging to $0^+$, such that $(u_{t_j}-u)/{t_j}$ converges weakly to
 $-|\nabla u|^2/2$
 in $W^{1,2}(\Omega_1)$, as $t_j\to0^+$.\\

Let us estimate $J_1(t)$.
For each $t\in (0,t_0)$,
\begin{equation*}\begin{split}
J_1(t)&=\int_{\Omega'}\phi\frac{f\big( F_t(x),|\nabla u_t(x)|^2 \big)-f(F_t(x),|\nabla u(x)|^2) }{t}d\rv\\
&=\int_{\Omega'}\phi\frac{f\big( F_t(x),|\nabla u_t(x)|^2 \big)-f(F_t(x),|\nabla u(x)|^2) }{|\nabla u_t|^2-|\nabla u|^2}\cdot
\ip{\nabla (u_t+u)}{\nabla\Big(\frac{u_t -u}{t}\Big)}d\rv\\
&=\int_{\Omega'}\phi\cdot\frac{\partial f}{\partial s}\big(F_t(x),s_t\big)\cdot
\ip{\nabla (u_t+u)}{\nabla\Big(\frac{u_t -u}{t}\Big)}d\rv
\end{split}\end{equation*}
for some   $s_t$ between $|\nabla u_t(x)|^2$ and $|\nabla u(x)|^2$.

Let $t_j$ be the sequence coming from above fact (ii).  According to Lemma \ref{lem5.6}(ii),
$$\lim_{t_j\to0^+}|\nabla u_{t_j}(x)|=|\nabla u(x)|$$
 for almost all $x\in \Omega'$, combining with the continuity of $\frac{\partial f}{\partial s} $, we get
$$\lim_{t_j\to0^+}\frac{\partial f}{\partial s}\big(F_{t_j}(x),s_{t_j}\big)
=\frac{\partial f}{\partial s}\big(x,|\nabla u(x)|^2\big).$$
 On the other hand, by the above facts (i), (ii) and the boundedness of
$$\Big|\frac{\partial f}{\partial s}\big(F_t(x),s_t\big)\Big|\ls {\bf Lip}f,$$
we have
\begin{equation}\label{eq5.43}\begin{split}
\lim_{t_j\to0^+}J_1(t_j)&=\int_{\Omega'}\phi\cdot\frac{\partial f}{\partial s}(x,|\nabla u|^2)\cdot
\ip{2\nabla u}{\nabla \Big(\frac{-|\nabla u|^2}{2}\Big)}d\rv\\
&=-\int_{\Omega'}\phi\cdot\frac{\partial f}{\partial s}(x,|\nabla u|^2)\cdot\ip{\nabla  u }{\nabla |\nabla u|^2}d\rv.
\end{split}\end{equation}

Let us calculate the limit $J_2(t_j)$, where the sequence comes from above fact (ii).

For each $t\in (0,t_0)$, if  $x\in \Omega'\backslash \mathcal N$ and $u_t$ is differentiable at point $x$, by Lemma \ref{lem5.2}, we have
\begin{equation*}\begin{split}
f(F_t(x),|\nabla u(x)|^2)&-f(x, |\nabla u(x)|^2)\\&=|xF_t(x)|\ip{\nabla_1f(x, |\nabla u(x)|^2)}{\uparrow_x^{F_t(x)}}+o(|xF_t(x)|)\\
&=-t\cdot\ip{\nabla_1 f(x, |\nabla u(x)|^2)}{\nabla u_t(x)}+o(|xF_t(x)|)
\end{split}\end{equation*}
where $\nabla_1f(x,s)$ means the differential of function $f(\cdot, s)$ at point $x$ (see eqution \eqref{eq2.16}).
For the sequence $t_j$, the combination of this, equation \eqref{eq5.11} and Lemma \ref{lem5.6}(ii)
 $$\lim_{t_j\to0^+} \nabla u_{t_j}(x) =  \nabla u (x)$$
  implies that
$$\lim_{t_j\to 0^+}\frac{f(F_{t_j}(x),|\nabla u(x)|^2)-f(x, |\nabla u(x)|^2)}{t_j}=-\ip{\nabla_1 f(x, |\nabla u(x)|^2)}{\nabla u(x)} $$
for almost all $x\in \Omega'$.
Note that
$$\Big|\frac{f(F_{t_j}(x),|\nabla u(x)|^2)-f(x, |\nabla u(x)|^2)}{t_j}\Big|\ls {\bf Lip}f\cdot \frac{|xF_{t_j}(x)|}{t_j}\ls
  {\bf Lip}f\cdot{\bf Lip}u $$
for almost everywhere in $\Omega'$. Dominated Convergence Theorem concludes that
\begin{equation*}\begin{split}
\lim_{t_j\to0^+}J_2(t_j)&= \lim_{t_j\to0^+} \int_{\Omega'}\phi\frac{f(F_{t_j}(x),|\nabla u(x)|^2)-f(x, |\nabla u(x)|^2)}{t_j}d\rv\\
&=-\int_{\Omega'}\phi\ip{\nabla_1 f(x, |\nabla u(x)|^2)}{\nabla u(x)} d\rv.
\end{split}\end{equation*}

By combining with equation \eqref{eq5.40} and \eqref{eq5.43}, we have
\begin{equation}\label{eq5.44}\begin{split}
\lim_{t_j\to0^+}I_1(t_j)&\ls \lim_{t_j\to0^+}J_1(t_j)+\lim_{t_j\to0^+}J_2(t_j)\\
&= -\int_{\Omega'}\phi\cdot\ip{\nabla u}{\frac{\partial f}{\partial s}(x,|\nabla u|^2)\cdot\nabla |\nabla u|^2+\nabla_1 f(x, |\nabla u(x)|^2)} d\rv\\
&= -\int_{\Omega'}\phi\cdot\ip{\nabla u}{\nabla f(x,|\nabla u|^2)} d\rv.
\end{split}\end{equation}

$ \ $\\
\indent Let us calculate $\lim_{t_j\to0}I_2(t_j )$ for the sequence $t_j\to0^+ $ coming from the above fact (ii).

From Lemma \ref{lem5.6}(ii),
 $$\lim_{t_j\to0^+}|\nabla u_{t_j}(x)|^2= |\nabla u (x)|^2=g(x)$$
   at almost all $x\in\Omega'$. Combining with the Lipschitz continuity of $f(x,s)$ and $\mathcal A\gs n/2$ for sufficiently small $t$, we have
$$\lim_{t_j\to0^+}\frac{f^2\big(F_{t_j}(x),|\nabla u_{t_j}|^2\big)-f^2(x,g(x))}{\mathcal A} =0$$
at almost all $x\in\Omega'$. On the other hand, using that  $\mathcal A\gs n/2$  again (when $t$ is sufficiently small) and that $f$ is bounded, we have
$$\Big| \frac{f^2\big(F_{t_j}(x),|\nabla u_{t_j}|^2\big)-f^2(x,g(x))}{\mathcal A}\Big|\ls C,\quad {\rm for\ almost\ all}\ \ x\in\Omega',\quad j=1,2,\cdots,$$
for some constant $C$.
Dominated Convergence Theorem concludes that
\begin{equation}
\label{eq5.45}\lim_{t_j\to0^+}I_2(t_j)=\lim_{t_j\to0^+}\int_{\Omega'}\frac{-f^2\big(F_{t_j}(x),|\nabla u_{t_j}|^2\big)
+f^2\big(x,g(x)\big)}{\mathcal A}d\rv=0 .
\end{equation}

$ \ $\\
\indent Let us calculate $\lim_{t_j\to0}I_3(t_j)$ for the sequence $t_j$ coming from above fact (ii).

According to Lemma \ref{lem5.6} (i) and (ii), we get
 $$|\nabla u_{t_j}|\ls {\bf Lip}u\quad {\rm  and}\quad \lim_{t_j\to0^+}|\nabla u_{t_j}|=|\nabla u|.$$
By combining with the boundedness of $D$ and $f$,  and applying Dominated Convergence Theorem, we  conclude  that
$$\lim_{t_j\to0^+}I_3(t_j)=\int_{\Omega'}\phi \frac{f^2-nK|\nabla u|^2}{\mathcal A} d\rv=\int_{\Omega'}\phi\frac{f^2\big(x,g(x)\big)-nK|\nabla u|^2}{n} d\rv.$$
By the fact that $$g(x)={\rm Lip} u=|\nabla u|$$  for almost everywhere in $\Omega'$, we get
\begin{equation}
\label{eq5.46}\lim_{t_j\to0^+}I_3(t_j)= \int_{\Omega'}\phi\big(\frac{f^2(x,|\nabla u|^2)}{n}- K|\nabla u|^2\big)d\rv.
\end{equation}

By applying above \textbf{Claim} again,
$$ \frac{u_{t_j}-u}{t_j}\longrightarrow  -\frac{|\nabla u|^2}{2} \quad {\rm weakly\ in}\ \ W^{1,2}(\Omega_1), $$
  as $t_j\to0$.
 By combining the definition of $\mathscr L_{u_{t_j}-u}$, (see  the first paragraph of Section 4.1.) we have
\begin{equation}\label{eq5.47}
 \lim_{t_j\to0^+}\frac{1}{t_j}\mathscr L_{u_{t_j}-u}(\phi) =-\lim_{t_j\to0^+}\int_{\Omega'}\ip{\nabla \phi}{\nabla\Big(\frac{u_{t_j}-u}{t_j}\Big)}
= \frac{1}{2}\int_{\Omega'}\ip{\nabla \phi}{\nabla |\nabla u|^2}d\rv.
 \end{equation}

The combination of equations \eqref{eq5.39} and  \eqref{eq5.44}--\eqref{eq5.47} shows that, for any $\phi\in Lip_0(\Omega')$,
\begin{equation*}\begin{split}
\frac{1}{2}\int_{\Omega'}&\ip{\nabla \phi}{\nabla |\nabla u|^2}d\rv \\&\ls  -\int_{\Omega'}\phi\Big(\ip{\nabla u}{\nabla f(x,|\nabla u|^2)} +\frac{f^2(x,|\nabla u|^2)}{n}-K|\nabla u|^2\Big)d\rv.
\end{split}\end{equation*}
 The desired result follows from this and the definition of $\mathscr L_{|\nabla u|^2}$.
 Now the proof of Theorem \ref{bochner} is completed.
\end{proof}

If $f(x,s)=f(x)$, then we can remove the technical condition that $|\nabla u|$ is lower semi-continuous at almost everywhere in $\Omega$.
 That is,
\begin{cor}\label{corbochner} Let $M$ be an $n$-dimensional Alexandrov space with Ricci curvature bounded from below by $-K$
 and $\Omega$ be a domain in $M$.  Assume function $f\in Lip(\Omega)$ and $u\in W^{1,2}(\Omega)$ satisfying
  $$\mathscr L_u= f \cdot\rv.$$

  Then we have $|\nabla u|^2\in W^{1,2}_{loc}(\Omega) $ and $|\nabla u|$ is lower semi-continuous on $\Omega.$ Consequencely,  we get
\begin{equation*}
\mathscr L_{|\nabla  u |^2} \gs  2\Big( \frac{f^2}{n}+\ip{\nabla u}{\nabla  f}-K|\nabla u|^2\Big)d\rv
\end{equation*}
in sense of measure on $\Omega$.
\end{cor}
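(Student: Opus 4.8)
The plan is to deduce this from Theorem \ref{bochner} by verifying its hypotheses in the special case where $f(x,s) = f(x)$ is independent of $s$. The only genuine obstacle is the hypothesis that $|\nabla u|$ admits a representative that is lower semi-continuous at almost every point; once that is in hand, conditions (a) and (b) of Theorem \ref{bochner} are immediate: $f(x,s)=f(x)$ satisfies (a) because $f\in Lip(\Omega)$ is differentiable almost everywhere by Rademacher's theorem (Cheeger \cite{c99}, Bertrand \cite{b08}), and it satisfies (b) trivially since $\partial f/\partial s \equiv 0$, which is continuous and non-positive. Moreover $\nabla f(x,|\nabla u|^2) = \nabla f$ and $f^2(x,|\nabla u|^2)=f^2$, so the conclusion of Theorem \ref{bochner} becomes exactly the displayed inequality, and $|\nabla u|^2 \in W^{1,2}_{loc}(\Omega)$ follows as well.

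So the heart of the matter is to prove that $|\nabla u|$ has a lower semi-continuous representative on $\Omega$ when $\mathscr L_u = f\cdot\rv$ with $f$ Lipschitz. First I would recall that by Corollary \ref{cor5.5}, $u$ is locally Lipschitz on $\Omega$, so the Hamilton--Jacobi machinery of Section 5.1 applies. The natural candidate for the good representative is
$$
x \longmapsto \limsup_{t\to 0^+}|\nabla u_t(x)|,
$$
or more robustly a representative built from the pointwise Lipschitz constant ${\rm Lip}\,u$. Recall from Section 2.2 that ${\rm Lip}\,u(x) = |\nabla^- u|(x) = |\nabla u(x)|$ for almost every $x$, and that ${\rm Lip}\,u$ is itself defined by an upper $\limsup$ which need not be lower semi-continuous. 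The idea is to instead use the fact, from Lemma \ref{lem5.6}(i), that $|\nabla^- u|(F_t(x)) \le |\nabla u_t(x)| \le {\rm Lip}\,u(F_t(x))$ for a.e.\ $x$, together with Lemma \ref{lem5.6}(ii), which gives $\nabla u_t(x)\to \nabla u(x)$ a.e.\ along any sequence $t_j\to 0^+$. The key structural input is that for each fixed $t>0$, the map $x\mapsto |\nabla u_t(x)|$ is continuous wherever $u_t$ is differentiable, since $|\nabla u_t(x)| = |xF_t(x)|/t$ and $F_t$ is continuous at such points (Lemma \ref{lem5.2}). Thus I would show that $g_t(x) := \liminf_{y\to x}|\nabla u_t(y)|$ is lower semi-continuous by construction, and that $g(x):=\liminf_{t\to 0^+} g_t(x)$ (or a suitable diagonal/countable version) is a lower semi-continuous function agreeing with $|\nabla u|$ a.e.; the a.e.\ agreement uses Lemma \ref{lem5.6}(ii) and the a.e.\ equality ${\rm Lip}\,u = |\nabla u|$. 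An equivalent and perhaps cleaner route: exploit the semi-concavity of each $u_t$, which forces good upper/lower control of $|\nabla u_t|$ near any point, and pass to the limit.

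The remaining step is purely formal: apply Theorem \ref{bochner} with this representative of $|\nabla u|$ and with $f(x,s)=f(x)$, obtaining $|\nabla u|^2\in W^{1,2}_{loc}(\Omega)$ and
$$
\mathscr L_{|\nabla u|^2} \;\gs\; 2\Big(\frac{f^2}{n} + \ip{\nabla u}{\nabla f} - K|\nabla u|^2\Big)\cdot\rv
$$
in the sense of measures. I expect the main obstacle to be a careful justification that the candidate representative is genuinely lower semi-continuous at a.e.\ point (not merely at points of differentiability of $u$), which requires combining the continuity of $F_t$, the uniform Lipschitz bound ${\bf Lip}\,u_t \le {\bf Lip}\,u$, and the a.e.\ convergence $\nabla u_{t_j}\to\nabla u$ in a way that survives taking $\liminf$ in both $y\to x$ and $t\to 0^+$; the interchange of these two limiting processes is where care is needed.
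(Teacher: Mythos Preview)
Your overall plan --- verify the l.s.c.\ hypothesis and then invoke Theorem \ref{bochner} with $f(x,s)=f(x)$ --- is the right architecture, and your check of conditions (a) and (b) is fine. But the core step, constructing a lower semi-continuous representative of $|\nabla u|$ directly from the Hamilton--Jacobi flow, has a genuine gap. Your candidate $g(x):=\liminf_{t\to 0^+} g_t(x)$ with $g_t(x)=\liminf_{y\to x}|\nabla u_t(y)|$ is not lower semi-continuous in general: an infimum (or $\liminf$) of l.s.c.\ functions need not be l.s.c. The ``interchange of limits'' you flag at the end is exactly the obstruction, and none of the ingredients you list --- continuity of $F_t$ at differentiability points, the uniform bound $\mathbf{Lip}\,u_t\ls\mathbf{Lip}\,u$, or the a.e.\ convergence $\nabla u_{t_j}\to\nabla u$ --- gives you control of $\liminf_{y\to x}$ that is \emph{uniform in $t$}, which is what you would need.

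The paper takes a different and more robust route. The key observation is that in the proof of Theorem \ref{bochner}, the l.s.c.\ of $|\nabla u|$ is used \emph{only} to make $f(x,g(x))$ upper semi-continuous so that Proposition \ref{prop5.3} applies; when $f(x,s)=f(x)$ is Lipschitz this is automatic. So one can rerun enough of the Hamilton--Jacobi argument (specifically, the estimate $\mathscr L_{(u_t-u)/t}\ls C\cdot\rv$ and the \textbf{Claim} in the proof of Theorem \ref{bochner}) \emph{without} knowing l.s.c., obtain $|\nabla u|^2\in W^{1,2}_{loc}$ and a crude bound $\mathscr L_{|\nabla u|^2}\gs -2C\cdot\rv$, and then get lower semi-continuity for free from potential theory: setting $w=|\nabla u|^2+2C$ one has $\mathscr L_w\gs -w\cdot\rv$, and the product trick $v(x,t)=w(x)e^t$ on $M\times\R$ gives $\mathscr L_v\gs0$, so $v$ (hence $w$, hence $|\nabla u|$) has a l.s.c.\ representative by \cite{km02}. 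Only then is Theorem \ref{bochner} applied. This avoids the limit-interchange entirely.
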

\begin{proof}
At first, by Corollary \ref{cor5.5}, we conclude that $u\in Lip_{loc}(\Omega).$ Fix any $\Omega^*\Subset\Omega$.
Then $u\in Lip(\Omega^*) $ and $f(x,s)=f(x)$ satisfies the condition (a), (b) on $\Omega^*$ in Theorem \ref{bochner}.

Let us recall that in the proof of Theorem \ref{bochner}, the technique condition that $|\nabla u|$ is lower semi-continuous
 (with $\frac{\partial f}{\partial s}\ls0$) is only used to ensure the upper semi-continuity of $f=f\big(x,g(x)\big)$ on $\Omega^*$ so that  Proposition \ref{prop5.3} is applicable.
 Now, since $f(x)$ is Lipschitz, Proposition \ref{prop5.3} still works for equation
 $$\mathscr L_u=f\cdot\rv.$$

 Using the same notations as in the above proof (with $f(x,s)=f(x)$) of Theorem \ref{bochner}, we get the corresponding equation
\begin{equation*}
\mathscr L_{\frac{u_t-u}{t}} \ls  \Big(\frac{f\circ F_t-f}{t}+K|\nabla u_t|^2\Big)\cdot\rv
 =\Big(\frac{f\big(F_t(x)\big)-f(x)}{t} + K|\nabla u_t|^2\Big)\cdot\rv
 \end{equation*}
in sense of measure  on any $\Omega'\Subset\Omega^*$, (see equation \eqref{eq5.41} in the proof of the above \textbf{Claim}).
Then, we get, by \eqref{eq5.11}, $|xF_t(x)|=t|\nabla u_t(x)| $ at almost all $x\in\Omega^*$,
\begin{equation}\label{eq5.48}\begin{split}
\mathscr L_{\frac{u_t-u}{t}} &\ls  \Big({\bf Lip}f\frac{|x F_t(x)|}{t}+K|\nabla u_t|^2\Big)\cdot\rv
 =\Big( {\bf Lip}f\cdot|\nabla u_t|+K|\nabla u_t|^2\Big)\cdot\rv\\
& \ls C\cdot \rv \qquad {\rm( because }\quad |\nabla u_t|\ls {\bf Lip}u.)
\end{split}\end{equation}
in sense of measure on   $\Omega'$. Here and in the following, $C$ denotes various positive constants independent of $t$.

The same argument as in the proof of above \textbf{Claim}, we obtain that the $W^{1,2}$-norm of $\frac{u_t-u}{t}$
is uniformly bounded on any $\Omega_1\Subset\Omega'$. Hence there exists a suquence $t_j\to0^+$ such that
$$ \frac{u_{t_j}-u}{t_j}\longrightarrow  -\frac{|\nabla u|^2}{2} \quad {\rm weakly\ in}\ \ W^{1,2}(\Omega_1), $$
  as $t_j\to0^+.$ Combining with \eqref{eq5.48}, we have $|\nabla u|^2\in W^{1,2}_{loc}(\Omega_1)$ and
  $$\mathscr L_{|\nabla u|^2}\gs -2C\cdot\rv$$
in sense of measure on $\Omega_1$.

By setting
$$w=|\nabla u|^2+2C,$$
we have $w\gs 2C$ and
$$\mathscr L_w\gs-2C\cdot\rv\gs -w\cdot\rv.$$
Consider the product space $M\times \R$ (with directly product metric) and  the function $v(x,t): \Omega'\times \R\to\R$ as
 $$v(x,t):=w(x)\cdot e^t.$$
 Then $v$ satisfies $\mathscr L_v\gs0 $ in $\Omega_1\times \R$. Hence it has a lower semi-continuous representative
   (see Theorem 5.1 in \cite{km02}). Therefore, $w$ is lower semi-continuous on  $\Omega_1$. So does $|\nabla u|$.

 Because of the arbitrariness of $\Omega_1\Subset\Omega'\Subset\Omega^*\Subset\Omega$, we obtain that $|\nabla u|^2\in W^{1,2}_{loc}(\Omega) $
 and $|\nabla u|$ is lower semi-continuous on $\Omega.$

 It is easy to check that $f(x,s)=f(x)$ satisfies the condition (a), (b) on $\Omega$ in Theorem \ref{bochner}
  (since $f$ is Lipschitz and $\partial f/\partial s=0$.). We can apply  Theorem \ref{bochner} to equation
 $$\mathscr L_u=f\cdot\rv $$
 and conclude the last assertion of the corollary.
\end{proof}

As a direct application of the Bochner type formula, we have the following Lichnerowicz estimate, which was earlier
 obtained by Lott--Villani in \cite{lv07}   by  a different method. Further applications have been given in \cite{qzz}.
\begin{cor}\label{cor5.9} Let $M$ be an $n$-dimensional Alexandrov space with Ricci curvature bounded below by a
 positive constant $n-1$. Then we have
  $$\int_M|\nabla u|^2d\rv\gs n \int_Mu^2d\rv$$ for all $u\in W^{1,2}(M)$ with $\int_Mud\rv=0.$
\end{cor}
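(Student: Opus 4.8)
The plan is to establish the equivalent spectral gap estimate: the first nonzero eigenvalue $\lambda_1$ of $-\Delta$ on $M$ satisfies $\lambda_1\gs n$, whence $\int_M|\nabla u|^2d\rv\gs\lambda_1\int_Mu^2d\rv\gs n\int_Mu^2d\rv$ for every mean-zero $u\in W^{1,2}(M)$. First I would note that $M$ is compact (Bonnet--Myers, applicable since $Ric\gs n-1>0$), so that the Sobolev embedding $W^{1,2}(M)\hookrightarrow L^2(M)$ is compact and the infimum
$$\lambda_1:=\inf\Big\{\int_M|\nabla u|^2\,d\rv:\ u\in W^{1,2}(M),\ \int_Mu\,d\rv=0,\ \int_Mu^2\,d\rv=1\Big\}$$
is attained by some $u_1\not\equiv0$; this $u_1$ is then a weak eigenfunction, $\mathscr L_{u_1}=-\lambda_1u_1\cdot\rv$, with $\lambda_1>0$. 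Moreover $u_1$ is Lipschitz continuous on $M$: it is continuous by the standard Nash--Moser iteration, and then Lipschitz because $P_tu_1=e^{-\lambda_1t}u_1$ solves the heat equation and such solutions are Lipschitz by \cite{zz10-2} (alternatively, a bootstrap via Corollary \ref{cor5.5}).

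The key step is to feed $u=u_1$ into the Bochner type inequality. I apply Corollary \ref{corbochner} with $f=-\lambda_1u_1\in Lip(M)$ and $K=1-n$ (so that $Ric\gs-K=n-1$); it yields $|\nabla u_1|^2\in W^{1,2}_{loc}(M)=W^{1,2}(M)$, the lower semicontinuity of $|\nabla u_1|$, and
$$\mathscr L_{|\nabla u_1|^2}\gs2\Big(\frac{\lambda_1^2u_1^2}{n}-\lambda_1|\nabla u_1|^2+(n-1)|\nabla u_1|^2\Big)\cdot\rv$$
in the sense of measures on $M$. Testing this inequality against the constant function $\varphi\equiv1$, which lies in $Lip_0(M)$ and is nonnegative since $M$ is compact without boundary, the left-hand side equals $-\int_M\ip{\nabla\varphi}{\nabla|\nabla u_1|^2}d\rv=0$, so
$$0\gs\int_M\Big(\frac{\lambda_1^2u_1^2}{n}+(n-1-\lambda_1)|\nabla u_1|^2\Big)d\rv.$$

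Finally I would substitute the identity $\int_M|\nabla u_1|^2\,d\rv=\lambda_1\int_Mu_1^2\,d\rv=\lambda_1$, obtained by testing $\mathscr L_{u_1}=-\lambda_1u_1\cdot\rv$ against $u_1\in W^{1,2}_0(M)=W^{1,2}(M)$, to get
$$0\gs\frac{\lambda_1^2}{n}+(n-1-\lambda_1)\lambda_1=\lambda_1\Big(\frac{\lambda_1}{n}+n-1-\lambda_1\Big).$$
Since $\lambda_1>0$ and $n\gs2$ (so that $n-1$ is the positive constant of the hypothesis), this forces $\lambda_1\cdot\frac{n-1}{n}\gs n-1$, i.e. $\lambda_1\gs n$, and by definition of $\lambda_1$ this is exactly the claimed inequality. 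The only genuinely delicate point is the Lipschitz regularity of the first eigenfunction $u_1$, which is needed to invoke Corollary \ref{corbochner} as stated; once this regularity is granted, the proof is a single integration of the Bochner inequality against the constant test function together with the elementary algebra above.
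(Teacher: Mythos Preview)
Your proof is correct and follows essentially the same route as the paper: reduce to the first eigenfunction, establish its Lipschitz regularity, apply the Bochner inequality of Corollary~\ref{corbochner} to $\mathscr L_{u_1}=-\lambda_1 u_1\cdot\rv$, and test against $\phi\equiv 1$ on the compact space $M$. The only cosmetic difference is in the regularity step: the paper obtains the Lipschitz continuity of the eigenfunction by observing that $u_1(x)e^{\sqrt{\lambda_1}t}$ is harmonic on the product $M\times\R$ and invoking Corollary~\ref{cor5.5} directly (so that $f\equiv 0$ is trivially Lipschitz), whereas you appeal to heat-flow regularity via \cite{zz10-2}; both are valid, and your more explicit algebra at the end simply unpacks what the paper calls ``immediate.''
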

\begin{proof}
Let $u$ be a first eigenfunction and $\lambda_1$ be the first eigenvalue. It is clear that $\lambda_1\gs0$
 and $u(x)e^{\sqrt{\lambda_1}t}$ is a harmonic function on $M\times\R.$
  According to  Corollary \ref{cor5.5}, we know that $u$ is locally Lipschitz continuous.

   (Generalized) Bonnet--Myers' theorem implies that $M$ is compact (see  Corollary 2.6 in \cite{s06-2}).
 By using the Bochner type formula Corollary \ref{corbochner} to equation
  $$\mathscr L_u=-\lambda_1u,$$
  and choosing test function $\phi=1$, we get the  desired estimate immediately.
\end{proof}
\section{Gradient estimates for harmonic functions}
Let $\Omega$ be a bounded domain of an $n$-dimensional Alexandrov space with Ricci curvature $\gs-K$ and $K\gs0$.

 In the section, we always assume that $u$ is a positive harmonic function on $\Omega$.
According to Corollary \ref{corbochner}, we know that
 $|\nabla u|$ is lower semi-continuous  in $\Omega $ and  $|\nabla u|^2\in W^{1,2}_{loc}(\Omega).$

\begin{rem} In the previous version of this paper, by using some complicated pointwise $C^1$-estimate of elliptic equation (see, for example, \cite{c89,lyv00}),
 we can  actually show that $|\nabla u|$ is continuous at almost all in $\Omega$. Nevertheless, in this new version, we avoid using this continuity result.
\end{rem}

Now, let us prove the following  estimate.
\begin{lem}\label{lem6.2}Let $M$ be an $n$-dimensional Alexandrov space with Ricci curvature $\gs -K$ and $K\gs0$.
 Suppose that  $u$ is a positive harmonic function in $B_p(2R)$.
Then we have
\begin{equation}\label{eq6.1}
\|Q\|_{L^s(B_p(R))}\ls \Big(2nK+
\frac{8ns}{R^2}\Big)\cdot \Big( \mathrm{vol}\big(B_p(2R)\big)\Big)^{1/s}
\end{equation}
for $s\gs 2n+4$, where $Q=|\nabla \log u|^2$.
\end{lem}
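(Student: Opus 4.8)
The plan is to derive \eqref{eq6.1} by applying the Bochner type formula of Theorem \ref{bochner} to $v:=\log u$ and then running a single step of Nash--Moser iteration.

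First I would record the basic differential inequality. Since $u$ is positive harmonic on $B_p(2R)$, Corollary \ref{cor5.5} gives that $u$ is locally Lipschitz, and Corollary \ref{corbochner} (applied to $u$ with right hand side $0$) gives that $|\nabla u|$ is lower semi-continuous; as $u$ is continuous and positive, $|\nabla v|=|\nabla u|/u$ is lower semi-continuous too. Testing $\mathscr L_u=0$ against $\phi/u\in Lip_0$ (legitimate since $u$ is bounded below on $\mathrm{supp}\,\phi$) gives $\mathscr L_v=-|\nabla v|^2\cdot\rv=-Q\cdot\rv$. Hence Theorem \ref{bochner} applies to $v$ on any $B_p(r)$, $r<2R$, with $f(x,s)=-s$ — which satisfies (a), (b), $\partial f/\partial s=-1\ls0$ — and yields, since $f(x,|\nabla v|^2)=-Q$,
\begin{equation*}
\mathscr L_{Q}\gs 2\Big(\frac{Q^2}{n}-\ip{\nabla v}{\nabla Q}-KQ\Big)\cdot\rv
\end{equation*}
in the sense of measure, together with $Q\in W^{1,2}_{loc}(B_p(2R))$. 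The essential gain over the weak $\Gamma_2$--inequality is the positive term $\tfrac2nQ^2$.

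Next I would test this inequality against $\varphi=\eta^{2}Q^{s-2}$ with $\eta=\psi^{s}$, where $\psi$ is the standard cut-off with $\psi\equiv1$ on $B_p(R)$, $\mathrm{supp}\,\psi\subset B_p(2R-\delta)$ and $|\nabla\psi|\ls 1/R$ (letting $\delta\to0$ at the end and using $\rv(B_p(2R-\delta))\ls\rv(B_p(2R))$). Since $Q\in W^{1,2}_{loc}$ is bounded on the compact set $\mathrm{supp}\,\psi$, $\varphi$ is a bounded $W^{1,2}$ function with compact support, and the weak inequality extends to it by approximation. Expanding $\nabla\varphi=2\eta\nabla\eta\,Q^{s-2}+(s-2)\eta^2Q^{s-3}\nabla Q$ produces the good term $-(s-2)\int\eta^2Q^{s-3}|\nabla Q|^2$ on one side, and on the other the cross terms $\int\eta Q^{s-2}\ip{\nabla\eta}{\nabla Q}$, $\int\eta^2Q^{s-2}\ip{\nabla v}{\nabla Q}$ together with $\tfrac2n\int\eta^2Q^{s}$ and $-2K\int\eta^2Q^{s-1}$. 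Using $|\nabla v|=Q^{1/2}$, Cauchy--Schwarz and Young's inequality, both cross terms are dominated by $\tfrac{s-2}{2}\int\eta^2Q^{s-3}|\nabla Q|^2$ plus lower order terms — here it is crucial that the coefficient $s-2$ of the good term be large, which is precisely why $s\gs 2n+4$ is assumed — so the $|\nabla Q|^2$ contributions cancel, leaving an inequality of the form
\begin{equation*}
\Big(\frac2n-\frac{2}{s-2}\Big)\int\eta^2Q^{s}\,\rv\ls \frac{C(s)}{R^2}\int Q^{s-1}\psi^{2(s-1)}\,\rv+2K\int\eta^2Q^{s-1}\,\rv ,
\end{equation*}
the exponent $\eta=\psi^{s}$ being chosen so that $|\nabla\eta|^2\ls (s^2/R^2)\psi^{2(s-1)}=(s^2/R^2)(\eta^2)^{(s-1)/s}$.

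Finally, Young's inequality in the form $ab\ls\varepsilon a^{s/(s-1)}+c(\varepsilon)b^{s}$ converts each $Q^{s-1}$ factor into $\varepsilon\,\eta^2Q^{s}$ plus a constant; absorbing the $\varepsilon$--terms into the left side (possible since $\tfrac2n-\tfrac2{s-2}\gs\tfrac1n$ for $s\gs2n+4$) and taking $\varepsilon$ comparable to $(2nK)^{-1}$ on the $K$--term and to $R^2/(8ns)$ on the gradient term, one obtains
\begin{equation*}
\int_{B_p(R)}Q^{s}\,\rv\ls\Big(2nK+\frac{8ns}{R^2}\Big)^{s}\rv(B_p(2R)),
\end{equation*}
and taking $s$-th roots yields \eqref{eq6.1}. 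I expect the only real obstacle to be the bookkeeping: one must pick the Young parameters so that the residual constants assemble exactly into $2nK+8ns/R^2$ rather than an unspecified $C(n,s,K,R)$; once the Bochner inequality is in place and the $|\nabla Q|^2$ terms are seen to cancel, the argument is routine.
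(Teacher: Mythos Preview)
Your approach is correct and essentially matches the paper's: apply the Bochner inequality to $v=\log u$ with test function $\psi^{2s}Q^{s-2}$, absorb the two cross terms into the good term $-(s-2)\int\psi^{2s}Q^{s-3}|\nabla Q|^2$ via Cauchy--Schwarz (this is exactly where $s\gs 2n+4$ is used), and then turn the remaining $Q^{s-1}$ integrals into a bound on $\|Q\|_{L^s}$.

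The one noteworthy difference is the last step. The paper does not use pointwise Young but applies H\"older directly: with $\tau=\big(\int_{B_{2R}}\psi^{2s}Q^s\big)^{1/s}$ one has $\int\psi^{2s}Q^{s-1}\ls\tau^{s-1}\big(\int\psi^{2s}\big)^{1/s}$ and $\int Q^{s-1}\psi^{2s-2}|\nabla\psi|^2\ls\tau^{s-1}\big(\int|\nabla\psi|^{2s}\big)^{1/s}$, so dividing by $\tau^{s-1}$ gives $\tau/n\ls(2K+8s/R^2)\,\rv(B_{2R})^{1/s}$ in one line with the exact stated constant. Your Young step is equivalent in principle---with the single optimal parameter $\varepsilon=\tfrac{s-1}{snM}$, $M=2K+8s/R^2$, it reproduces the H\"older bound exactly---but the separate choices $\varepsilon\sim(2nK)^{-1}$ and $\varepsilon\sim R^2/(8ns)$ you name would over-absorb (each alone already eats $\sim 1/n$ from the left side) and in any case leave residual factors like $\big(\tfrac{s-1}{s}\big)^{s-1}$ that do not assemble into $2nK+8ns/R^2$. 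The H\"older route is precisely what makes the bookkeeping you worry about disappear.
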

\begin{proof}Since $u>0$ in $B_p(2R)$,  setting $v=\log u$, then we have
$$\mathscr L_v=-|\nabla v|^2\cdot\rv=-Q\cdot\rv.$$

For simplicity, we  denote $B_p(2R)$ by $B_{2R}$.

Let $\psi(x)$ be a nonnegative Lipschitz function with support in $B_{2R}$.  By choosing test function $\psi^{2s}Q^{s-2}$
 and using the Bochner type formula (\ref{eq5.36}) to $v$ (with function $f(x,s)=-s$, which satisfies the condition (a) and (b)
  in Theorem \ref{bochner}), we get
\begin{align}\label{eq6.2}
-\int_{B_{2R}}&\ip{\nabla Q}{\nabla(\psi^{2s}Q^{s-2})}d\rv\\ \nonumber& \gs\frac{2}{n}\int_{B_{2R}}\psi^{2s}Q^sd\rv
-2\int_{B_{2R}}\psi^{2s}Q^{s-2}\ip{\nabla v}{\nabla Q}d\rv\\ \nonumber & \quad-2K\int_{B_{2R}}\psi^{2s}Q^{s-1}d\rv.
\end{align}
Hence we have
\begin{align}\label{eq6.3}
\frac{2}{n}&\int_{B_{2R}}\psi^{2s}Q^sd\rv  -2K\int_{B_{2R}}\psi^{2s}Q^{s-1}d\rv\\ \nonumber\ls&
-2s\int_{B_{2R}}\psi^{2s-1} Q^{s-2}\ip{\nabla Q}{\nabla\psi}d\rv\\ \nonumber & -(s-2)\int_{B_{2R}}\psi^{2s}Q^{s-3}|\nabla Q|^2d\rv
  +2 \int_{B_{2R}}\psi^{2s}Q^{s-2}\ip{\nabla v}{\nabla Q}d\rv\\ \nonumber=&s\cdot I_1-(s-2)\cdot I_2+I_3.
\end{align}
We now estimate $I_1,$ $I_2$ and $I_3.$ By Cauchy--Schwarz inequality, we have
$$I_1\ls \frac{1}{2}\int_{B_{2R}}\psi^{2s}Q^{s-3}|\nabla Q|^2d\rv+2\int_{B_{2R}}Q^{s-1}\psi^{2s-2}|\nabla \psi|^2d\rv.$$
and
$$I_3\ls n\int_{B_{2R}}\psi^{2s}Q^{s-3}|\nabla Q|^2d\rv+\frac{1}{n}\int_{B_{2R}}\psi^{2s}Q^{s}d\rv,$$
By combining  with (\ref{eq6.3}), we  obtain
\begin{align*}
\frac{1}{n}\int_{B_{2R}}&\psi^{2s}Q^sd\rv  -2K\int_{B_{2R}}\psi^{2s}Q^{s-1}d\rv\\&\ls \Big(\frac{s}{2}-(s-2)+n\Big) \cdot I_2 +2s\int_{B_{2R}}Q^{s-1}\psi^{2s-2}|\nabla \psi|^2d\rv.
\end{align*}
If we choose $s\gs 2n+4$, then we can drop the first term in RHS.

Set $$\tau=\Big(\int_{B_{2R}}\psi^{2s}Q^sd\rv\Big)^\frac{1}{s}.$$
Then by $K\gs0$ and  H\"older inequality, we have
$$\frac{\tau^s}{n}\ls 2K\Big(\int_{B_{2R}}\psi^{2s}d\rv\Big)^{1/s}\cdot\tau^{s-1}
+2s\Big(\int_{B_{2R}}|\nabla\psi|^{2s}d\rv\Big)^{1/s}\cdot\tau^{s-1} .$$
Therefore,  when we choose $\psi$ such that $\psi=1$ on $B_{R}$, $\psi=0$ outside $B_{2R}$ and $|\nabla\psi|\ls 2/R$,
 we get the desired estimate \eqref{eq6.1}.
\end{proof}

\begin{cor}
 Let $u$ be a positive harmonic function on an $n$--dimensional complete noncompact Alexandrov space with Ricci curvature
  $\gs -K$ and $K\gs0$. Then we have $$|\nabla\log u|\ls C_{n,K}.$$
\end{cor}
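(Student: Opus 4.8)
The plan is to combine the $L^s$-estimate from Lemma \ref{lem6.2} with a Nash--Moser iteration to pass from an integral bound to the pointwise bound, exactly in the spirit of the local estimate Theorem \ref{thm1.3}. First I would fix an arbitrary point $p$ and an arbitrary radius $R>0$; since $M$ is complete and noncompact, the ball $B_p(2R)$ is always available, and $u$ restricted to $B_p(2R)$ is a positive harmonic function. Writing $v=\log u$ and $Q=|\nabla v|^2$, Corollary \ref{corbochner} (applied to $\mathscr L_v=-Q\cdot\rv$, with $f(x,s)=-s$) gives $Q\in W^{1,2}_{loc}$, lower semi-continuity of $|\nabla u|$, and the differential inequality
\begin{equation*}
\mathscr L_Q\gs 2\Big(\tfrac{Q^2}{n}-\langle\nabla v,\nabla Q\rangle-KQ\Big)\cdot\rv
\end{equation*}
in the sense of measure on $B_p(2R)$. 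This is the same inequality underlying Lemma \ref{lem6.2}.

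Next I would run the Moser iteration. The key point is that the term $\langle\nabla v,\nabla Q\rangle$ is absorbed via Cauchy--Schwarz at the cost of constants depending only on $n$, and the $\tfrac{2}{n}Q^2$ term is what makes the iteration close — this is precisely why the improved Bochner formula (Theorem \ref{thm1.2}/\ref{bochner}) was needed rather than the weak $\Gamma_2$ inequality. Testing the inequality against $\varphi^2 Q^{q-1}$ for suitable cutoffs $\varphi$ supported in shrinking balls and exponents $q\to\infty$, and using the doubling property and the weak Poincaré/Sobolev inequality valid on bounded domains of Alexandrov spaces (as recorded in Section 2.2 and used already in Lemma \ref{lem5.4}), one obtains a reverse-Hölder / $L^\infty$--$L^{s_0}$ estimate of the form
\begin{equation*}
\sup_{B_p(R)}Q\ls C(n)\Big(K+\tfrac{1}{R^2}+\fint_{B_p(2R)}Q^{s_0}d\rv\Big)
\end{equation*}
for some fixed $s_0\gs 2n+4$; strictly speaking this is just Theorem \ref{thm1.3} applied on $B_p(2R)$, so I would simply invoke it. Feeding in the $L^{s_0}$-bound of Lemma \ref{lem6.2}, namely $\|Q\|_{L^{s_0}(B_p(R))}\ls (2nK+8ns_0/R^2)\,\rv(B_p(2R))^{1/s_0}$, the volume factors cancel against the averaging and we arrive at
\begin{equation*}
\max_{B_p(R)}Q\ls C(n)\Big(K+\tfrac{1}{R^2}\Big),
\end{equation*}
i.e. $|\nabla\log u|(x)\ls C(n)\big(\sqrt K+\tfrac1R\big)$ for every $x$ with $|px|\le R$.

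Finally I would let $R\to\infty$. Since the right-hand side tends to $\sqrt{C(n)K}=:C_{n,K}$ and $p,x$ were arbitrary (any fixed $x$ lies in $B_p(R)$ once $R>|px|$), we conclude $|\nabla\log u|\ls C_{n,K}$ everywhere on $M$, with $C_{n,K}$ depending only on $n$ and $K$. (When $K=0$ this forces $|\nabla\log u|\equiv 0$, recovering the Liouville theorem, though that is not needed here.)

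The main obstacle is making the Moser iteration rigorous in the Alexandrov setting: one must check that the Caccioppoli-type inequalities obtained by testing $\mathscr L_Q$ against $\varphi^2Q^{q-1}$ are legitimate, i.e. that $Q^{q}\in W^{1,2}_{loc}$ for all the exponents used and that the chain rule and product rule for Sobolev functions hold — but this is exactly the content already exploited in Lemma \ref{lem6.2} and Lemma \ref{lem5.4}, so in practice the corollary reduces to combining Theorem \ref{thm1.3} and Lemma \ref{lem6.2} and letting $R\to\infty$.
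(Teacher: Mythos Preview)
Your plan has a genuine gap when $K>0$. You write that Theorem~\ref{thm1.3} gives
\[
\max_{B_p(R)}Q\ \ls\ C(n)\Big(K+\frac{1}{R^2}\Big),
\]
with a constant depending only on $n$, and then let $R\to\infty$. But look at the statement of Theorem~\ref{thm1.3} (equivalently Theorem~\ref{thm6.4}): the constant there is $C=C\big(n,\sqrt{K}\,\mathrm{diam}(\Omega)\big)$. To apply it you must take $\Omega\supset B_p(2R)$, so $\sqrt{K}\,\mathrm{diam}(\Omega)\to\infty$ as $R\to\infty$, and the constant blows up. This dependence is not an artifact of sloppy bookkeeping: the Moser iteration uses the Sobolev inequality~\eqref{eq6.4}, whose constant $C_S$ and the doubling exponent $D$ both depend on $\sqrt{K}\,\mathrm{diam}(\Omega)$ (see the paragraphs before Theorem~\ref{thm6.4}). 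So ``simply invoke Theorem~\ref{thm1.3} and let $R\to\infty$'' does not close.

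The paper's argument bypasses Moser iteration entirely and extracts the global bound directly from Lemma~\ref{lem6.2} by a coupling trick: set $s=R^2$ in \eqref{eq6.1} to get
\[
\|Q\|_{L^{R^2}(B_p(R))}\ \ls\ (2nK+8n)\cdot\big(\rv(B_p(2R))\big)^{1/R^2}.
\]
Bishop--Gromov gives $\rv(B_p(2R))\ls C_1e^{C_2R}$ with $C_1,C_2$ depending only on $n,K$, so the volume factor satisfies $\big(\rv(B_p(2R))\big)^{1/R^2}\ls C_1^{1/R^2}e^{C_2/R}\to 1$. Letting $R\to\infty$, the left side dominates $\|Q\|_{L^\infty(M)}$ (for any set of positive measure where $Q>\lambda$, the $L^{R^2}$ norm eventually exceeds $\lambda$), and one concludes $\|Q\|_{L^\infty}\ls 2nK+8n$. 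The whole point is that Lemma~\ref{lem6.2} has \emph{no} dependence on a Sobolev constant---its proof only uses the Bochner inequality and Cauchy--Schwarz---so the only geometric input in passing to $R\to\infty$ is the exponential volume bound, which is killed by the exponent $1/R^2$.
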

\begin{proof}
Without loss of generality, we may assume $K>0$. From Lemma \ref{lem6.2} above, setting $s= R^2$ for $R$ large enough,  we have
$$\big\||\nabla \log u|^2\big\|_{L^{R^2}(B_p(R))}\ls \Big(2nK+
8n\Big)\cdot \Big(\rv\big(B_p(2R)\big)\Big)^{\frac{1}{R^2}}.$$
According to Bishop--Gromov volume comparison theorem (see \cite{ks07} or \cite{s06-2}), we have
$$\rv\big(B_p(2R)\big)\ls H^n\big(B_o(2R)\subset \mathbb M^n_{K/(n-1)}\big)\ls C_1e^{C_2R},$$
where constants both $C_1$ and $C_2$ depend  only on $n$ and $K$.
Combining above two inequalities, we get
$$\big\||\nabla \log u|^2\big\|_{L^{R^2}(B_p(R))}\ls C_{n,K}\cdot C_1^{1/R^2}e^{C_2/R}.$$
Letting $R\to\infty$, we obtain the desired result.
\end{proof}

In order to  get a local $L^\infty$ estimate of $|\nabla \log u|$, let us recall the local version of Sobolev inequality.

Let $D=D(\Omega)$ be a doubling constant on $\Omega$, i.e., we have
 $$\rv(B_p(2R))\ls 2^{D}\cdot\rv(B_p(R))$$
 for all balls $B_p(2R)\Subset \Omega$. According to Bishop--Gromov volume comparison, it is known that
  if $M$ has nonnegative Ricci curvature, the constant $D$ can be chosen $D=n$.
  For the general case, if $M$ has $Ric\gs -K$ for some $K\gs0$, then the constant can be chosen to depend on $n$ and $\sqrt{K}\cdot {\rm diam}(\Omega)$, where ${\rm diam}(\Omega)$ is the diameter of $\Omega$. Here and in the following,
   without loss of generality, we always assume that the doubling constant $D\gs3.$

Let $C_P=C_P(\Omega)$ be a (weak) Poincar\'e constant on $\Omega$, i.e., we have
$$\int_{B_p(R)}|\varphi-\varphi_{p,R}|^2d\rv\ls C_P\cdot R^2\cdot\int_{B_p(2R)}|\nabla \varphi|^2d\rv $$
 for all balls $B_p(2R)\Subset \Omega$ and $\varphi\in W^{1,2}(\Omega)$, where $\varphi_{p,R}=\fint_{B_p(R)}\varphi d\rv$.
By Bishop--Gromov volume comparison and Cheeger--Colding's segment inequality,
 it is known that if $M$ has nonnegative Ricci curvature,
the constant $C_P$ can be chosen to depend only on $n$. For the general case,
if $M$ has $Ric\gs-K$ for some $K\gs0$, then the constant can be chosen to depend on $n$ and $\sqrt{K}\cdot {\rm diam}(\Omega)$.

It is well known that the doubling property and a Poincar\'e inequality imply a Sobolev inequality in length spaces
 (see, for example \cite{sa02, s96,hk00}). Explicitly, there exists a constant $C_S=C_S(\Omega)$,
  which depends only on $D$ and $C_P$, such that
\begin{equation} \label{eq6.4}
\Big(\int_{B_p(R)}|\varphi|^{\frac{2D}{D-2}}d\rv\Big)^{\frac{D-2}{D}}
\ls C_S\cdot \frac{R^2}{\rv(B_p(R))^{2/D}}\cdot\int_{B_p(2R)}\big(|\nabla \varphi|^2+R^{-2}\cdot \varphi^2\big)d\rv
\end{equation}
for all balls $B_p(2R)\Subset\Omega$ and $\varphi\in W_0^{1,2}(\Omega)$.

Now by combining Lemma \ref{lem6.2} and  the standard Nash--Moser iteration  method, we can get the following local estimate.
\begin{thm}\label{thm6.4} Let $M$ be an $n$-dimensional Alexandrov space with $Ric\gs-K$, for some $K\gs0$.
 Suppose that   $\Omega\subset M$ is a bounded open domain. Then there exists a constant $C=C\big(n,\sqrt K{\rm diam}(\Omega)\big)$
  such that
$$\max_{x\in B_p(R/2)}|\nabla \log u|\ls C\cdot(\sqrt K+\frac{1}{R})$$for all positive harmonic function $u$ on $\Omega$ and $B_p(2R)\Subset\Omega$.

If $K=0, $ the constant $C=C(n)$.
\end{thm}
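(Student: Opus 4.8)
The plan is to combine the integral estimate from Lemma \ref{lem6.2} with a standard Nash--Moser iteration, which is made possible by the local Sobolev inequality \eqref{eq6.4}. Writing $v=\log u$ and $Q=|\nabla v|^2$, recall that $v$ satisfies $\mathscr{L}_v=-Q\cdot\rv$, and from Corollary \ref{corbochner} we know $Q\in W^{1,2}_{loc}$ and $Q$ is lower semi-continuous, with the differential inequality
\begin{equation*}
\mathscr{L}_Q\gs 2\Big(\tfrac1n Q^2-\ip{\nabla v}{\nabla Q}-KQ\Big)\cdot\rv
\end{equation*}
in the sense of measures. First I would absorb the drift term $\ip{\nabla v}{\nabla Q}$: testing against $\psi^2 Q^{\beta}$ with $\beta\gs1$ and a cutoff $\psi$, the term $\int\psi^2 Q^{\beta}\ip{\nabla v}{\nabla Q}$ is controlled by Cauchy--Schwarz using $|\nabla v|^2=Q$, against a small multiple of $\int\psi^2 Q^{\beta-1}|\nabla Q|^2$ plus a multiple of $\int \psi^2 Q^{\beta+1}$; the coefficient from the good quadratic term $\tfrac2n Q^2$ survives for $\beta$ large enough (this is exactly the computation giving the $L^s$ bound in Lemma \ref{lem6.2}). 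This yields a reverse-Poincar\'e / Caccioppoli inequality of the form
\begin{equation*}
\int_{B_{2R}}\psi^2|\nabla(Q^{p})|^2\,d\rv\ls C\Big(p^2\!\int_{B_{2R}}|\nabla\psi|^2 Q^{2p}\,d\rv+p^2(K+R^{-2})\!\int_{B_{2R}}\psi^2 Q^{2p}\,d\rv\Big)
\end{equation*}
for all exponents $p\gs p_0(n)$, where the extra $R^{-2}$ (or $K$) terms come from bounding the lower-order pieces $\int\psi^2 Q^{\beta}$ and $\int\psi^2 Q^{\beta+1/2}$ by $\int\psi^2 Q^{\beta+1}$ plus constants via Young's inequality.

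Next I would run the Moser iteration. Inserting $\varphi=\psi Q^{p}$ into the Sobolev inequality \eqref{eq6.4} with exponent $\chi=\frac{D}{D-2}$ (where $D$ is the doubling dimension, depending only on $n$ and $\sqrt K\,\mathrm{diam}(\Omega)$) and combining with the Caccioppoli inequality above gives, for a sequence of radii $R_i=\frac R2(1+2^{-i})$ and exponents $p_i=\chi^i p_0$,
\begin{equation*}
\Big(\fint_{B_{R_{i+1}}}Q^{2p_{i+1}}\Big)^{1/(2p_{i+1})}\ls \big(C\, b^i\, p_i^2\, R^{-2}\cdot R^2\big)^{1/(2p_i)}\Big(\fint_{B_{R_i}}Q^{2p_i}\Big)^{1/(2p_i)}
\end{equation*}
for some $b>1$ depending on $\chi$; here the $(K+R^{-2})$ factor should be tracked dimensionally by rescaling so that it contributes the homogeneous factor $(\sqrt K+R^{-1})$ at the end. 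Iterating and using $\sum \chi^{-i}<\infty$, $\sum i\chi^{-i}<\infty$, the products of constants converge and one obtains
\begin{equation*}
\max_{B_{R/2}}Q=\|Q\|_{L^\infty(B_{R/2})}\ls C(n,\sqrt K\,\mathrm{diam}(\Omega))\cdot\Big(\fint_{B_R}Q^{2p_0}\Big)^{1/(2p_0)};
\end{equation*}
the passage from the $L^\infty$ norm to a genuine pointwise maximum on $B_{R/2}$ is legitimate because $Q=|\nabla v|^2$ is lower semi-continuous (Corollary \ref{corbochner}), so its essential supremum on the slightly larger ball dominates its pointwise values on $B_{R/2}$. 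Finally, choosing $s=2p_0\gs 2n+4$ in Lemma \ref{lem6.2} bounds the right-hand side by $\big(2nK+\tfrac{8ns}{R^2}\big)^{1/2}\cdot\big(\rv(B_{2R})/\rv(B_R)\big)^{1/s}\ls C(n,\sqrt K\,\mathrm{diam}(\Omega))\cdot(\sqrt K+R^{-1})^2$, using the doubling property to handle the volume ratio. Taking square roots gives $\max_{B_{R/2}}|\nabla\log u|\ls C(\sqrt K+R^{-1})$, and when $K=0$ the doubling and Poincar\'e constants depend only on $n$, so $C=C(n)$.

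The main obstacle I anticipate is bookkeeping the constants through the iteration so that the final bound is genuinely of the scale-invariant form $\sqrt K+1/R$ rather than, say, $\sqrt{K}+1/R+\sqrt{K}/R^{?}$; this requires carefully tracking how the $K$-terms and the $R^{-2}$-terms enter the Caccioppoli inequality and verifying they combine into the single factor $(\sqrt K+R^{-1})^2$ after the $L^s$-estimate of Lemma \ref{lem6.2} is plugged in (note that estimate already has exactly this homogeneity, with $2nK+8ns/R^2\sim (\sqrt K+\sqrt s/R)^2$). A secondary technical point is that all the manipulations — integration by parts, chain rule for $Q^p$, testing the measure inequality $\mathscr{L}_Q\gs(\cdots)\rv$ against nonnegative Lipschitz functions — are justified since $Q\in W^{1,2}_{loc}(\Omega)\cap L^\infty_{loc}$ (the local boundedness of $Q$ following a posteriori, or alternatively one truncates $Q$ at level $\ell$ and lets $\ell\to\infty$ at the end), and the Sobolev inequality \eqref{eq6.4} together with the doubling/Poincar\'e structure recalled before Theorem \ref{thm6.4} holds on Alexandrov spaces with $Ric\gs-K$ by Bishop--Gromov comparison and the segment inequality.
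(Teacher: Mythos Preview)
Your proposal is correct and follows essentially the same route as the paper: test the Bochner inequality for $Q=|\nabla\log u|^2$ against $\psi^2 Q^{s-1}$, absorb the drift term via Cauchy--Schwarz (in the paper's version the $\tfrac{2}{n}Q^{s+1}$ term is \emph{exactly} cancelled rather than surviving, leaving the pure Caccioppoli inequality $(s-2-\tfrac n2)\int\psi^2 Q^{s-2}|\nabla Q|^2\ls\int Q^s|\nabla\psi|^2+2K\int\psi^2 Q^s$), plug into the Sobolev inequality \eqref{eq6.4}, iterate on dyadic radii, and seed the iteration with the $L^s$ bound from Lemma~\ref{lem6.2}. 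Apart from this minor point and a stray exponent $1/2$ in your final bookkeeping (Lemma~\ref{lem6.2} bounds $\|Q\|_{L^s}$, not $\|Q\|_{L^s}^{1/2}$), the argument matches the paper's.
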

\begin{proof}
Let $v$ and $Q$ be as in the  above Lemma \ref{lem6.2}. We choose test function $\psi^2 Q^{s-1},$
where $\psi$ has support in ball $B_R:=B_p(R)$. By using the Bochner type formula (\ref{eq5.36}) to function $v$ (with function $f(x,s)=-s$), we have
\begin{align}\label{eq6.5}
\frac{2}{n}\int_{B_R} \psi^2Q^{s+1}d\rv\ls &2\int_{B_R}\psi^2Q^{s-1}\ip{\nabla v}{\nabla Q}d\rv
-2\int_{B_R}\psi Q^{s-1}\ip{\nabla \psi}{\nabla Q}d\rv\\ \nonumber&-(s-1)\int_{B_R} \psi^2Q^{s-2}|\nabla Q|^2d\rv
+2K\int_{B_R}\psi^2Q^sd\rv.
\end{align}
Note that
$$2\int_{B_R}\psi^2Q^{s-1}\ip{\nabla v}{\nabla Q}d\rv\ls \frac{n}{2}\int_{B_R}\psi^2Q^{s-2}|\nabla Q|^2d\rv
+\frac{2}{n}\int_{B_R}\psi^2Q^{s}|\nabla v|^2d\rv$$
and
$$-2\int_{B_R}\psi Q^{s-1}\ip{\nabla \psi}{\nabla Q}d\rv\ls \int_{B_R}\psi^2Q^{s-2}|\nabla Q|^2d\rv+\int_{B_R} Q^{s}|\nabla \psi|^2d\rv.$$
By combining with \eqref{eq6.5}, we get
\begin{equation}\label{eq6.6}
(s-2-\frac{n}{2})\int_{B_R}\psi^2Q^{s-2}|\nabla Q|^2d\rv\ls \int_{B_R} Q^{s}|\nabla \psi|^2d\rv+2K\int_{B_R}\psi^2Q^sd\rv.
\end{equation}
Taking $s\gs 2n+4$, then $s-2-n/2\gs s/2$. Let $\frac{R}{2}\ls \rho<\rho'\ls R$.  Choose $\psi$ such that $\psi(x)=1$ if $x\in B_p(\rho)$, $\psi(x)=0$ if $x\not\in B_p(\rho')$ and $|\nabla \psi|\ls 2/(\rho'-\rho)$, Then by (\ref{eq6.4}) and \eqref{eq6.6}, we have $$\Big(\int_{B_p(\rho)}Q^{s\theta}d\rv\Big)^{1/\theta}\ls \Big(\mathscr A\cdot(2sK+\frac{1}{R^2}+\frac{8s}{(\rho'-\rho)^2})\Big)\cdot\int_{B_p(\rho')}Q^sd\rv,$$
where $\theta=D/(D-2)$ and
\begin{equation}\label{eq6.7}
\mathscr A=C_S\cdot\frac{R^2}{\rv(B_p(R))^{2/D}}.
\end{equation}
Let $l_0$ be an integer such that $\theta^{l_0}\gs 2n+4$.   Taking $s_l=\theta^l$, $\rho_l=R(1/2+1/2^l)$ with $l\gs l_0$,
 we have
 $$\log J_{l+1}-\log J_l\ls \frac{1}{\theta^l}\cdot \log\Big(\mathscr A\cdot(2\theta^lK+\frac{1}{R^2}+\frac{2\cdot\theta^l\cdot 4^{l+2}}{R^2})\Big),$$
where
 $$J_l=\Big(\int_{B_p(\rho_l)}Q^{s_l}d\rv\Big)^{1/s_l}=\|Q\|_{L^{\theta^l}(B_p(\rho_l))}.$$
Hence, we have \begin{align*}\log J_\infty-\log J_{l_0}&\ls \log \mathscr A\cdot\sum_{l=l_0}^\infty\theta^{-l}+\sum_{l=l_0}^\infty\theta^{-l}\cdot\log(2\theta^lK+\frac{33(4\theta)^l}{R^2})\\ \nonumber&\ls \theta^{-l_0}\cdot \log \mathscr A^{D/2}+\sum_{l=l_0}^\infty\theta^{-l}\cdot \big(l\cdot\log(4\theta)+\log(K+\frac{33}{R^2})\big).
\end{align*}
On the other hand, by Lemma \ref{lem6.2}, we have
$$\log J_{l_0}\ls \log(2nK+\frac{8n\theta^{l_0}}{R^2})+\theta^{-l_0}\log \rv(B_p(2R)).$$
Hence, we obtain
\begin{align}\label{eq6.8}
\log J_\infty&\ls\log(2nK+\frac{8n\theta^{l_0}}{R^2})+\theta^{-l_0}\Big(\log \rv(B_p(2R))+  \log \mathscr A^{D/2}\Big)\\
\nonumber&\quad +\log(4\theta)\cdot\sum_{l=l_0}^\infty l\cdot\theta^{-l}+\log(K+\frac{33}{R^2})\sum_{l=l_0}^\infty\theta^{-l}.
\end{align}
From (\ref{eq6.7}) and (\ref{eq6.8}), we have
\begin{align*}\log J_\infty&\ls\log(2nK+\frac{8n\theta^{l_0}}{R^2})+\frac{D}{2}\theta^{-l_0}\log\Big(4C_SR^2\Big)\\
\nonumber&\quad +\log(4\theta)\cdot\sum_{l=l_0}^\infty l\cdot\theta^{-l}+\log(K+\frac{33}{R^2})\sum_{l=l_0}^\infty\theta^{-l}\\
&\ls \log(2nK+\frac{8n\theta^{l_0}}{R^2})+\frac{D}{2}\theta^{-l_0}\log\Big(4C_S(KR^2+33)\Big)+C(\theta,l_0).
\end{align*}
Taking $l_0$ such that $\theta^{l_0}\ls 8n$, we get
$$\log J_\infty\ls \log(2nK+\frac{64n^2}{R^2})+C(n, \sqrt{K}{\rm diam}(\Omega)).$$
This gives the desired result.
\end{proof}

The gradient estimate shows that any sublinear
growth harmonic function on an Alexandrdov space with nonnegative Ricci curvture must be a constant. Explicitly, we have the following.
\begin{cor}Let $M$ be an $n$-dimensional complete non-compact Alexandrov space with nonnegative Ricci curvature.
 Assume that $u$ is harmonic function on $M$. If $$\lim_{r\to\infty}\frac{\sup_{x\in B_p(r)}|u(x)|}{r}=0$$ for some
  $p\in M$, then $u$ is a constant.
\end{cor}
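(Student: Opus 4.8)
The plan is to apply the interior gradient estimate of Theorem \ref{thm6.4} (equivalently the one proved in this section) on a sequence of balls of radius $R\to\infty$, together with the sublinear growth hypothesis. The point is that on an Alexandrov space with nonnegative Ricci curvature the constant $C$ in the estimate depends only on $n$ (the case $K=0$), so it does not deteriorate as the ball grows.

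First I would fix a point $p\in M$ and an arbitrary point $q\in M$; since $M$ is complete and noncompact, for every $R>0$ the ball $B_q(2R)$ is a bounded open domain of $M$ and $u$ is a positive harmonic function on it \emph{after adding a constant}. Here is the one small wrinkle: $u$ need not be positive, but $|\nabla\log(u+c)| = |\nabla u|/(u+c)$, and the gradient estimate is really a statement about $|\nabla u|$. Concretely, on $B_q(2R)$ set $c_R := \sup_{x\in B_q(2R)}|u(x)| + 1$, so that $v_R := u + c_R$ is a positive harmonic function on $B_q(2R)$. Applying Theorem \ref{thm6.4} with $K=0$ gives
\begin{equation*}
|\nabla\log v_R|(q) \ls \frac{C(n)}{R},
\end{equation*}
that is, $|\nabla u|(q) \ls \dfrac{C(n)\,(u(q)+c_R)}{R} \ls \dfrac{C(n)\,\big(2\sup_{B_q(2R)}|u| + 1\big)}{R}$.

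Next I would bound $\sup_{B_q(2R)}|u|$ in terms of $\sup_{B_p(r)}|u|$: since $|pq|$ is a fixed finite number, $B_q(2R)\subset B_p(2R+|pq|)$, so
\begin{equation*}
|\nabla u|(q) \ls \frac{C(n)\big(2\sup_{x\in B_p(2R+|pq|)}|u(x)| + 1\big)}{R}.
\end{equation*}
Now let $R\to\infty$. Writing $r = 2R+|pq|$, the hypothesis $\lim_{r\to\infty} r^{-1}\sup_{B_p(r)}|u| = 0$ forces $\sup_{B_p(2R+|pq|)}|u| = o(R)$, and the $+1$ term is killed by the $1/R$ factor, so $|\nabla u|(q) = 0$. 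Since $q$ was arbitrary and $u$ is locally Lipschitz (Corollary \ref{cor5.5}), $|\nabla u|\equiv 0$ on $M$; as $M$ is connected this gives that $u$ is constant.

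I do not expect a serious obstacle here — the argument is the standard Liouville-type deduction from a scale-invariant gradient estimate. The only points requiring a little care are the reduction from ``harmonic'' to ``positive harmonic'' by adding the constant $c_R$ (and checking the estimate is genuinely an estimate on $|\nabla u|$, unaffected by this shift), and making sure one quantifies $\sup_{B_q(2R)}|u|$ through a ball centered at the fixed point $p$ so that the growth hypothesis can be invoked. Both are routine; the heavy lifting was already done in Theorem \ref{thm6.4}, whose constant's independence of $K$ when $K=0$ is exactly what makes the limit work.
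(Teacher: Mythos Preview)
Your proof is correct and follows essentially the same approach as the paper's: shift $u$ by a constant depending on $R$ to obtain a positive harmonic function on a large ball, apply the gradient estimate of Theorem \ref{thm6.4} with $K=0$ (so $C=C(n)$ is scale-free), and let $R\to\infty$ using the sublinear growth. The paper uses $2\overline{u_r}-u$ instead of your $u+c_R$, and observes directly that the hypothesis is center-independent rather than using your ball inclusion $B_q(2R)\subset B_p(2R+|pq|)$, but these are cosmetic differences.
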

\begin{proof} Clearly, for any $q\in M$, we still have  $$\lim_{r\to\infty}\frac{\sup_{x\in B_q(r)}|u(x)|}{r}=0.$$
Let $\overline{u_r}=\sup_{x\in B_q(r)}|u(x)|$. Then $2\overline{u_r}-u$ is a positive harmonic on $B_q(r)$,
 unless $u$ is identically zero. By Theorem \ref{thm6.4}, we have
 $$|\nabla u(q)|\ls C(n)\frac{\sup_{x\in B_q(r)}(2\overline{u_r}-u)}{r}\ls C(n)\frac{3\overline{u_r}}{r}.$$
 Letting $r\to\infty$, we get $|\nabla u(q)|=0$. This completes the proof.
\end{proof}

As another application of the gradient estimate, we have the following  mean value property, by using
 Cheeger--Colding--Minicozzi's argument in \cite{ccm95}. In smooth case, it was first proved by Peter Li in \cite{li86}
  via a parabolic method.
\begin{cor}
Let $M$ be an $n$-dimensional complete non-compact Alexandrov space with nonnegative Ricci curvature. Suppose that $u$ is
 a bounded superharmonic function on $M$. Then $$\lim_{r\to\infty}\fint_{\partial B_p(r)}ud\rv=\inf u.$$
\end{cor}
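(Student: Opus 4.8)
The plan is to follow the Cheeger--Colding--Minicozzi approach in \cite{ccm95}, adapting it to the Alexandrov setting using the tools developed in this paper. First I would reduce to the case where $u$ is \emph{harmonic}, not merely superharmonic. Indeed, if $u$ is a bounded superharmonic function, then by the Perron method one can produce a bounded harmonic function $\bar u \ls u$ on $M$ (exhaust $M$ by balls $B_p(R_j)$, solve the Dirichlet problem with boundary value $u$, and pass to the limit using the Harnack inequality for nonnegative harmonic functions together with the uniform bound; monotonicity of the solutions in $R_j$ gives convergence). By Bochner--Yau gradient estimate Theorem \ref{thm6.4} (with $K=0$), any bounded harmonic function on a complete noncompact $M$ with nonnegative Ricci curvature has $|\nabla \bar u| \ls C(n)\,(\sup|\bar u|)/R$ on $B_p(R/2)$; letting $R\to\infty$ forces $\bar u$ to be constant, hence $\bar u \equiv \inf u$ (one checks $\inf \bar u = \inf u$ since $\bar u$ touches $u$ in the limit, or more carefully, $\inf u \ls \bar u \ls u$ and $\bar u$ constant gives $\bar u = \inf u$). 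Thus it suffices to prove $\lim_{r\to\infty}\fint_{\partial B_p(r)} u\,d\rv = \inf u$ knowing $u - \inf u \gs 0$ is superharmonic with $u - \bar u \gs 0$ superharmonic tending (in an averaged sense) to $0$.

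Next, replacing $u$ by $u - \inf u$, I may assume $u \gs 0$, superharmonic, bounded, with $\inf u = 0$; I must show $\fint_{\partial B_p(r)} u \, d\rv \to 0$. The key monotonicity is that for a nonnegative superharmonic $u$, the spherical average $\mathcal A(r) := \frac{1}{\rv(\Sigma_p) s_0^{n-1}(r)}\int_{\partial B_p(r)} u\,d\rv = \frac{1}{\rv(\Sigma_p)\,r^{n-1}}\int_{\partial B_p(r)} u\,d\rv$ is non-increasing in $r$ (this follows from the mean value inequality Lemma \ref{lem4.7}, or rather its supersolution analogue via Proposition \ref{prop4.4} with $f=0$: the function $\varrho(R) \equiv 0$, so $\mathcal A$ is dominated by $u(p)$ uniformly, and a standard coarea/Green-function computation as in the proof of Proposition \ref{prop4.4} shows the monotonicity of $\mathcal A(r)$ itself). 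Being non-increasing and bounded below by $0$, $\mathcal A(r)$ has a limit $\ell \gs 0$ as $r\to\infty$; equivalently $\fint_{\partial B_p(r)} u \, d\rv \to \ell$ since $r^{n-1}/\rv(\partial B_p(r))$ is controlled by Bishop--Gromov and the measure of $\partial B_p(r)$ is comparable to $r^{n-1}\rv(\Sigma_p)$ in the averaged sense appearing in this paper. I then need $\ell = 0$.

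To show $\ell = 0$, I would argue by contradiction: if $\ell > 0$, then $\mathcal A(r) \gs \ell$ for all $r$, so $\fint_{B_p(r)} u\,d\rv \gs c\,\ell$ for all large $r$ (integrating the spherical averages), i.e. $u$ has a definite positive average at every scale. On the other hand, the harmonic function $\bar u$ constructed above equals $\inf u = 0$. Comparing $u$ with $\bar u$: the function $u - \bar u = u \gs 0$ is superharmonic and by construction is the decreasing limit of $u - u_{R_j}$ where $u_{R_j}$ solves the Dirichlet problem in $B_p(R_j)$ with boundary value $u$. Since $u_{R_j}(p) \to \bar u(p) = 0$ and $u_{R_j}$ is harmonic, Lemma \ref{lem4.7} forces $\fint_{\partial B_p(r)} u_{R_j}\,d\rv$ to be small for $r \ll R_j$; but $u_{R_j} = u$ on $\partial B_p(R_j)$ and $u - u_{R_j}$ vanishes on the boundary and is superharmonic, so by the maximum principle $u \gs u_{R_j}$ — this goes the wrong way. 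The correct route is: $u_{R_j} \gs $ (harmonic function with boundary data $\inf_{\partial B_p(R_j)} u$), and more usefully, by the weak Harnack inequality for the nonnegative superharmonic function $u$ applied on an annulus, $\inf_{\partial B_p(r)} u \gs c \, \fint_{B_p(2r)\setminus B_p(r)} u \, d\rv \gs c'\ell$; so $u \gs c' \ell$ on arbitrarily large spheres, hence (by superharmonicity and maximum principle on balls) $u \gs c'\ell$ everywhere, contradicting $\inf u = 0$. I expect the main obstacle to be making the monotonicity of the spherical average $\mathcal A(r)$ rigorous for supersolutions in the Alexandrov setting — this requires carefully combining the coarea formula, the Green-function comparison of Proposition \ref{prop4.4}, and the fact that $\rv(\partial B_p(r))$ and $r^{n-1}\rv(\Sigma_p)$ agree in the averaged sense (Lemma $2.3'$ and its proof), rather than pointwise in $r$; the Harnack-type step and the Perron construction are comparatively standard given the maximum principle (Corollary \ref{cor4.10}) and the Harnack inequality implied by the doubling property and Poincaré inequality available here.
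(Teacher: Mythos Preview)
Your first paragraph is a detour that does not reduce the problem. If $u$ were bounded harmonic it would be constant by Liouville (Theorem \ref{thm6.4} with $K=0$), and the statement would be vacuous; the content of the corollary is precisely the superharmonic case. The Perron limit $\bar u$ you build equals $\inf u$, but you never use this to simplify the task --- at the end of the paragraph you are left with exactly the original statement.

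The paper's argument is shorter and uses the harmonic replacement at \emph{each fixed scale} rather than in the limit. Assume $\inf u=0$ and fix $\epsilon>0$. For $R$ large, let $h_R$ be harmonic on $B_p(R)$ with $h_R=u$ on $\partial B_p(R)$; then $0\ls h_R\ls u$ by the maximum principle, so $\inf_{B_p(R/2)}h_R<\epsilon$. Harnack (which \emph{is} the gradient estimate Theorem \ref{thm6.4}) gives $\sup_{B_p(R/2)}h_R\ls C(n)\epsilon$. Now the monotonicity of $r^{1-n}\int_{\partial B_p(r)}h_R$ --- established cleanly in the proof of Proposition \ref{prop4.4} for \emph{harmonic} functions, with $f=0$ --- yields $\int_{\partial B_p(R)}u=\int_{\partial B_p(R)}h_R\ls 2^{n-1}\int_{\partial B_p(R/2)}h_R\ls C(n)\epsilon\cdot\rv(\partial B_p(R/2))$, and Bishop--Gromov finishes.

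Your main argument (monotonicity of $\mathcal A(r)$ for the supersolution $u$ itself, followed by weak Harnack to force $\ell=0$) is a legitimate alternative, but it trades the paper's single clean tool (Harnack for the harmonic $h_R$) for two harder ones: you must establish the spherical-average monotonicity directly for supersolutions, which Proposition \ref{prop4.4} does not do (Case 2 there immediately passes to the harmonic replacement), and you must invoke a weak Harnack inequality for supersolutions. Both are available in the doubling $+$ Poincar\'e framework, but the paper's route sidesteps them entirely by working with $h_R$, and in doing so showcases the gradient estimate that is the point of the section.
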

\begin{proof}Without loss of generality, we can assume that $\inf u=0$.

Fix any $\epsilon>0,$ Then there exists some $R(\epsilon)$ such that $\inf_{B_p(R)}u<\epsilon$  for all $R>R(\epsilon).$
For any $R>4R(\epsilon), $ we consider the harmonic function $h_R$ on $B_p(R)$ with boundary value $h_R=u$ on $\partial B_p(R)$.
 By maximum principle and the gradient estimate of $h_R$, we have $$\sup_{B_p(R/2)}h_R\ls C(n)\cdot \inf_{B_p(R/2)}h_R<C(n)\cdot\epsilon.$$

On the other hand, from the monotonicity of $r^{1-n}\cdot\int_{\partial B_p(r)}h_Rd\rv$ on $(0,R)$,
(see the proof of Proposition \ref{prop4.4}), we have $$\int_{\partial B_p(R)}h_Rd\rv\ls C(n) \int_{\partial B_p(R/2)}h_Rd\rv.$$
Then we get $$\int_{\partial B_p(R)}ud\rv=\int_{\partial B_p(R)}h_Rd\rv\ls C(n)\cdot\epsilon\cdot \rv(\partial B_p(R/2)).$$
Therefore, the desired result   follows from Bishop--Gromov volume comparison and the arbitrariness of $\epsilon.$
\end{proof}

\end{document}